\newcommand{\ab}[1]{\boldsymbol{#1}}
\def\bfm#1{\boldsymbol{#1}}
\newcommand{\f}[1]{\mathbf{#1}}
\newcommand{\bb}[1]{\bfm{#1}}
\newcommand{\N}{\mathbb N}
\newcommand{\R}{\mathbb R}
\newcommand{\s}{\scriptstyle}
\newtheorem{thm}{Theorem}
\newtheorem{lem}[thm]{Lemma}
\newtheorem{obs}[thm]{Observation}
\theoremstyle{definition}
\newtheorem{ex}[thm]{Example}
\newtheorem{defn}[thm]{Definition}
\newtheorem{rem}[thm]{Remark}
\newproof{pf}{proof}
\begin{document}

\begin{frontmatter}

\title{Space of $C^2$-smooth geometrically continuous isogeometric functions on planar multi-patch geometries: Dimension and numerical experiments}

%% use optional labels to link authors explicitly to addresses:

\author[pavia,lnz]{Mario Kapl\corref{cor}}
\ead{mario.kapl@ricam.oeaw.ac.at}
 
\author[slo1,slo2]{Vito Vitrih}
\ead{vito.vitrih@upr.si}

\address[pavia]{Dipartimento di Matematica ``F.Casorati", Universit\`{a} degli Studi di Pavia, Italy}

\address[lnz]{Johann Radon Institute for Computational and Applied Mathematics, \\Austrian Academy of Sciences, Linz, Austria}

\address[slo1]{IAM and FAMNIT, University of Primorska, Koper, Slovenia}

\address[slo2]{Institute of Mathematics, Physics and Mechanics, Ljubljana, Slovenia}

\cortext[cor]{Corresponding author}

\begin{abstract}
We study the space of $C^{2}$-smooth isogeometric functions on bilinearly parameterized multi-patch domains~$\Omega \subset \R^{2}$, where the graph of each 
isogeometric function is a multi-patch spline surface of bidegree~$(d,d)$, $d \in \{5,6 \}$. The space is fully characterized by the equivalence of the 
$C^2$-smoothness of an isogeometric function and the $G^2$-smoothness 
of its graph surface (cf. \cite{Pe15, KaViJu15}). 
This is the reason to call its functions $C^{2}$-smooth geometrically continuous isogeometric functions.   

In particular, the dimension of this $C^{2}$-smooth isogeometric space is investigated. The study is based on the decomposition of the space into 
three subspaces and is an extension of the work~\cite{KaplVitrih2016} to the multi-patch case. In addition, we present an algorithm for the 
construction of a basis, and use the resulting globally $C^{2}$-smooth functions for numerical experiments, such as performing $L^{2}$ approximation and solving 
triharmonic equation, on bilinear multi-patch domains. The numerical results indicate optimal approximation order.
\end{abstract}

\begin{keyword}
%% keywords here, in the form: keyword \sep keyword
isogeometric analysis, geometric continuity, geometrically continuous
isogeometric functions, triharmonic equation, second order continuity, multi-patch
%% MSC codes here, in the form: \MSC code \sep code
%% or \MSC[2008] code \sep code (2000 is the default)
\MSC 65D17 \sep 65N30 \sep 68U07
\end{keyword}

\end{frontmatter}

%%%%%%%%%%%%%%%%%%%%%%%%%%%%%%%%%%%%%%%%%%%%%%%%%%%%%%%%%%%
\section{Introduction} \label{sec:introduction}
%%%%%%%%%%%%%%%%%%%%%%%%%%%%%%%%%%%%%%%%%%%%%%%%%%%%%%%%%%%

The concept of isogeometric analysis \cite{ANU:9260759, CottrellBook, HuCoBa04} can be employed to solve high order PDEs (i.e., partial differential equations), see e.g. 
\cite{Dede2012, GoBaHu08, KiBaHsWuBl10, KiBlLiWu09, LiDeEvBoHu13} for $4$-th order PDEs and \cite{BaDe15, Gomez2012, TaDe14} for $6$-th order PDEs. 
The main idea is to use the same spline function space to represent the physical domain and to describe the solution space. Thereby, the high order PDEs 
are solved via their weak forms using standard Galerkin discretization, see e.g.~\cite{BaDe15, TaDe14}. 
This requires in general spline function spaces of $C^{1}$-smoothness or even higher. 

The investigation of such smooth isogeometric spline spaces and especially their basis construction are non-trivial tasks for multi-patch domains with possibly 
extraordinary vertices, i.e., vertices with a valency~$\nu \neq 4$. The concept of geometric continuity (cf. \cite{Pe02}) provides a framework to characterize these 
spaces. For a non-negative integer $s$, an isogeometric function is $C^{s}$-smooth on the given multi-patch domain if and only if its graph surface over the multi-patch 
domain is $G^{s}$-smooth (i.e., geometric continuous of order $s$), see \cite{Pe15, KaViJu15}. Therefore, $C^s$-smooth isogeometric functions have been 
called $C^s$-smooth geometrically continuous isogeometric functions~\cite{KaBuBeJu16, KaplVitrih2016, KaViJu15}. 

The study of $C^{1}$-smooth isogeometric spline spaces over multi-patch domains has been started in the last three years. Mainly, 
two approaches are considered depending on the used parameterization of the physical domain. While in the first approach (e.g. \cite{Pe15-2,Peters2,NgKaPe15,NgPe16}) 
the domain parameterizations are $C^{1}$-smooth along the patch interfaces except in the vicinity of extraordinary vertices, in the second approach 
(e.g. \cite{BeMa14,CST15,KaBuBeJu16,KaViJu15, mourrain2015geometrically}) the domain parameterizations belong to a special class of parameterizations, called analysis 
suitable $G^{1}$ multi-patch parameterizations in~\cite{CST15}. In contrast to the first approach, these parameterizations require only $C^{0}$-smoothness along the patch 
interfaces. Amongst others, the class of analysis suitable $G^{1}$ multi-patch parameterizations contains the class of bilinearly parameterized multi-patch domains, 
which has been considered first (cf.~\cite{BeMa14,KaViJu15}). For both concepts numerical results indicate that the resulting $C^{1}$-smooth isogeometric spaces possess 
optimal approximation properties (cf. \cite{CST15,KaBuBeJu16,KaViJu15,NgKaPe15}). In the case of the second approach this is confirmed by theoretical 
investigations~\cite{CST15}.

In the present paper we are interested in the case of $C^{2}$-smooth geometrically continuous isogeometric functions on multi-patch domains. For this we will follow a similar approach as the 
second one above. More precisely, the present paper extends the work~\cite{KaplVitrih2016} for bilinearly parameterized two-patch domains to the case of bilinearly 
parameterized multi-patch domains. We investigate the dimension of the spaces of \emph{$C^2$}-smooth biquintic and bisixtic isogeometric functions on bilinearly 
parameterized multi-patch domains by analyzing all possible configurations of such multi-patch domains. This is done by decomposing the $C^{2}$-smooth isogeometric 
spline space into the direct sum of three subspaces, which are called patch space, edge space and vertex space. Whereas the computation of the dimension of the first two subspaces can 
be seen as a generalization of results in~\cite{KaplVitrih2016} for the two-patch case, the computation of the dimension of the vertex space for all possible 
configurations of bilinear multi-patch domains is a non-trivial task. This can be expected from the results for triangulated domains, cf.~\cite{LaSch07}. 
Our obtained results cover all possible configurations of bilinear multi-patch domains, which is in contrast to the results for the $C^{1}$ case in~\cite{KaBuBeJu16}, 
where only the so-called generic configuration was considered. 
In addition, an algorithm for the construction of a basis for the space of $C^{2}$-smooth geometrically 
continuous isogeometric functions is presented, which is based on the concept of minimal determining sets (cf.~\cite{BeMa14,LaSch07}) for the involved spline 
coefficients. Numerical results indicate that the resulting basis functions are well conditioned.   

A different method similar to the first approach above is described in~\cite{ToSpHiHu16}, where a polar spline configuration is used. 
The idea is to consider a special basis construction in the neighborhood of the extraordinary vertex to achieve the desired smoothness also there.

$C^{2}$-smooth geometrically continuous isogeometric functions are required for solving $6$-th order PDEs over multi-patch domains by means of isogeometric analyis. Two 
relevant examples of $6$-th order PDEs are the triharmonic equation~\cite{BaDe15, KaplVitrih2016, TaDe14} and the Phase-field crystal equation~\cite{Gomez2012}. 
In this paper, we will present examples of solving the triharmonic equation on different bilinear multi-patch domains using our constructed $C^{2}$-smooth geometrically 
continuous isogeometric functions. Furthermore, the numerical results, which are obtained by performing $L^{2}$ approximation on different multi-patch domains, indicate 
analogous to the $C^{1}$ case~\cite{KaBuBeJu16,KaViJu15} and to the $C^{2}$ two-patch case~\cite{KaplVitrih2016} optimal approximation order of the 
considered isogeometric spline spaces. 

The paper is organized as follows. Section~\ref{sec:Geom_IgA_Bilinear} describes the used class of bilinearly parameterized multi-patch domains $\Omega \subset \R^2$, 
and introduce the space of biquintic or bisixtic $C^2$-smooth geometrically continuous isogeometric functions on a given domain $\Omega$. In 
Section~\ref{sec:decompsition} and \ref{sec:dimension}, we study the dimension of this space. For this purpose, we decompose the space of $C^2$-smooth geometrically 
continuous isogeometric functions into the direct sum of three subspaces, which are called patch space, edge space and vertex space, and compute for 
each of these subspaces the dimension by analyzing all possible configurations. Section~\ref{sec:Examples} describes an algorithm for the construction of a basis and presents numerical experiments. 
More precisely, $C^2$-smooth biquintic and bisixtic geometrically isogeometric functions are used for performing $L^2$-approximation to show 
experimentally that the estimated convergence rates are optimal. Furthermore, these functions are used to solve the triharmonic equation over 
different bilinear multi-patch domains. Finally, we conclude the paper.

%%%%%%%%%%%%%%%%%%%%%%%%%%%%%%%%%%%%%%%%%%%%%%%%%%%%%%%%%%%%%%%%%%%%%%%%%%%%%%%%%%%%%%%%%%%%
\section{$C^2$-smooth geometrically continuous isogeometric functions on bilinear multi-patch geometries} \label{sec:Geom_IgA_Bilinear}
%%%%%%%%%%%%%%%%%%%%%%%%%%%%%%%%%%%%%%%%%%%%%%%%%%%%%%%%%%%%%%%%%%%%

%%%%%%%%%%%%%%%%%%%%%%%%%%%%%%%%%%%%%%%%%%%%%%%%%%%%%%%%%%%%%%%%%%%%%%%%%%%%%%%%%%%%%%%%%%%%
\subsection{Bilinearly parameterized multi-patch domains} \label{sec:settings}
%%%%%%%%%%%%%%%%%%%%%%%%%%%%%%%%%%%%%%%%%%%%%%%%%%%%%%%%%%%%%%%%%%%%%%%%%%%%%%%%%%%%%%%%%%%%

Suppose that our computational domain $\Omega \subset \R^2$ consists of $P \in \N, P\geq 2$, mutually disjoint strictly convex quadrangular patches~$\Omega^{(\ell)}$, $\ell=1,2,\ldots, P$, $E$ non-boundary common edges $\Gamma^{(j)}$, $j=1,2,\ldots,E$, and $V$ (inner and boundary) vertices 
\begin{equation}  \label{eq:shape_points}
\ab{v}^{(r)} :=(p_r,q_r)^T
\end{equation}
of valency $\nu_{r} \geq 3$, $r=1,2,\ldots,V$. Note that a boundary vertex of valency two, i.e., a vertex contained in only one 
patch~$\Omega^{(\ell)}$, is never meant by a vertex $\ab{v}^{(r)}$ in this paper. We assume that any two patches have an empty intersection, exactly one common vertex or share 
the whole 
common edge. Moreover, the deletion of any vertex does not split $\Omega$ into subdomains, whose union would be unconnected. We additionally assume that all patches
$\Omega^{(\ell)}$ are images of bijective and regular \emph{bilinear} geometry
mappings~$\ab{G}^{(\ell)}$ 
\begin{align*}
 \ab{G}^{(\ell)}: [0,1]^{2}  \rightarrow \R^{2}, \quad 
 \bb{\xi}^{(\ell)} =(\xi^{(\ell)}_1,\xi^{(\ell)}_2) \mapsto
  (G^{(\ell)}_1,G^{(\ell)}_2)=
 \ab{G}^{(\ell)}(\bb{\xi}^{(\ell)}), \quad \ell =1, 2, \ldots, P,
\end{align*}
i.e., $\Omega^{(\ell)} = \ab{G}^{(\ell)}([0,1]^{2})$.

Let  $\mathcal{S}^{d}_{k}$ denote the tensor-product spline space, spanned by B-splines of bidegree $(d,d)$ 
and knot vectors
\begin{equation*}
 (\underbrace{0,\ldots,0}_{d+1-\mbox{\scriptsize times}},
\underbrace{\textstyle \frac{1}{k+1},\ldots ,\frac{1}{k+1}}_{d-2 - \mbox{\scriptsize times}}, 
\underbrace{\textstyle \frac{2}{k+1},\ldots ,\frac{2}{k+1}}_{d-2 - \mbox{\scriptsize times}},\ldots, 
\underbrace{\textstyle \frac{k}{k+1},\ldots ,\frac{k}{k+1}}_{d-2 - \mbox{\scriptsize times}},
\underbrace{1,\ldots,1}_{d+1-\mbox{\scriptsize times}}),
\end{equation*}
in both parameter directions, where $k \in \N_{0}$ denotes the number of inner knots. 
Clearly $\ab{G}^{(\ell)} \in \mathcal{S}^{d}_{k} \times \mathcal{S}^{d}_{k}$, $\ell=1,2,\ldots, P$, which gives
\begin{equation}  \label{eq:geometry_mapping}
 \ab{G}^{(\ell)}(\bb{\xi}^{(\ell)})=\sum_{\ab{i} \in I} \ab{d}^{(\ell)}_{\ab{i}} N_{\ab{i}}(\bb{\xi}^{(\ell)}), \quad 
 I := \{\bfm{i} = (i_1,i_2); \; \bfm{0} \leq \bfm{i} \leq (d+k(d-2),d+k(d-2))\},
\end{equation}
where $ \ab{d}^{(\ell)}_{\ab{i}} \in \R^{2}$ are the
control points and $N_{\ab{i}}$ are the tensor-product B-splines.  

Throughout the paper, we will only consider degrees $d \in \{5, 6\}$ and assume that the number of inner knots satisfies $k \geq 7 - d$.

%%%%%%%%%%%%%%%%%%%%%%%%%%%%%%%%%%%%%%%%%%%%%%%%%%%%%%%%%%%%%%%%%%%%%%%%%%%%%%%%%%%%%%%%%%%%
\subsection{$C^2$-smooth geometrically continuous isogeometric functions} \label{sec:Geom_IgA}
%%%%%%%%%%%%%%%%%%%%%%%%%%%%%%%%%%%%%%%%%%%%%%%%%%%%%%%%%%%%%%%%%%%%

Recall that $ \ab{G}^{(\ell)}, \, \ell=1,2, \ldots,P$, are contained in $\mathcal{S}^{d}_k \times \mathcal{S}^{d}_k$.
On all patches $\Omega^{(\ell)}$ the
space of isogeometric functions is then given by
\begin{equation*}
 \mathcal{S}^{d}_k \circ (\ab{G}^{(\ell)})^{-1}.
\end{equation*} 
Let us consider the space
\begin{equation*}
V^{(k)} = \left\{ v \in C^{2}(\Omega) : \; v|_{\Omega^{(\ell)}} \in \mathcal{S}^{d}_{k} \circ (\ab{G}^{(\ell)})^{-1}, \; \ell \in \{1,2, \ldots, P\}   \right\},
\end{equation*}
which contains the {\em globally $C^{2}$-smooth isogeometric functions} on $\Omega$, and $k$ denotes the number of inner knots. 
Suppose that $w \in V^{(k)}$.
Then
\begin{equation*} \label{eq:single_function}
(w|_{\Omega^{(\ell)}})(\ab{x}) =  w^{(\ell)}(\ab{x}) = \left(W^{(\ell)} \circ (\ab{G}^{(\ell)})^{-1}\right)(\ab{x}), \quad \ab{x} \in \Omega^{(\ell)}, \;\; \ell=1,2, \ldots, P,
\end{equation*}
with
$$
 W^{(\ell)}(\bb{\xi}^{(\ell)})=\sum_{\ab{i} \in I} b^{(\ell)}_{\ab{i}} N_{\ab{i}}(\bb{\xi}^{(\ell)}) \in \mathcal{S}^{d}_k, \quad \ell=1,2, \ldots, P,
$$
where $b_{\bfm{i}}^{(\ell)} \in \R$ are the spline coefficients of the isogeometric function. 
The associated graph surfaces $\ab{F}^{(\ell)}$ of $w^{(\ell)}$
possess the form
\begin{equation*}
\ab{F}^{(\ell)}(\bb{\xi}^{(\ell)}) 
= \left(\ab{G}^{(\ell)}(\bb{\xi}^{(\ell)}),W^{(\ell)}(\bb{\xi}^{(\ell)}) \right)^T = 
\left( G^{(\ell)}_{1}(\bb{\xi}^{(\ell)}),G^{(\ell)}_{2}(\bb{\xi}^{(\ell)}),W^{(\ell)}(\bb{\xi}^{(\ell)}) \right)^{T}, \quad \ell=1,\ldots, P.
\end{equation*} 

Functions in $V^{(k)}$ can be characterized using the concept of geometric continuity (see \cite{Pe15, KaViJu15}). 
More precisely, an isogeometric 
function $w$ belongs to space $V^{(k)}$ if and only if for all neighboring patches $\Omega^{(\ell)}$ and $\Omega^{(\ell')}$ the two graph surfaces $\ab{F}^{(\ell)}$ and 
$\ab{F}^{(\ell')}$ meet at the common interface with $G^2$ continuity. Therefore, such functions have been called \emph{$C^2$-smooth geometrically continuous isogeometric 
functions} (cf. \cite{KaViJu15}). In \cite{KaplVitrih2016}, the $G^2$-continuity conditions for the two neighboring graphs $\ab{F}^{(\ell)}$ and $\ab{F}^{(\ell')}$, which 
determine linear constraints on the spline coefficients $b_{\bfm{i}}^{(\ell)}$ and $b_{\bfm{i}}^{(\ell')}$, have been derived. Let us shortly recall them.

Without loss of generality we can assume that $
\ab{G}^{(\ell)}(1,{\xi})=
\ab{G}^{(\ell')}(0,{\xi}),\, \xi =\xi_2^{(\ell)}=\xi_2^{(\ell')}\in[0,1].$ Otherwise, suitable linear reparameterizations of the two patches can be applied to fulfill this 
situation. Therefore, an isogeometric function~$w$ is continuous (graphs are $G^0$-smooth) across the common edge if and only if
\begin{equation}\label{eq:480}
W^{(\ell)}(1,{\xi})=
W^{(\ell')}(0,{\xi}). 
\end{equation}
The $C^1$-smoothness of the isogeometric function~$w$ ($G^1$-smoothness of graph surfaces)
is guaranteed if 
\begin{equation}  \label{eq:exampleorder1}
  \det \left(
(\partial_1 \ab{F}^{(\ell)})(1,\xi),  
(\partial_1 \ab{F}^{(\ell')})(0,\xi),
(\partial_2 \ab{F}^{(\ell)})(1,\xi) \right) = 0, \quad
\xi\in[0,1].
\end{equation}
Determinant \eqref{eq:exampleorder1} can be further expressed as
\begin{equation*}  \label{eq:G1conditionAlphaBetaOmega}
\partial_2 W^{(\ell)}(1,\xi) \alpha(\xi ) + \partial_1 W^{(\ell)}(1,\xi) \beta(\xi) - \partial_1 W^{(\ell')}(0,\xi) \gamma(\xi)   = 0, \quad
\xi\in[0,1],
\end{equation*}
where
%\begin{eqnarray}  \label{eq:AlphaBetaGamma}
% \alpha (\xi)  &= 
%    \partial_1 G_1^{(1)}(1,\xi)\;\partial_1 G_2^{(2)}(0,\xi) - \partial_1 G_1^{(2)}(0,\xi)\; \partial_1 %G_2^{(1)}(1,\xi), \nonumber \\
% \beta(\xi) &= 
% \partial_2 G_1^{(1)}(1,\xi)\; \partial_1 G_2^{(2)}(0,\xi) -   \partial_1 G_1^{(2)}(0,\xi)\;
%  \partial_2 G_2^{(1)}(1,\xi) , \\
%\gamma(\xi) & = 
% \partial_1 G_1^{(1)}(1,\xi)\; \partial_2 G_2^{(1)}(1,\xi)  -\partial_2 G_1^{(1)}(1,\xi) \;\partial_1 %G_2^{(1)}(1,\xi). \nonumber 
%\end{eqnarray}
\begin{eqnarray}  \label{eq:AlphaBetaGamma}
 \alpha (\xi)  &= \det \left( \partial_1 \ab{G}^{(\ell)}(1,\xi) , \; \partial_1 \ab{G}^{(\ell')}(0,\xi) \right), 
\nonumber \\
 \beta(\xi) &= 
 \det \left( \partial_1 \ab{G}^{(\ell')}(0,\xi) , \; \partial_2 \ab{G}^{(\ell)}(1,\xi) \right) , \\
\gamma(\xi) & = 
\det \left( \partial_1 \ab{G}^{(\ell)}(1,\xi) , \; \partial_2 \ab{G}^{(\ell)}(1,\xi) \right) . \nonumber 
\end{eqnarray}
$G^{2}$-conditions on both graph surfaces, which finally imply that the isogeometric functions are
 $C^{2}$-smooth, are of the form (cf. \cite{HoLa93,Pe02})
\begin{equation}  \label{eq:exampleorder2}
  \det \left(
  \ab{Z}(\xi),
(\partial_1 \ab{F}^{(\ell)})(1,\xi),
(\partial_2 \ab{F}^{(\ell)})(1,\xi) \right) = 0, \quad
\xi\in[0,1], 
\end{equation}
where
\begin{align*}  \label{eq:Z}
\ab{Z}(\xi) := & (Z_1 (\xi),Z_2 (\xi),Z_3 (\xi))^{T} :=\gamma^2(\xi) (\partial_1^2 \ab{F}^{(\ell')})(0,\xi)  - \nonumber \\
&\left( 
\beta^2(\xi) (\partial_1^2 \ab{F}^{(\ell)})(1,\xi) + 2 \alpha(\xi) \beta(\xi)  
(\partial_1 \partial_2 \ab{F}^{(\ell)})(1,\xi) + \alpha^2(\xi) (\partial_2^2 \ab{F}^{(\ell)})(1,\xi)
\right).
\end{align*}
Conditions \eqref{eq:480}, \eqref{eq:exampleorder1} and \eqref{eq:exampleorder2} determine linear constraints on B-spline coefficients $b_{\bfm{i}}^{(\ell)}$ which can be represented as a homogeneous linear system
\begin{equation}  \label{eq:nullspace}
T^{(k)} \bfm{b} = \bfm{0}, \quad  \bfm{b} = (b_{\bfm{i}}^{(\ell)})_{\bfm{i} \in I, \ell \in \{1,2,\ldots,P\}}.
\end{equation}
Any basis of the nullspace of $T^{(k)}$ now defines a basis of the space $V^{(k)}$.

%%%%%%%%%%%%%%%%%%%%%%%%%%%%%%%%%%%%%%%%%%%%%%%%%%%%%%%%%%%%%%%%%%%%%%%%%%%%%%%%%%%%
\section{Decomposition of the space into the direct sum of subspaces} \label{sec:decompsition}
%%%%%%%%%%%%%%%%%%%%%%%%%%%%%%%%%%%%%%%%%%%%%%%%%%%%%%%%%%%%%%%%%%%%%%%%%%%%%%%%%%%

The decomposition of $V^{(k)}$ into three subspaces will be described and the dimension of the single subspaces will be investigated. Thereby, the 
following theorem will be proved. 

\begin{thm}
Space $V^{(k)}$ can be decomposed into three subspaces, denoted by $V^{(k)}_{\Omega}$, $V^{(k)}_{\Gamma}$ and $V^{(k)}_{\Xi}$, 
such that
\begin{equation} \label{eq:thm:decomposition}
   V^{(k)} = V^{(k)}_{\Omega} \oplus V^{(k)}_{\Gamma} \oplus V^{(k)}_{\Xi},
\end{equation}
where the subspaces are defined in~\eqref{eq:space_Omega}, \eqref{eq:space_Gamma} and \eqref{eq:space_Xi}, respectively.  This implies
\[
\dim V^{(k)} = \dim V^{(k)}_{\Omega} + \dim V^{(k)}_{\Gamma} + \dim V^{(k)}_{\Xi}.
\]
\end{thm}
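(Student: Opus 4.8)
The plan is to carry out the decomposition at the level of the B-spline coefficient vector $\ab{b}$ and to exploit the locality of the smoothness constraints encoded in the matrix $T^{(k)}$ of \eqref{eq:nullspace}. First I would partition the full index set $\{(\ab{i},\ell) : \ab{i} \in I, \; \ell \in \{1,\ldots,P\}\}$ into three pairwise disjoint classes. The \emph{patch class} consists of those coefficients whose associated B-spline $N_{\ab{i}}$ has support entirely in the interior of a single patch, so that it meets no interface; the \emph{edge class} consists of the coefficients sitting in a band along some interface $\Gamma^{(j)}$ but away from every vertex; and the \emph{vertex class} collects the coefficients in a neighborhood of some vertex $\ab{v}^{(r)}$. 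The standing assumption $k \geq 7-d$ is precisely what guarantees that these bands are wide enough that the edge neighborhoods of distinct interfaces are disjoint and that each vertex neighborhood is separated from the edge bands of interfaces not incident to that vertex.

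With this partition in hand I would take $V^{(k)}_{\Omega}$, $V^{(k)}_{\Gamma}$ and $V^{(k)}_{\Xi}$ (as fixed by the later definitions \eqref{eq:space_Omega}, \eqref{eq:space_Gamma}, \eqref{eq:space_Xi}) to be the functions of $V^{(k)}$ whose nonvanishing coefficients are confined to the patch, edge and vertex class, respectively. The directness of the sum is then the easy half: since the three classes are disjoint, a relation $w_{\Omega}+w_{\Gamma}+w_{\Xi}=0$ with each summand in its subspace forces a coefficient vector that is a sum of three vectors with mutually disjoint supports, hence each summand vanishes. Equivalently the pairwise intersections are trivial, so the sum in \eqref{eq:thm:decomposition} is direct and the dimension formula follows at once.

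The substantive half is to show that the three subspaces actually \emph{span} $V^{(k)}$, i.e.\ that every $w \in V^{(k)}$ splits as the sum of its patch-, edge- and vertex-parts with each part again lying in $V^{(k)}$. The key point is that the constraints \eqref{eq:480}, \eqref{eq:exampleorder1} and \eqref{eq:exampleorder2} are local: the $G^0$, $G^1$ and $G^2$ conditions along a given interface, evaluated away from its endpoints, involve only coefficients of that interface's edge class, while the conditions activated near a vertex involve only coefficients of that vertex's class. If this holds, then every scalar row of $T^{(k)}$ is owned by a single class, so writing $\ab{b}=\ab{b}_{\Omega}+\ab{b}_{\Gamma}+\ab{b}_{\Xi}$ gives $T^{(k)}\ab{b}=T^{(k)}\ab{b}_{\Omega}+T^{(k)}\ab{b}_{\Gamma}+T^{(k)}\ab{b}_{\Xi}$ as a sum with disjoint row-supports; since $T^{(k)}\ab{b}=\ab{0}$, each of the three products vanishes separately, placing each projection back in $V^{(k)}$ and yielding the spanning property.

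The main obstacle is establishing this decoupling rigorously for the second-order conditions. The $G^2$ relation \eqref{eq:exampleorder2}, through the vector $\ab{Z}(\xi)$ and the quadratic factors $\alpha^{2},\alpha\beta,\beta^{2},\gamma^{2}$ built from \eqref{eq:AlphaBetaGamma}, has a wider coefficient stencil than the $G^0$ and $G^1$ relations. I would therefore have to verify, for $k \geq 7-d$ and $d \in \{5,6\}$, that the coefficients $b^{(\ell)}_{\ab{i}}$ entering the $G^2$ condition along the interior of an interface lie strictly within that interface's edge band and never reach into a vertex neighborhood, and symmetrically that no condition activated near a vertex couples a coefficient living in a neighboring interface's interior. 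Once this support bookkeeping is carried out, using the knot multiplicities of $\mathcal{S}^{d}_{k}$ to pin down exactly which coefficients each condition sees, the block structure of $T^{(k)}$ becomes transparent and the decomposition follows.
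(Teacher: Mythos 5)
Your argument hinges on the claim that, after partitioning the coefficients into three pairwise disjoint classes, ``every scalar row of $T^{(k)}$ is owned by a single class.'' This decoupling fails between the edge class and the vertex class, and it is not a technicality that more careful bookkeeping would repair. The conditions \eqref{eq:480}, \eqref{eq:exampleorder1}, \eqref{eq:exampleorder2} are identities of spline functions in the interface parameter $\xi$; expanding them in the B-spline basis along $\Gamma^{(j)}$ produces, near $\xi=0$ and $\xi=1$, scalar equations that simultaneously involve coefficients with $i_2\leq 2$ (which lie in $\mathcal{I}_{\Xi^{(r)}}$ for the endpoint vertex) and coefficients with $i_2\geq 3$ (which lie in the interior edge band $\mathcal{I}_{\Gamma^{(j)}}$), the more so because the factors $\alpha^2,\alpha\beta,\beta^2,\gamma^2$ raise the degree in $\xi$ and widen the stencil. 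The paper's own dimension count makes this coupling explicit: in the proof of the edge-space lemma, after forcing the vertex coefficients to zero one must still subtract $9\tilde\mu+11(2-\tilde\mu)$ residual equations acting on the edge band --- these are precisely rows of $T^{(k)}$ that straddle the two classes. Consequently, writing $\ab{b}=\ab{b}_{\Omega}+\ab{b}_{\Gamma}+\ab{b}_{\Xi}$ by disjoint supports does not give $T^{(k)}\ab{b}_{\Gamma}=T^{(k)}\ab{b}_{\Xi}=\bfm{0}$ separately, and your three subspaces do not span $V^{(k)}$: a function interpolating nonzero $C^2$ data at a vertex (as in the paper's Lemma on $\Xi_1^{(0)}$) necessarily carries nonzero coefficients in the incident edge bands, so it admits no splitting into three disjointly supported pieces each of which is again $C^2$.

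The missing idea is the paper's use of minimal determining sets to resolve exactly this overlap. The vertex space is \emph{not} defined by confining the support to $\mathcal{I}_{\Xi^{(r)}}$; by \eqref{eq:subspace_singlevertex} its elements are allowed nonzero coefficients on the incident edge bands, but only on $\mathcal{I}_{\Gamma^{(j)}}\setminus\mathcal{I}_{M^{(j)}}$, the complement of a minimal determining set $M^{(j)}$ for the edge constraints. Directness against $V^{(k)}_{\Gamma}$ is then recovered not from disjointness of supports but from the determining property: an edge function vanishing on $M^{(j)}$ vanishes identically, while a vertex function vanishes on $M^{(j)}$ by construction. Your patch-versus-rest separation and the resulting directness of $V^{(k)}_{\Omega}$ against the other two are fine, and your overall strategy of arguing on the coefficient vector is the same as the paper's; but without the minimal-determining-set device the decomposition you propose is a strictly smaller space than $V^{(k)}$, so the theorem as stated is not proved.
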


%%%%%%%%%%%%%%%%%%%%%%%%%%%%%%%%%%%%%%%%%%%%%%%%%%%%%%%%%%%%%%%%%%%%%%%%%%%%%%%%%%%%
\subsection{Index spaces}
%%%%%%%%%%%%%%%%%%%%%%%%%%%%%%%%%%%%%%%%%%%%%%%%%%%%%%%%%%%%%%%%%%%%

Let us first introduce some index spaces of the spline coefficients $\ab{d}_{\ab{i}}^{(\ell)}$ and $b_{\ab{i}}^{(\ell)}$, which will be needed in the following subsections. 
\begin{itemize}
\item For each patch~$\Omega^{(\ell)}$, $\ell=1,2,\ldots,P$, we denote by $\mathcal{I}^{(\ell)}$ the index space 
\[
\mathcal{I}^{(\ell)}= \{\ell \} \times I
\]
of the indices of the spline coefficients $\ab{d}_{\bfm{i}}^{(\ell)}$ of the patch $\Omega^{(\ell)}$, where $I$ is defined in~\eqref{eq:geometry_mapping}. 
The index space $\mathcal{I}$ is the union of all index spaces $\mathcal{I}^{(\ell)}$, $\ell=1,2,\ldots,P$, i.e., 
\[
\mathcal{I} = \bigcup_{\ell=1,2,\ldots,P} \mathcal{I}^{(\ell)}.
\]
\item For each non-boundary edge $\Gamma^{(j)}$, $j=1,2,\ldots, E$, we denote by $\mathcal{I}_{\Gamma^{(j)}} \subset \mathcal{I}$ the space of the indices of the 
spline coefficients $\ab{d}_{\bfm{i}}^{(\ell)}$ of $\Gamma^{(j)} = \Omega^{(\ell)} \cap \Omega^{(\ell')}$ and of the two neighboring columns of spline coefficients 
in $\Omega^{(\ell)}$ and $\Omega^{(\ell')}$, excluding the first three and the last three spline coefficients in each column (see Fig.~\ref{fig:indexSpaces}, 
red coefficients). The index space $\mathcal{I}_{\Gamma} \subset \mathcal{I}$ is defined as the union of all index spaces $\mathcal{I}_{\Gamma^{(j)}}$, $j=1,2,\ldots,E$, 
i.e.,
\[
\mathcal{I}_{\Gamma} = \bigcup_{j=1,2,\ldots,E} \mathcal{I}_{\Gamma^{(j)}}.
\]

\item For each vertex $\ab{v}^{(r)}$, $r=1,2,\ldots,V$, we denote by $\mathcal{I}_{\Xi^{(r)}} \subset \mathcal{I}$ the space of the indices 
$(\ell,\ab{i})=(\ell, (i_{1},i_{2})) \in \mathcal{I}$ of the spline coefficients $\ab{d}_{\ab{i}}^{(\ell)}$, for which the corresponding patch $\Omega^{(\ell)}$ 
contains the vertex $\ab{v}^{(j)}$ and the indices $(i_{1},i_{2})$ satisfy $|i_{1}-i_{1}^{(\ell)}|\leq 2$ and $|i_{2}-i_{2}^{(\ell)}|\leq 2$ when $\ab{v}^{(j)}$ is given 
by $\ab{d}_{(i_{1}^{(\ell)},i_{2}^{(\ell)})}^{(\ell)}$ (see Fig.~\ref{fig:indexSpaces}, cyan coefficients). The index space $\mathcal{I}_{\Xi} \subset \mathcal{I}$ is 
defined as the union of all index spaces $\mathcal{I}_{\Xi^{(r)}}$, $r=1,2,\ldots,V$, i.e.,
\[
\mathcal{I}_{\Xi} = \bigcup_{r=1,2,\ldots,V} \mathcal{I}_{\Xi^{(r)}}.
\]
\end{itemize}

\begin{figure}[htb]\centering
\includegraphics[width=9cm]{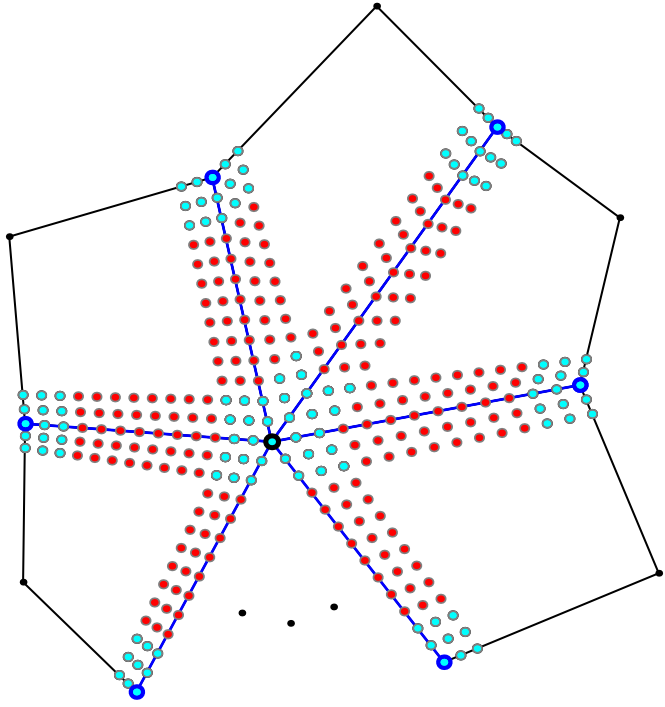}
\caption{A multi-patch domain with one inner vertex (black ring) and some boundary vertices (blue rings). Non-boundary edges are colored in blue and boundary edges in black. 
Spline coefficients corresponding to index spaces $\mathcal{I}_{\Gamma^{(j)}}$ are colored in red for all non-boundary edges $\Gamma^{(j)}$. Furthermore, spline 
coefficients corresponding to index spaces $\mathcal{I}_{\Xi^{(r)}}$ are colored in cyan for all vertices $\bfm{v}^{(r)}$. We draw each spline coefficient 
corresponding to a non-boundary edge~$\Gamma^{(j)}$ only once, although the coefficient occurs twice (and for the inner vertex even more often) in the index 
space $\mathcal{I}$.}
\label{fig:indexSpaces}
\end{figure}

The introduced index spaces for the spline coefficients~$\ab{d}_{\ab{i}}^{(\ell)}$ can be used analogously for the spline coefficients $b_{\ab{i}}^{(\ell)}$. 

%%%%%%%%%%%%%%%%%%%%%%%%%%%%%%%%%%%%%%%%%%%%%%%%%%%%%%%%%%%%%%%%%%%%%%%%%%%%%
\subsection{Patch space}
%%%%%%%%%%%%%%%%%%%%%%%%%%%%%%%%%%%%%%%%%%%%%%%%%%%%%%%%%%%%%%%%%%%%

We consider the space $V^{(k)}_{\Omega} \subset V^{(k)}$, which is defined as
\begin{equation}\label{eq:space_Omega}
V^{(k)}_{\Omega} = \{ w \in V^{(k)} | \; b_{\ab{i}}^{(\ell)}=0 \mbox{ for }(\ell,\ab{i}) \in \mathcal{I}_{\Gamma} \cup \mathcal{I}_{\Xi} \}.
\end{equation}
The space $V_\Omega^{(k)}$ will be called \emph{patch space} and can be decomposed into $P$ 
subspaces $V^{(k)}_{\Omega^{(\ell)}}$, $\ell=1,2,\ldots,P$, such that
\[
V_\Omega^{(k)} = \bigoplus_{\ell \in \{1,2,\ldots,P\} } V_{\Omega^{(\ell)}}^{(k)},
\]
where
\begin{equation} \label{eq:space_Omega_single}
V^{(k)}_{\Omega^{(\ell)}} = \{ w \in V^{(k)} | \; b_{\ab{i}}^{(\ell)}=0 \mbox{ for }(\ell,\ab{i}) \in (\mathcal{I} \setminus \mathcal{I}^{(\ell)}) \cup \mathcal{I}_{\Gamma} \cup \mathcal{I}_{\Xi} \}.
\end{equation}
The dimension of each $V_{\Omega^{(\ell)}}^{(k)}$ depends on the number of boundary edges of $\Omega^{(\ell)}$. 

\begin{lem}
Let $\Omega^{(\ell)}$ be a patch of $\Omega$ with $r_\Gamma$ boundary edges having $r_V$ common vertices. Then
\begin{equation} \label{eq:number_firstkind}
  \dim V_{\Omega^{(\ell)}}^{(k)} =  \begin{cases}
    ((d-2)(k+1)-3)^2, \quad r_\Gamma=0,\\ 
      ((d-2)(k+1)-3) ((d-2)(k+1)),  \quad r_\Gamma=1,\\
       ((d-2)(k+1))^2 - 9,  \quad r_\Gamma=2, \; r_V = 0,\\
        ((d-2)(k+1))^2,  \quad r_\Gamma=2, \; r_V = 1,\\
         ((d-2)(k+1)- 3) ((d-2)(k+1)),  \quad r_\Gamma=3.\\
\end{cases}
\end{equation} 
\end{lem}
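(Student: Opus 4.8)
The plan is to reduce the statement to a block-counting argument by first showing that the spline coefficients of $W^{(\ell)}$ which are not explicitly set to zero in \eqref{eq:space_Omega_single} are in fact subject to no further constraint. A function in $V_{\Omega^{(\ell)}}^{(k)}$ has $b_{\mathbf{i}}^{(\ell')}=0$ for all $\ell'\neq\ell$ and $b_{\mathbf{i}}^{(\ell)}=0$ for $(\ell,\mathbf{i})\in\mathcal{I}_\Gamma\cup\mathcal{I}_\Xi$, so the only potentially nonzero coefficients are those indexed by $\mathcal{I}^{(\ell)}\setminus(\mathcal{I}_\Gamma\cup\mathcal{I}_\Xi)$. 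First I would verify that every such coefficient assignment already produces an element of $V^{(k)}$. Across each non-boundary edge of $\Omega^{(\ell)}$ the neighbouring patch carries the zero function, and the three coefficient columns of $W^{(\ell)}$ adjacent to that edge are identically zero: their interior entries lie in $\mathcal{I}_\Gamma$, while the three top and three bottom entries of each column lie in the vertex blocks $\mathcal{I}_\Xi$ of the two endpoints of the edge, which are genuine vertices $\mathbf{v}^{(r)}$ because a non-boundary edge is shared and its endpoints therefore have valency $\geq 3$. Since the boundary knots have multiplicity $d+1$, the restrictions of $W^{(\ell)}$ and of its derivatives up to order two to such an edge depend only on these three columns and hence vanish; conditions \eqref{eq:480}, \eqref{eq:exampleorder1} and \eqref{eq:exampleorder2} then reduce to $0=0$, and along boundary edges nothing is imposed. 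By linear independence of the B-splines $N_{\mathbf{i}}$ the free coefficients parameterise $V_{\Omega^{(\ell)}}^{(k)}$ bijectively, so $\dim V_{\Omega^{(\ell)}}^{(k)}=|\mathcal{I}^{(\ell)}\setminus(\mathcal{I}_\Gamma\cup\mathcal{I}_\Xi)|$.

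It then remains to count this index set. The prescribed knot multiplicities give the univariate dimension $n:=(d-2)(k+1)+3$, so the coefficients of $\Omega^{(\ell)}$ lie on the grid $\{0,\dots,n-1\}^2$, which I would split into the nine rectangular blocks generated by the index ranges $\{0,1,2\}$, $\{3,\dots,n-4\}$ and $\{n-3,n-2,n-1\}$ in each direction, whose side lengths are $3$ and $n-6=(d-2)(k+1)-3$. Each non-boundary edge deletes the central portion (away from the two corners) of the three-line coefficient strip adjacent to its side, and each genuine vertex deletes the $3\times 3$ corner block at its corner; the edge blocks and the corner blocks are pairwise disjoint, so the surviving count is a plain sum of block areas. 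I would then run through the five configurations, in each case recording which corners are genuine vertices, and add the areas of the surviving blocks, every such area being a product of factors from $\{3,\,(d-2)(k+1)-3\}$; collecting terms gives the dimension in each case and reproduces \eqref{eq:number_firstkind}.

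The one delicate point, which is exactly what separates the configurations, is the valency of the corners. A corner where a boundary edge meets a non-boundary edge, or where two non-boundary edges meet, is shared with a further patch, has valency $\geq 3$, and so contributes a deleted corner block; a corner where two boundary edges meet is contained in $\Omega^{(\ell)}$ alone, has valency two, is by the standing convention not a vertex $\mathbf{v}^{(r)}$, and so keeps its corner block free. This is precisely why the two $r_\Gamma=2$ rows differ: opposite boundary edges share no corner, all four corners remain genuine vertices, and four deleted corner blocks give $((d-2)(k+1))^2-9$, whereas adjacent boundary edges meet at a single valency-two corner whose freed block restores the missing $9$ and gives $((d-2)(k+1))^2$. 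The same bookkeeping, together with the fact that replacing a non-boundary edge by a boundary edge frees one central edge-block, governs the $r_\Gamma=0,1,3$ rows, and I would confirm each entry by subtracting the deleted block areas from $n^2$ with $n=(d-2)(k+1)+3$.
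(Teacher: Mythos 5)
Your overall route is the same as the paper's: the paper's entire proof is the one-line observation that the coefficients indexed by $\mathcal{I}^{(\ell)}\setminus(\mathcal{I}_{\Gamma}\cup\mathcal{I}_{\Xi})$ do not occur in any of the constraints \eqref{eq:480}, \eqref{eq:exampleorder1}, \eqref{eq:exampleorder2}, whence $\dim V_{\Omega^{(\ell)}}^{(k)}=|\mathcal{I}^{(\ell)}\setminus(\mathcal{I}_{\Gamma}\cup\mathcal{I}_{\Xi})|$, with the actual count left to the reader. Your justification of that first step (the three coefficient columns adjacent to each non-boundary edge vanish identically, so all smoothness conditions against the zero function on the neighbouring patch are trivially satisfied) is a correct and useful elaboration, as is your observation that a corner where two boundary edges meet has valency two and hence carries no deleted $3\times3$ block.

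The difficulty is in the final step, where you assert that the block bookkeeping ``reproduces'' every row of \eqref{eq:number_firstkind}. Run your own count for $r_\Gamma=3$ with $n=(d-2)(k+1)+3$: there is exactly one non-boundary edge, hence exactly one deleted strip of size $3(n-6)$ and exactly two deleted corner blocks (the endpoints of that edge; the remaining two corners have valency two). In other words, precisely the three columns along the single interior edge are deleted and nothing else, leaving $n(n-3)=\left((d-2)(k+1)+3\right)\left((d-2)(k+1)\right)$ free coefficients — not the value $\left((d-2)(k+1)-3\right)\left((d-2)(k+1)\right)$ displayed in the statement. So the claim of agreement fails for that row; either you did not actually carry out this case, or the arithmetic was bent to match the display. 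The $+3$ value is also the one forced by consistency: a two-patch domain has $r_\Gamma=3$ for both patches, and only with $n(n-3)$ per patch does $\dim V^{(k)}_{\Omega}+\dim V^{(k)}_{\Gamma}+\dim V^{(k)}_{\Xi}$ reassemble into $2\,n(n-3)$ plus the interface dimension quoted from the two-patch paper. The other four rows check out exactly as you describe. You should exhibit the $r_\Gamma=3$ computation explicitly and flag the discrepancy rather than assert reproduction of the stated formula.
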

\begin{proof}
Since the spline coefficients $b_{\ab{i}}^{(\ell)}$ for $(\ell,\ab{i}) \in \mathcal{I}^{(\ell)} \setminus (\mathcal{I}_{\Gamma} \cup \mathcal{I}_{\Xi})$ are not involved in the linear constraints \eqref{eq:480}, \eqref{eq:exampleorder1} and \eqref{eq:exampleorder2}, we obtain 
\[\dim V_{\Omega^{(\ell)}}^{(k)} = |\mathcal{I}^{(\ell)} \setminus (\mathcal{I}_{\Gamma} \cup \mathcal{I}_{\Xi})|. \] 
\end{proof}

%%%%%%%%%%%%%%%%%%%%%%%%%%%%%%%%%%%%%%%%%%%%%%%%%%%%%%%%%%%%%%%%%%%%%%%%%%%%%%%%%%%%
\subsection{Edge space}
%%%%%%%%%%%%%%%%%%%%%%%%%%%%%%%%%%%%%%%%%%%%%%%%%%%%%%%%%%%%%%%%%%%%

We consider the space $V^{(k)}_{\Gamma} \subset V^{(k)}$, which is defined as 
\begin{equation} \label{eq:space_Gamma}
V^{(k)}_{\Gamma} =\{ w \in V^{(k)} | \; b_{\ab{i}}^{(\ell)}=0 \mbox{ for }(\ell,\ab{i}) \in \mathcal{I} \setminus \mathcal{I}_{\Gamma} \}.
\end{equation}
The space $V^{(k)}_{\Gamma}$ will be called \emph{edge space} and can be decomposed into $E$ subspaces $V^{(k)}_{\Gamma^{(j)}}$, $j=1,2,\ldots,E$, such that
\[
V_\Gamma^{(k)} = \bigoplus_{j \in \{1,2,\ldots,E\} } V_{\Gamma^{(j)}}^{(k)},
\]
where
\begin{equation} \label{eq:space_Gamma_single}
V^{(k)}_{\Gamma^{(j)}} =\{ w \in V^{(k)} | \; b_{\ab{i}}^{(\ell)}=0 \mbox{ for }(\ell,\ab{i}) \in \mathcal{I} \setminus \mathcal{I}_{\Gamma^{(j)}} \}.
\end{equation}
Clearly,
\begin{equation} \label{eq:thmIntersection1}
V^{(k)}_{\Omega} \cap V^{(k)}_{\Gamma}  = \{0\}.
\end{equation}
Any pair of neighboring patches $\Omega^{(\ell)}$ and $\Omega^{(\ell')}$ defines a two-patch subdomain  $\Omega^{(\ell,\ell')} := \Omega^{(\ell)} \cup \Omega^{(\ell')}$ with the common 
edge $\Gamma^{(j)}$. 
The shape of $\Omega^{(\ell,\ell')}$ is determined by six shape points (see Fig.~\ref{fig:two-patchExample}), which will be locally denoted as
\begin{equation}  \label{eq:localVertices_TwoPatch}
\bfm{v}^{(i)}_{\Omega^{(\ell,\ell')} } ,  \quad 
i=0,1,\ldots,5.
\end{equation}
In \cite{KaplVitrih2016} the two-patch case was investigated, where four different configurations of two-patch domains, called Configuration A,B,C and D, were 
introduced (cf. \cite[Section 2]{KaplVitrih2016}) to study the dimension.  The therein presented results can be used to compute the dimension of 
$\dim V_{\Gamma^{(j)}}^{(k)}$, $j \in \{1,2,\ldots ,E \}$.

\begin{figure}[htb]\centering
\includegraphics[width=8cm]{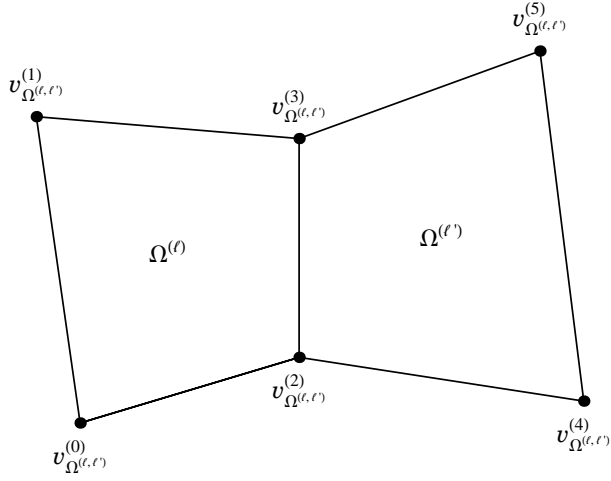}
\caption{Six shape points \eqref{eq:localVertices_TwoPatch} of a two-patch domain $\Omega^{(\ell,\ell')}$.}
\label{fig:two-patchExample}
\end{figure}

\begin{lem}
Let $\Gamma^{(j)} = \Omega^{(\ell)} \cap \Omega^{(\ell')}$ be a non-boundary edge and $\Omega^{(\ell,\ell')} = \Omega^{(\ell)} \cup \Omega^{(\ell')} $ a two-patch subdomain in $\Omega$. Then
\begin{equation} \label{eq:dim1}
\dim V_{\Gamma^{(j)}}^{(k)} = (k+1)\dim V_{\Omega^{(\ell,\ell')}}^{(0)} - 9  \mu - 11 (k+2 -\mu),
\end{equation}
where $\mu \in \{0,1,2,k+2\}$ denotes the number of collinear point triplets 
\begin{equation}  \label{eq:triples}
(1 - \xi_i) \bfm{v}^{(0)}_{\Omega^{(\ell,\ell')}} + \xi_i \bfm{v}^{(1)}_{\Omega^{(\ell,\ell')}}, \quad 
(1 - \xi_i) \bfm{v}^{(2)}_{\Omega^{(\ell,\ell')}} + \xi_i \bfm{v}^{(3)}_{\Omega^{(\ell,\ell')}}, \quad
(1 - \xi_i) \bfm{v}^{(4)}_{\Omega^{(\ell,\ell')}} + \xi_i \bfm{v}^{(5)}_{\Omega^{(\ell,\ell')}}, 
\end{equation}
for $\xi_i = \frac{i}{k+1},  i=0,1,\ldots,k+1,$ and 
\begin{equation*} \label{eq:dim_two_initial}
\dim V_{\Omega^{(\ell,\ell')}}^{(0)}  = \begin{cases}
  3d, \qquad \mbox{if }\Omega^{(\ell,\ell')} \mbox{ is a Configuration A two-patch domain},\\ 
  3d+2, \;\mbox{if }\Omega^{(\ell,\ell')} \mbox{ is a Configuration D two-patch domain},\\
  3d+3,  \;\mbox{if }\Omega^{(\ell,\ell')} \mbox{ is a Configuration B or C two-patch domain}.
\end{cases}
\end{equation*}
\end{lem}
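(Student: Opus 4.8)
The plan is to reduce the computation to the two-patch analysis of \cite{KaplVitrih2016} and then track how the edge-space dimension scales under insertion of $k$ inner knots. By the defining condition \eqref{eq:space_Gamma_single}, the only free coefficients of a function in $V_{\Gamma^{(j)}}^{(k)}$ are those indexed by $\mathcal{I}_{\Gamma^{(j)}}$, which are confined to $\Gamma^{(j)}$ together with the adjacent coefficient columns in $\Omega^{(\ell)}$ and $\Omega^{(\ell')}$, the first three and last three entries of each column being set to zero. Since every other coefficient vanishes, the only constraints acting on $w$ are the $G^2$ conditions \eqref{eq:480}, \eqref{eq:exampleorder1} and \eqref{eq:exampleorder2} across $\Gamma^{(j)}$, so $\dim V_{\Gamma^{(j)}}^{(k)}$ equals the nullity of the associated two-patch system on $\Omega^{(\ell,\ell')}$.

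I would then exploit the bilinear geometry. Each $\ab{G}^{(\ell)}$ being bilinear, the functions $\alpha$, $\beta$, $\gamma$ in \eqref{eq:AlphaBetaGamma} are low-degree polynomials in $\xi$, so \eqref{eq:480}, \eqref{eq:exampleorder1} and \eqref{eq:exampleorder2} reduce to polynomial identities in $\xi$ of controlled degree. As the inner knots have multiplicity $d-2$, the splines $W^{(\ell)}$ are $C^2$ across every inner knot, and the conditions may be analysed breakpoint by breakpoint on the $k+1$ knot intervals induced by $\xi_i = \tfrac{i}{k+1}$. Writing the $G^2$ conditions in B-spline form produces a homogeneous linear system whose nullity I would count by organising the free coefficients interval by interval and the constraints at the $k+2$ breakpoints $\xi_0,\dots,\xi_{k+1}$.

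The base case is furnished by \cite{KaplVitrih2016}: for $k=0$ the two-patch edge space has dimension $3d$, $3d+2$ or $3d+3$ according to whether $\Omega^{(\ell,\ell')}$ is of Configuration A, D, or B/C. Refining replicates the per-interval degrees of freedom across the $k+1$ intervals, yielding the leading term $(k+1)\dim V_{\Omega^{(\ell,\ell')}}^{(0)}$, from which the breakpoint constraints must be subtracted. The decisive point is the rank of the local $G^2$-matching block at each $\xi_i$: it equals $11$ generically but drops to $9$ exactly when the three points of the triplet \eqref{eq:triples} at $\xi_i$ are collinear, since collinearity induces a rank-two degeneracy in the determinant conditions \eqref{eq:exampleorder1} and \eqref{eq:exampleorder2}. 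Moreover, collinearity of \eqref{eq:triples} is controlled by a polynomial in $\xi$ of degree at most two, so it either vanishes identically---forcing $\mu = k+2$ and matching the degenerate configurations---or holds at no more than two breakpoints, giving $\mu \in \{0,1,2\}$; this explains the admissible range of $\mu$. Summing $11$ over the $k+2-\mu$ generic breakpoints and $9$ over the $\mu$ collinear ones yields the correction $9\mu + 11(k+2-\mu)$ and hence \eqref{eq:dim1}.

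The main obstacle will be establishing this rank dichotomy rigorously---that the local constraint block has rank exactly $11$ in general and exactly $9$ under collinearity of \eqref{eq:triples}---together with confirming that suppressing the three boundary coefficients of each column (the vertex-region entries absent from $\mathcal{I}_{\Gamma^{(j)}}$) leaves precisely these breakpoint constraints active, without introducing further rank drops or couplings between neighbouring intervals. The per-configuration bookkeeping from \cite{KaplVitrih2016} must also be verified so that the tabulated values of $\dim V_{\Omega^{(\ell,\ell')}}^{(0)}$ enter the $(k+1)$-scaling correctly.
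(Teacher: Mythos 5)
Your overall strategy is the right one and is essentially the paper's: reduce everything to the two-patch $G^2$ system on $\Omega^{(\ell,\ell')}$, take the $k=0$ dimensions from \cite{KaplVitrih2016} as the per-interval count, and subtract $11$ or $9$ per breakpoint according to whether the corresponding triplet \eqref{eq:triples} is collinear, with the quadratic polynomial $\alpha$ forcing $\mu\in\{0,1,2,k+2\}$. The paper, however, does not attempt the uniform per-breakpoint rank analysis you describe; it splits the count into two steps, and the split matters at exactly the point you flag as your ``main obstacle''. It first introduces an auxiliary space $\bar{V}^{(k)}$ in which the vertex blocks $\mathcal{I}_{\Xi^{(r_1)}}$, $\mathcal{I}_{\Xi^{(r_2)}}$ at the two endpoints of $\Gamma^{(j)}$ are still free, so that \cite[Theorem~10]{KaplVitrih2016} applies verbatim and already delivers $(k+1)\dim V^{(0)}_{\Omega^{(\ell,\ell')}}-9\bar{\mu}-11(k-\bar{\mu})$, where $\bar{\mu}$ counts collinearity only at the $k$ \emph{inner} breakpoints. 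Only then does it pass to $V^{(k)}_{\Gamma^{(j)}}$ by zeroing the vertex-block coefficients, and the additional $-11$ or $-9$ per endpoint comes from \cite[Lemmas~7 and~8]{KaplVitrih2016}: each endpoint block carries $15$ coefficients but only $4$ (generic) or $6$ (collinear) independent equations supported on it, so its suppression removes $15-4=11$ or $15-6=9$ dimensions.

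This is the genuine gap in your plan: you treat all $k+2$ breakpoints as carrying the same kind of ``local $G^2$-matching block'', but at $\xi_0=0$ and $\xi_{k+1}=1$ there is no adjacent interval to match with, and the $11$/$9$ there is not the rank of a matching block at all --- it is the number of dimensions lost when the $15$ vertex-block coefficients are set to zero inside $\bar{V}^{(k)}$. The numerical coincidence that both mechanisms produce $11$ and $9$ is what lets the final formula be written uniformly over $k+2$ breakpoints, but a proof organized as you propose would have to establish the endpoint count by a separate argument anyway; as written, your rank-dichotomy assertion would fail if interpreted literally at the endpoints. Once you replace the uniform claim by the two citations above (Theorem~10 for the inner breakpoints, Lemmas~7--8 for the endpoints), the rest of your argument --- in particular the observation that collinearity of \eqref{eq:triples} is governed by the quadratic $\alpha$, whence $\mu\in\{0,1,2,k+2\}$ --- closes the proof exactly as in the paper.
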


\begin{pf}
Let $r_{1},r_{2} \in \{1,2,\ldots, V\}$ be the two indices of the two boundary vertices of $\Gamma^{(j)}$, i.e., $\ab{v}^{(r_{1})}=\ab{v}^{(2)}_{\Omega^{(\ell,\ell')}}$ and $\ab{v}^{(r_{2})}=\ab{v}^{(3)}_{\Omega^{(\ell,\ell')}}$. Consider the space 
\[
\bar{V}^{(k)} = \{w \in V^{(k)} | \; b_{\ab{i}}^{(\ell)}=0 \mbox{ for } \mathcal{I} \setminus ( (\mathcal{I}_{\Gamma^{(j)}} \cup \mathcal{I}_{\Xi^{(r_{1})}} \cup \mathcal{I}_{\Xi^{(r_{2})}})  \cap (\mathcal{I}^{(\ell)} \cup \mathcal{I}^{(\ell')}) )\}.
\]
Note that functions in $\bar{V}^{(k)}$ are not $C^{2}$-smooth on $\Omega$ but are $C^{2}$-smooth on $\Omega^{(\ell,\ell')}$.
\cite[Theorem 10]{KaplVitrih2016} provides us the dimension of $\bar{V}^{(k)}$, which is equal to
\[
\dim \bar{V}^{(k)} = (k+1) \dim V^{(0)}_{\Omega^{(\ell,\ell')}} -9 \bar{\mu} -11 (k - \bar{\mu}),
\] 
where $\bar{\mu} \in \{0,1,2,k \}$ denotes the number of collinear point triplets~\eqref{eq:triples} for $\xi_i = \frac{i}{k+1},  i=1,2,\ldots,k$. 
The space $V^{(k)}_{\Gamma^{(j)}}$ is equivalent to the space 
\begin{equation}  \label{eq:spaceTwoPatchZeros}
\{ w \in \bar{V}^{(k)} | \; b_{\ab{i}}^{(\ell)}=0 \mbox{ for }(\ell,\ab{i}) \in  \mathcal{I}_{\Xi^{(r_{1})}} \cup \mathcal{I}_{\Xi^{(r_{2})}}\}. 
\end{equation}
Note that functions in the space \eqref{eq:spaceTwoPatchZeros} are $C^{2}$-smooth on the whole domain $\Omega$.
Using \cite[Lemma 7 and 8]{KaplVitrih2016}, we obtain
\begin{equation} \label{eq:dim2}
\dim V^{(k)}_{\Gamma^{(j)}} = \dim \bar{V}^{(k)} - 9 \tilde{\mu} - 11 (2-\tilde{\mu}),
\end{equation} 
where $\tilde{\mu} \in \{0,1,2 \}$ denotes how many of the two point triplets 
\[
\ab{v}^{(0)}_{\Omega^{(\ell,\ell')}}, \; \ab{v}^{(2)}_{\Omega^{(\ell,\ell')}},\;  \ab{v}^{(4)}_{\Omega^{(\ell,\ell')}} \quad \mbox{ and }\quad 
\ab{v}^{(1)}_{\Omega^{(\ell,\ell')}}, \; \ab{v}^{(3)}_{\Omega^{(\ell,\ell')}},\;  \ab{v}^{(5)}_{\Omega^{(\ell,\ell')}}
\]
are collinear. Thus $\mu \in \{0,1,2,k+2 \}$, since $\alpha$ defined in~\eqref{eq:AlphaBetaGamma}, is a quadratic polynomial (compare \cite[Lemma 9]{KaplVitrih2016}). This implies that~\eqref{eq:dim2} is equivalent to~\eqref{eq:dim1}. In addition, the assumption $k\geq 7-d$ ensures that $\dim V^{k}_{\Gamma^{(j)}} \geq 0$. 
\qed
\end{pf}

%%%%%%%%%%%%%%%%%%%%%%%%%%%%%%%%%%%%%%%%%%%%%%%%%%%%%%%%%%%%%%%%%%%%%%%%%%%%%%%%%%%%
\subsection{Vertex space} \label{subsec:vertex_space}
%%%%%%%%%%%%%%%%%%%%%%%%%%%%%%%%%%%%%%%%%%%%%%%%%%%%%%%%%%%%%%%%%%%%

To define the space $V^{(k)}_{\Xi} \subset V^{(k)}$, called {\em vertex space}, we need the concept of finding a minimal determining set for a set of spline 
coefficients with respect to a homogeneous linear system (cf. \cite[][Section 5.6]{LaSch07}).

\begin{defn}
Consider a homogeneous linear system
\begin{equation} \label{eq:system_MDS}
\tilde{T} \tilde{\bfm{b}} = \bfm{0}, \quad  \tilde{\bfm{b}} = (\tilde{b}_{\bfm{i}})_{\bfm{i}}.
\end{equation}
The minimal determining set of the coefficients $\{ \tilde{b}_{\bfm{i}} \}_{\bfm{i}}$ with respect to the system~\eqref{eq:system_MDS} is the smallest subset 
$M \subseteq \{ \tilde{b}_{\bfm{i}} \}_{\bfm{i}}$, such that imposing zero coefficients in $M$ yields vanishing coefficients in 
$M \setminus \{ \tilde{b}_{\bfm{i}} \}_{\bfm{i}}$, too, in order to satisfy~\eqref{eq:system_MDS}.
\end{defn}
Note that in general minimal determining sets are not uniquely determined. For each non-boundary edge~$\Gamma^{(j)}$, $j=1,2,\ldots,E$, we select a minimal determining 
set $M^{(j)}$ of the spline coefficients $\{b_{\ab{i}}^{(\ell)} \}_{(\ell,\ab{i}) \in \mathcal{I}_{\Gamma^{(j)}}}$ with respect to the homogeneous linear 
system~\eqref{eq:nullspace} with the additional constraints $b_{\ab{i}}^{(\ell)}=0$ for $(\ell,\ab{i}) \in \mathcal{I} \setminus \mathcal{I}_{\Gamma^{(j)}}$. 
We denote by $\mathcal{I}_{M^{(j)}} \subset \mathcal{I}$ the space of the indices of the spline coefficients $b_{\ab{i}}^{(\ell)}$ of $M^{(j)}$. 

For each vertex~$\ab{v}^{(r)}$, $r=1,2,\ldots,V$, we define the space $V^{(k)}_{\Xi^{(r)}}$ as
\begin{equation} \label{eq:subspace_singlevertex}
V^{(k)}_{\Xi^{(r)}}= \Big\{w \in V^{(k)} | \; b_{\ab{i}}^{(\ell)} =0 \mbox{ for }(\ell,\ab{i}) \in \mathcal{I} 
\setminus (\mathcal{I}_{\Xi^{(r)}} \cup \bigcup_{j: v^{(r)} \in \Gamma^{(j)}} (\mathcal{I}_{\Gamma^{(j)}}\setminus \mathcal{I}_{M^{(j)}} ) ) \Big\}.
\end{equation}
These spaces are used to define the space $V^{(k)}_{\Xi} \subset V^{(k)}$ as their direct sum, i.e., 
\begin{equation} \label{eq:space_Xi}
V^{(k)}_{\Xi} = \bigoplus_{r \in \{1,2,\ldots,V\}} V_{\Xi^{(r)}}^{(k)}.
\end{equation}
Clearly, we obtain that
\begin{equation}  \label{eq:thmIntersection2}
V^{(k)}_{\Xi} \cap (V^{(k)}_{\Omega} \cup V^{(k)}_{\Gamma} ) = \{0\},
\end{equation}
and assumption $k \geq 7-d$ guarantees that
\begin{equation}  \label{eq:thmUnion}
V^{(k)} = V^{(k)}_{\Omega} \cup V^{(k)}_{\Gamma} \cup V^{(k)}_{\Xi}.
\end{equation}
By \eqref{eq:thmIntersection1}, \eqref{eq:thmIntersection2} and \eqref{eq:thmUnion}, we have proved \eqref{eq:thm:decomposition}.
The dimension of $V^{(k)}_{\Xi^{(r)}}$, $r=1,2,\ldots,V$, will be computed in the following section.

%\textcolor{red}{(Maybe some more information via words -- or maybe in Section 4.1)}

%%%%%%%%%%%%%%%%%%%%%%%%%%%%%%%%%%%%%%%%%%%%%%%%%%%%%%%%%%%%%%%%%%%%%%%%%%%%%%%%%%%%
\section{Dimension of the vertex space} \label{sec:dimension}
%%%%%%%%%%%%%%%%%%%%%%%%%%%%%%%%%%%%%%%%%%%%%%%%%%%%%%%%%%%%%%%%%%%%%%%%%%%%%%%%%%%

The goal of this section is to compute the dimension of $V^{(k)}_{\Xi}$ via the dimension of the single subspaces $V^{(k)}_{\Xi^{(r)}}$, $r=1,2,\ldots,V$.

\subsection{Basic settings} 

For simplicity we will use the following local labelling for each vertex $\ab{v}^{(r)}$, $r=1,2,\ldots,V$. We relabel the considered vertex~$\ab{v}^{(r)}$ 
by $\ab{v}^{(0)}$. We denote the valency of $\ab{v}^{(0)}$ by $\nu$ and define $\nu'$ as $\nu$ if $\ab{v}^{(0)}$ is an inner vertex and as $\nu-1$ if $\ab{v}^{(0)}$ is a 
boundary vertex. Recall that $\nu \geq 3$. In addition, we label the patches $\Omega^{(j)}$, which contain the vertex $\ab{v}^{(0)}$, in counterclockwise order 
as $\Omega^{(\nu-\nu')}, \ldots, \Omega^{(\nu-1)}$, and denote by $\Omega_{\ab{v}^{(0)}}$ the subdomain obtained by these patches, i.e.,
\begin{equation*} \label{eq:v0subdomain}
\Omega_{\bfm{v}^{(0)}} :=  \bigcup_{j=\nu-\nu'}^{\nu-1} \Omega^{(j)},
\end{equation*}
see Fig.~\ref{fig:labeling}. Furthermore, the remaining vertices of $\Omega_{\bfm{v}^{(0)}}$ are labeled in counterclockwise order as 
$\bfm{v}^{(1)}, \bfm{v}^{(2)}$, $\ldots$,$\bfm{v}^{(\nu)}$ %, where $\bfm{v}^{(r)} = (p_r,q_r)^T$ 
(see Fig.~\ref{fig:labeling}). 
Each subdomain $\Omega^{(j)}$, $j>0$, is therefore defined by vertices $\bfm{v}^{(0)}, \bfm{v}^{(j)}, \bfm{v}^{(j+1)}$ and $\widetilde{\bfm{v}}^{(j)}$, where
\begin{equation}  \label{eq:boundaryVertexVal2}
\widetilde{\bfm{v}}^{(j)} := (\tilde{p}_j, \tilde{q}_j)^T
\end{equation}
is either a boundary vertex of valency $2$ or another inner vertex in $\Omega$.
Additionally in case of an inner vertex $\ab{v}^{(0)}$, $\Omega^{(0)}$ is defined by $\bfm{v}^{(0)}, \bfm{v}^{(\nu)}, \bfm{v}^{(1)}$ and $\widetilde{\bfm{v}}^{(\nu)}$.
Spline coefficients $b_{\ab{i}}^{(\ell)}$ with $(\ell,\ab{i}) \in \mathcal{I}_{\Xi^{(0)}}$ will be relabelled in rings around $\bfm{v}^{(0)}$. Let us present this
labelling only for the case of inner vertices, since it works analogously for a boundary vertex only starting from the patch~$\Omega^{(1)}$. For simplicity, we 
consider/draw each spline coefficient corresponding to a common edge~$\Gamma^{(j)}$ only once, although the coefficient could occur more often in the index space. 
More precisely, we denote by $b_0$ the spline coefficient, which corresponds to $\bfm{v}^{(0)}$. Further, we start to label coefficients in each ring around 
$\bfm{v}^{(0)}$ in the middle of $\Omega^{(0)}$. Therefore, the first ring consists of the coefficients $b_1, b_2,\ldots,b_{2\nu}$, while the second one consists of 
$b_{2\nu+1},b_{2\nu+2}, \ldots,b_{6 \nu}$. Let us denote by $\Xi^{(0)}$ these two rings of spline coefficients together with $b_0$.
Moreover, we split $\Xi^{(0)}$ into the union of two disjoint sets $\Xi_1^{(0)}$ and $\Xi_2^{(0)}$ as follows (see Fig.~\ref{fig:labeling}):
\begin{align*} \label{Xi_1_and_2}
 &\Xi_1^{(0)} := \{ b_0 , b_1, b_2,\ldots b_{2\nu}\} \cup \bigcup_{j=0}^{\nu-1}\{ b_{2 \nu+3+4j}\}, \\
 &  \Xi_2^{(0)} := \{ b_{2\nu+1}, \,b_{2\nu+2}, \,b_{6 \nu}  \} \cup \bigcup_{j=0}^{\nu-2} \{ b_{2\nu+4j+4}, \, b_{2\nu+4j+5}, \, b_{2\nu+4j+6} \}.
\end{align*}

\begin{figure}[htb]\centering
\includegraphics[width=10cm]{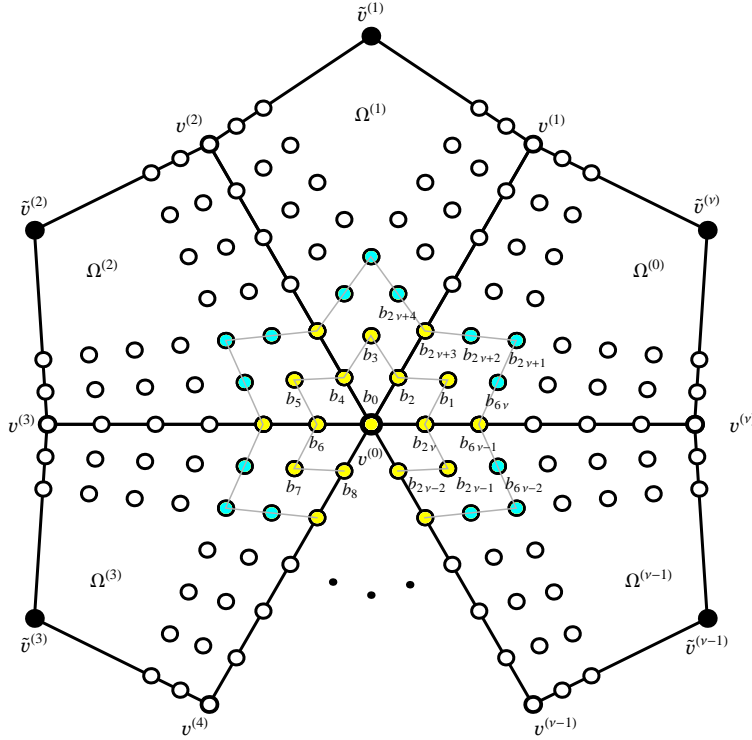}
\caption{Labeling of spline coefficients, vertices and patches in the domain $\Omega_{\bfm{v}^{(0)}}$ with an inner vertex $\bfm{v}^{(0)}$. Coefficients corresponding to 
$\Xi_1^{(0)}$ and $\Xi_2^{(0)}$ are colored in yellow and cyan, respectively. In case of a boundary vertex $\ab{v}^{(0)}$, the labeling works analogously starting 
from $\Omega^{(1)}$.}
\label{fig:labeling}
\end{figure}

In the following, the indices $(j-1,j)$ for the two-patch domains $\Omega^{(j-1,j)}$ are always considered modulo $\nu$.
For each two-patch domain $\Omega^{(j-1,j)}$, $j=\nu-\nu'+1,\ldots,\nu'$, vertex $\bfm{v}^{(0)}$ locally corresponds to the vertex $\bfm{v}_{\Omega^{(j-1, j)}}^{(2)}$.
Furthermore,  we denote by $\Xi^{(j-1,j)}$ the subset of 
$\Xi^{(0)}$ restricted to the two-patch domain $\Omega^{(j-1,j)}$, and by $V^{(k)}_{\Xi^{(j-1,j)}}$ the vertex subspace~\eqref{eq:subspace_singlevertex} at the 
vertex~$\ab{v}^{(0)}$
for the domain $\Omega$ restricted to the two-patch domain $\Omega^{(j-1,j)}$.

\begin{lem} \label{lem:dim_boundary_twopatch}
 Let $\Omega^{(j-1,j)}$ be the two-patch subdomain in $\Omega$. Then 
 \[
  \dim V^{(k)}_{\Xi^{(j-1,j)}} = 
  \begin{cases}
   9, \quad \;\, \mbox{if } \bfm{v}_{\Omega^{(j-1, j)}}^{(0)}, \, \bfm{v}_{\Omega^{(j-1, j)}}^{(2)}, \, \bfm{v}_{\Omega^{(j-1, j)}}^{(4)} 
   \mbox{ are collinear},\\
   11, \quad \mbox{otherwise}.
  \end{cases}
 \]
\end{lem}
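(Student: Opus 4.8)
The plan is to reduce the statement to the single--vertex computation of the two--patch case in \cite{KaplVitrih2016}, which already appeared implicitly in the proof leading to~\eqref{eq:dim2}. Recall that the vertex $\bfm{v}^{(0)}$ corresponds locally to the shape point $\bfm{v}^{(2)}_{\Omega^{(j-1,j)}}$, lying at the edge parameter $\xi=0$. By~\eqref{eq:subspace_singlevertex}, a function in $V^{(k)}_{\Xi^{(j-1,j)}}$ is $C^2$ on $\Omega^{(j-1,j)}$ and carries nonzero B--spline coefficients only on $\Xi^{(j-1,j)}$ together with the non--MDS coefficients $\mathcal{I}_{\Gamma}\setminus\mathcal{I}_{M}$ of the common edge, while the minimal determining set itself is annihilated. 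First I would observe that, since $M$ is a determining set, the $G^2$--relations~\eqref{eq:480},~\eqref{eq:exampleorder1},~\eqref{eq:exampleorder2} express every non--MDS edge coefficient as a linear combination of the MDS coefficients and the neighbouring corner coefficients; setting the MDS to zero therefore turns each such coefficient into a fixed linear function of the corner coefficients clustered at $\bfm{v}^{(2)}_{\Omega^{(j-1,j)}}$. Hence the free parameters of $V^{(k)}_{\Xi^{(j-1,j)}}$ are exactly these corner coefficients, and $\dim V^{(k)}_{\Xi^{(j-1,j)}}$ equals their number once the remaining corner $G^2$--conditions are imposed.

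Next I would count these corner degrees of freedom on the two $3\times3$ blocks meeting at $\bfm{v}^{(2)}_{\Omega^{(j-1,j)}}$. This is precisely what \cite[Lemma~7 and~8]{KaplVitrih2016} accomplish: the very same lemmas were invoked in the proof leading to~\eqref{eq:dim2}, where annihilating the two endpoint vertices of $\Gamma^{(j)}$ reduced the dimension by $9\tilde\mu+11(2-\tilde\mu)$, that is, by $9$ for each collinear endpoint and by $11$ for each non--collinear one. Applying them to the single endpoint $\bfm{v}^{(2)}_{\Omega^{(j-1,j)}}$ shows that the number of surviving corner coefficients equals $9$ in the degenerate case and $11$ in the generic case.

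Finally I would match the degeneracy with the asserted collinearity. At $\xi=0$ the three point triplets in~\eqref{eq:triples} specialise to $\bfm{v}^{(0)}_{\Omega^{(j-1,j)}},\bfm{v}^{(2)}_{\Omega^{(j-1,j)}},\bfm{v}^{(4)}_{\Omega^{(j-1,j)}}$, and for bilinear geometry mappings a short evaluation of~\eqref{eq:AlphaBetaGamma} shows that $\alpha(0)=0$ holds exactly when these three points are collinear (compare \cite[Lemma~9]{KaplVitrih2016}). This is precisely the degenerate case of the previous step, whence $\dim V^{(k)}_{\Xi^{(j-1,j)}}=9$ when $\bfm{v}^{(0)}_{\Omega^{(j-1,j)}},\bfm{v}^{(2)}_{\Omega^{(j-1,j)}},\bfm{v}^{(4)}_{\Omega^{(j-1,j)}}$ are collinear and $11$ otherwise.

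The main obstacle is the localisation in the first paragraph: one has to verify that, after the minimal determining set is annihilated, the non--MDS edge coefficients introduce no freedom of their own and that the corner $G^2$--conditions at $\bfm{v}^{(2)}_{\Omega^{(j-1,j)}}$ decouple from the rest of the edge, so that the two--patch single--vertex count of \cite[Lemma~7 and~8]{KaplVitrih2016} transfers verbatim to $V^{(k)}_{\Xi^{(j-1,j)}}$. Once this decoupling is established, the $9$--versus--$11$ dichotomy and its dependence on $\alpha(0)$ follow immediately from the cited two--patch results.
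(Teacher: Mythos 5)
Your proposal is correct and follows essentially the same route as the paper: the paper's proof simply notes that $|\Xi^{(j-1,j)}|=15$ and that, by \cite[Lemma 7 and 8]{KaplVitrih2016}, the number of linearly independent equations of the system~\eqref{eq:nullspace} supported on $\Xi^{(j-1,j)}$ is six in the collinear case and four otherwise, giving $15-6=9$ and $15-4=11$. Your additional bookkeeping with the minimal determining set and the identification of the degeneracy with $\alpha(0)=0$ is consistent with, but not needed beyond, this direct count.
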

\begin{pf}
By \cite[Lemma 7 and 8]{KaplVitrih2016} the number of linearly independent equations, which are formed 
 from the equations of the homogeneous linear system~\eqref{eq:nullspace}, and which are defined on $\Xi^{(j-1,j)}$ only is six if the vertices 
 $\bfm{v}_{\Omega^{(j-1, j)}}^{(0)}, \, \bfm{v}_{\Omega^{(j-1, j)}}^{(2)}, \, \bfm{v}_{\Omega^{(j-1, j)}}^{(4)}$ 
 are collinear, and four otherwise. Using also the fact that $|\Xi^{(j-1,j)}|=15$, the lemma is proved.  
 \qed
\end{pf}

For each two-patch domain $\Omega^{(j-1,j)}$, let us denote by $\ab{e}^{\Xi^{(j-1,j)}}$ the set of the four or six linearly independent equations, which are formed 
from the equations of the homogeneous linear system~\eqref{eq:nullspace}, and which are defined on $\Xi^{(j-1,j)}$ only (see \cite[Equations (18), (28) and (29)]{KaplVitrih2016}). 
Furthemore, let $\ab{e}^{\Xi^{(0)}}$ be the set of the linear equations obtained by the union of all sets 
$\ab{e}^{\Xi^{(j-1,j)}}$, i.e., 
\begin{equation}  \label{eq:eXi0}
\ab{e}^{\Xi^{(0)}} = \bigcup_{j=\nu - \nu' + 1}^{\nu'} \ab{e}^{\Xi^{(j-1,j)}}.
\end{equation}

The following observation provides us a way to prove the dimension of $V^{(k)}_{\Xi^{(0)}}$. Recall that $V^{(k)}_{\Xi^{(0)}}$ is the vertex 
subspace~\eqref{eq:subspace_singlevertex} at the vertex $\ab{v}^{(0)}$.  
\begin{obs}
The dimension of $V^{(k)}_{\Xi^{(0)}}$ is equal to the number of degrees of freedom for the coefficients in $\Xi^{(0)}$ with respect to the system of equations \eqref{eq:eXi0}. 
\end{obs}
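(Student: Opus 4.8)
The plan is to identify $\dim V^{(k)}_{\Xi^{(0)}}$ with the dimension of the solution set of the finite system $\ab{e}^{\Xi^{(0)}}$ on the coefficients of $\Xi^{(0)}$, by showing that every remaining free coefficient of $V^{(k)}_{\Xi^{(0)}}$ is slaved to those in $\Xi^{(0)}$. By \eqref{eq:subspace_singlevertex} a function $w \in V^{(k)}_{\Xi^{(0)}}$ can have nonzero coefficients only in $\mathcal{I}_{\Xi^{(0)}} \cup \bigcup_{j:\, \ab{v}^{(0)} \in \Gamma^{(j)}} (\mathcal{I}_{\Gamma^{(j)}} \setminus \mathcal{I}_{M^{(j)}})$; here $\mathcal{I}_{\Xi^{(0)}}$ is exactly the index set of $\Xi^{(0)}$, it is disjoint from every edge strip $\mathcal{I}_{\Gamma^{(j)}}$ (the strips omit the three coefficients closest to each endpoint, which near $\ab{v}^{(0)}$ belong to $\Xi^{(0)}$), the strips of distinct edges at $\ab{v}^{(0)}$ are mutually disjoint, and both the coefficients in $\mathcal{I}_{M^{(j)}}$ and the corner blocks at the opposite endpoints of these edges are forced to vanish. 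First I would record these facts and then analyse the linear restriction map $\rho$ sending $w$ to its coefficient vector on $\Xi^{(0)}$.

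Next I would check that $\rho$ takes values in the solution set of $\ab{e}^{\Xi^{(0)}}$: by \eqref{eq:eXi0} these equations are obtained from \eqref{eq:nullspace} and involve only coefficients of $\Xi^{(0)}$, so they hold for every $w \in V^{(k)}_{\Xi^{(0)}}$. For injectivity of $\rho$ I would argue edge by edge. Fix an edge $\Gamma^{(j)}$ at $\ab{v}^{(0)}$ and suppose two functions of $V^{(k)}_{\Xi^{(0)}}$ share the same $\Xi^{(0)}$-data; their difference has vanishing coefficients in $\Xi^{(0)}$, on $\mathcal{I}_{M^{(j)}}$, and at the far endpoint, hence lies in the edge space $V^{(k)}_{\Gamma^{(j)}}$ with all its minimal-determining-set coefficients equal to zero. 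By the defining property of $M^{(j)}$ this forces all the strip coefficients of $\Gamma^{(j)}$ to vanish. Since this holds for each edge and the strips are disjoint, the two functions coincide, so $\rho$ is injective.

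Then I would prove surjectivity onto the solution set of $\ab{e}^{\Xi^{(0)}} = \bigcup_j \ab{e}^{\Xi^{(j-1,j)}}$ by completing a given admissible $\Xi^{(0)}$-vector edge by edge. On each two-patch piece $\Omega^{(j-1,j)}$ the situation is exactly the one analysed for Lemma~\ref{lem:dim_boundary_twopatch}: the equations of \eqref{eq:nullspace} that involve the strip of $\Gamma^{(j)}$, with $\mathcal{I}_{M^{(j)}}$ and the far block set to zero, are solvable for the non-$M^{(j)}$ strip coefficients precisely when the $\Xi^{(j-1,j)}$-data satisfies the strip-free equations $\ab{e}^{\Xi^{(j-1,j)}}$, which by hypothesis it does; the solution is moreover unique by the injectivity just shown. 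Because the strips of different edges are disjoint and any two adjacent two-patch pieces overlap only inside the shared corner block contained in $\Xi^{(0)}$, these per-edge completions do not interfere and glue to a single coefficient vector; all constraints of \eqref{eq:nullspace} away from $\ab{v}^{(0)}$ involve only coefficients that are already zero and are trivially met. The resulting $w$ lies in $V^{(k)}_{\Xi^{(0)}}$ with $\rho(w)$ equal to the prescribed $\Xi^{(0)}$-vector. Hence $\rho$ is a linear isomorphism onto the solution set of $\ab{e}^{\Xi^{(0)}}$, and $\dim V^{(k)}_{\Xi^{(0)}}$ equals the number of degrees of freedom for the coefficients in $\Xi^{(0)}$ with respect to $\ab{e}^{\Xi^{(0)}}$.

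I expect the main obstacle to be the per-edge solvability statement underlying surjectivity, namely that $\ab{e}^{\Xi^{(j-1,j)}}$ really captures every compatibility condition the strip elimination imposes on $\Xi^{(j-1,j)}$, so that no hidden constraint on $\Xi^{(0)}$ escapes the count. This is where the explicit independence analysis of \cite[Lemma 7 and 8]{KaplVitrih2016} and the precise equations of \cite[Equations (18), (28) and (29)]{KaplVitrih2016} enter, together with the disjointness of the edge strips that keeps the per-edge arguments — and hence the assembly via $\bigcup_j \ab{e}^{\Xi^{(j-1,j)}}$ — independent.
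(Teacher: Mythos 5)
Your argument is correct and is exactly the reasoning the paper leaves implicit: the Observation is stated without proof, and its justification rests precisely on the ingredients you invoke, namely the disjointness of the edge strips from each other and from $\Xi^{(0)}$, the minimal determining sets $M^{(j)}$ (which give uniqueness of the strip extension, hence injectivity of your restriction map), and the per-edge solvability encoded in Lemma~\ref{lem:dim_boundary_twopatch} via \cite[Lemma 7 and 8]{KaplVitrih2016} (which gives surjectivity onto the solution set of $\ab{e}^{\Xi^{(0)}}$). The only cosmetic slip is the phrase that the difference function ``lies in $V^{(k)}_{\Gamma^{(j)}}$'' --- it need not, since it may be nonzero on other strips --- but your subsequent appeal to the localization of the $G^2$ constraints to the three columns along each edge repairs this immediately.
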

We count these degrees of freedom in two steps:
\begin{itemize}
\item[1.] We compute the number of degrees of freedom for the coefficients in $\Xi_1^{(0)}$.
\item[2.] Then we set all coefficients in $\Xi_1^{(0)}$ to zero and compute the number of degrees of freedom for the coefficients in $\Xi_2^{(0)}$.
\end{itemize}
The first step leads to the following result.
\begin{lem}  \label{thm:Xi_1}
   The number of degrees of freedom for $\Xi_1^{(0)}$ with respect to the system of equations \eqref{eq:eXi0} is equal to six.
\end{lem}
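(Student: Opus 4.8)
The plan is to recognize the six degrees of freedom as the dimension of the space $\Pi_2$ of bivariate polynomials of total degree at most two, i.e.\ the $2$-jets of a $C^2$ function at $\ab{v}^{(0)}$, since $\dim\Pi_2=6$. Concretely, I would show that, modulo the equations $\ab{e}^{\Xi^{(0)}}$ and after eliminating the coefficients of $\Xi_2^{(0)}$, the coefficients in $\Xi_1^{(0)}$ are in linear bijection with the value, gradient and Hessian of the isogeometric function at the vertex. Throughout, the $\Xi_2^{(0)}$ coefficients are treated as free unknowns that absorb the equations in which they occur, so that the only binding constraints on $\Xi_1^{(0)}$ are those surviving the elimination of $\Xi_2^{(0)}$.

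First I would isolate the value and the gradient. The coefficient $b_0$ supplies the value at $\ab{v}^{(0)}$ (one degree of freedom). The lowest-order $G^1$ equation of each two-patch domain $\Omega^{(j-1,j)}$ couples $b_0$ with the three consecutive ring-one edge coefficients on $\Gamma^{(j-1)},\Gamma^{(j)},\Gamma^{(j+1)}$; assembled over $j$ these form a cyclic system in $b_0$ and the $\nu$ edge coefficients. Identifying each difference (edge coefficient minus $b_0$) with the directional derivative $\ab{t}_m\cdot\nabla w$ along the corresponding edge tangent $\ab{t}_m$, the solution set of this system is exactly $\{(b_0,\nabla w)\}$. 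Since the $\nu$ tangents span the plane, the $\nu$ equations have rank $\nu-2$, leaving $1+\nu-(\nu-2)=3$ degrees of freedom.

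Next I would account for the second-order data. Written in their B-spline-coefficient form, the remaining equations of $\ab{e}^{\Xi^{(0)}}$ (the higher-order $G^1$ relations and the $G^2$ relations) involve the $\nu$ ring-one interior coefficients and the selected ring-two coefficients $b_{2\nu+3+4j}\in\Xi_1^{(0)}$ together with the coefficients of $\Xi_2^{(0)}$. I would eliminate the $\Xi_2^{(0)}$ unknowns from these relations; the surviving equations assert that the per-patch second-order data, pulled back through the regular bilinear maps $\ab{G}^{(m)}$ and with the gradient already fixed, agree as a single physical Hessian at $\ab{v}^{(0)}$. As a Hessian is a symmetric $2\times2$ form, this contributes the remaining three degrees of freedom. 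Adding the blocks gives $3+3=6$; equivalently the constraints on the $1+3\nu$ coefficients of $\Xi_1^{(0)}$ have rank $3\nu-5$.

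The hard part will be the two cyclic rank computations and, above all, proving that the total is independent of the valency $\nu$ and uniform over the admissible configurations. For the gradient block this is the rank of a circulant-type $\nu\times(\nu+1)$ matrix assembled from $\alpha,\beta,\gamma$ at the vertex, which I would evaluate via its cyclic symmetry using that the edge tangents are pairwise noncollinear (a consequence of the strict convexity of the patches). For the Hessian block the elimination of $\Xi_2^{(0)}$ must be shown to be always feasible with the same corank; here the four-versus-six alternative of Lemma~\ref{lem:dim_boundary_twopatch} must be reconciled, as the two additional equations arising in the collinear case have to be absorbed entirely by $\Xi_2^{(0)}$ so that the count for $\Xi_1^{(0)}$ is unaffected. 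Establishing this cancellation uniformly, rather than configuration by configuration, is the principal obstacle.
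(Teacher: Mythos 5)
Your guiding intuition --- that the six degrees of freedom are exactly the $2$-jet $(w,\nabla w,\nabla^2 w)$ at $\ab{v}^{(0)}$, i.e.\ $\dim \Pi_2=6$ --- is precisely the idea behind the paper's proof. But the route you propose to certify it is not carried to completion, and the steps you leave open are exactly the hard ones. You plan to assemble the equations of $\ab{e}^{\Xi^{(0)}}$ into a ``gradient block'' and a ``Hessian block'', eliminate the $\Xi_2^{(0)}$ unknowns, and compute the ranks of the resulting cyclic systems. You assert (not prove) that the gradient block has rank $\nu-2$; the matrix is not genuinely circulant, since its entries depend on the varying quantities $\psi_{i,j}$ around the vertex, so ``cyclic symmetry'' does not by itself give the rank. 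More seriously, you yourself identify as the ``principal obstacle'' the need to show that the corank is uniform over all valencies and over all configurations, including the collinear case where each affected two-patch domain contributes six rather than four equations and the two extra ones must be absorbed entirely by $\Xi_2^{(0)}$. Since the lemma's whole content is that the count is $6$ independently of $\nu$ and of the configuration, a proof that defers exactly this uniformity is not a proof.

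The paper avoids every one of these computations with a two-sided argument you should compare against. Upper bound: the coefficients in $\Xi_1^{(0)}$ are precisely the corner coefficients of total index $i_1+i_2\le 2$ in each patch, and these are uniquely determined by the value, the two first and the three second derivatives of $w$ at $\ab{v}^{(0)}$; hence at most six degrees of freedom, with no rank computation and no case distinction. Lower bound (realizability, which your sketch also leaves implicit): every $2$-jet is attained by the quadratic polynomial $p$ it determines; $p$ is globally $C^2$, its graph over each bilinear patch has total degree at most four, so $p\in V^{(k)}$ for $d\in\{5,6\}$, and the function obtained by keeping $p$'s spline coefficients on $\Xi^{(0)}$ and zeroing the rest satisfies the system $\ab{e}^{\Xi^{(0)}}$. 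If you want to salvage your linear-algebra route you must actually exhibit the nonsingular minors and prove the absorption claim in the collinear case; as written, the argument has a genuine gap.
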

\begin{proof}
Let us prove first that the number is at most six. The interpolation of six independent $C^{2}$ data at $\bfm{v}^{(0)}$, i.e., $6$-tuples consisting of the value, the 
two first derivatives and the three second derivatives, uniquely determines all coefficients in $\Xi_1^{(0)}$.
It remains to show that we can indeed always interpolate $C^2$ data at $\bfm{v}^{(0)}$ with a function in $V^{(k)}$, which has zero values for the degrees of 
freedom outside of $\Xi^{(0)}$. The six independent interpolation data uniquely determine a biquadratic polynomial $p$ on $\Omega$, since $p$ is defined by exactly 
six coefficients. Polynomial $p$ is clearly $C^2$ on $\Omega$. Since graph of $p$ is of total degree at most four, $p$ clearly belongs to $V^{(k)}$. Now we take 
its spline representation and choose a function having the same coefficients as $p$ on $\Xi^{(0)}$, while the remaining degrees of freedom outside of $\Xi^{(0)}$ we 
set to zero. The obtained function has the desired properties. It belongs to $V^{(k)}$ and has vanishing degrees of freedom outside of $\Xi^{(0)}$.
\end{proof}
Recall \eqref{eq:shape_points}. To simplify the notation we define
\begin{equation*}  \label{eq:psi}
 \psi_{i,j} := p_i q_j - p_j q_i, \quad i,j \in \{1,2,\ldots,\nu\}.
\end{equation*}
Clearly $\psi_{i,j}= - \psi_{j,i}$. Additionally let $\psi_{0,j} := \psi_{\nu,j}$ and $\psi_{\nu+1,j} := \psi_{1,j}$ for $j=1,2,\ldots,\nu$. 
Note that collinear vertices  $\bfm{v}^{(0)}$, $\bfm{v}^{(i)}$ and $\bfm{v}^{(j)}$ imply $\psi_{i,j} = 0$. 

To perform the second step we first assume that all coefficients in 
$\Xi^{(0)}_{1}$ are set to zero. Then for each two-patch domain $\Omega^{(j-1,j)}$, $j=\nu - \nu' +1,\ldots,\nu'$,
three equations in $\ab{e}^{\Xi^{(j-1,j)}}$ become trivially fulfilled. In the case of non-collinear vertices 
$\bfm{v}_{\Omega^{(j-1, j)}}^{(0)}, \, \bfm{v}_{\Omega^{(j-1, j)}}^{(2)}, \, \bfm{v}_{\Omega^{(j-1, j)}}^{(4)}$
we are left with only one nonzero equation, which we denote by $e^{(j)}$, 
\begin{equation}  \label{eq:support_equation}
  e^{(j)} = e^{(j)}_1 \, b_{2 \nu+4j-4} + e^{(j)}_2 \, b_{2 \nu+4j-2} + e^{(j)}_3 \, b_{2 \nu+4j} + e^{(j)}_4 \, b_{2 \nu+4j+2} = 0, 
\end{equation}
where
\begin{align}  
   & e^{(j)}_1 = \psi_{j,j+1}^3, \quad 
    e^{(j)}_2 = \psi_{j,j+1}^2\, \psi_{j+1,j-1}, \; \nonumber\\[-0.25cm]
    & \label{eq:entries_equation} \\[-0.25cm]
   & e^{(j)}_3 = - \psi_{j-1,j} \,\psi_{j,j+1} \, \psi_{j+1,j-1},\;  \quad
    e^{(j)}_4 = - \psi_{j-1,j}^2 \, \psi_{j,j+1}.\nonumber
\end{align}
In the case that the three vertices 
$\bfm{v}_{\Omega^{(j-1, j)}}^{(0)}, \, \bfm{v}_{\Omega^{(j-1, j)}}^{(2)}, \, \bfm{v}_{\Omega^{(j-1, j)}}^{(4)}$ are collinear, we are left with three nonzero 
equations. These equations are denoted by $\tilde{e}^{(j)}, \bar{e}^{(j)}$, $\hat{e}^{(j)}$ and are of the form
\begin{align}
    & \tilde{e}^{(j)} = \frac{1}{2} \psi_{j,j+1} \, b_{2\nu+4j-2} + \frac{1}{2} \psi_{j-1,j} \, b_{2\nu+4 j} = 0, \nonumber \\
    & \bar{e}^{(j)} = \psi_{j-1,j}^2 \, \psi_{j+1,j} \, b_{2\nu+4j +2} + \psi_{j,j+1}^3 \, b_{2\nu+4j-4} = 0,\qquad   \label{eq:support_equation2} \\[0.1cm]
    & \hat{e}^{(j)} = \left( 3(p_j \tilde{q}_j - \tilde{p}_j q_j) + 7 \psi_{j+1,j}\right) \psi_{j+1,j}^2 \, b_{2\nu+4j-4}  
    -2 \psi_{j+1,j}^3 \, b_{2\nu+4j-3} +  4 \psi_{j+1,j}^3 \, b_{2\nu+4j-2} \nonumber \\
    & \qquad \;  + 2 \psi_{j,j-1}^2 \psi_{j+1,j} \, b_{2\nu+4j+1}+
    C_1^{(j)} \, b_{2\nu+4 j} + C_2^{(j)} \, b_{2\nu+4 j+ 2} = 0,  \nonumber
\end{align}
where $C_1^{(j)}$ and $C_2^{(j)}$ are two constants depending only on shape points of the two-patch domain $\Omega^{(j-1,j)}$.
If $\ab{v}^{(0)}$ is an inner vertex, we have to replace in \eqref{eq:support_equation} and \eqref{eq:support_equation2} $b_{2\nu}$ by $b_{6 \nu}$ and 
$b_{6 \nu+i}$ by $b_{2 \nu+i}$, $i=1,2$, for $j=1$ and $j=\nu$, respectively.

The second step will be completed in the following subsections. Before, we need some notations and assumptions. 
\begin{defn} \label{def:typeVertex}
Vertex $\bfm{v}^{(0)}$ is called a \emph{type} $\rho$, $\rho \in \N_0$, vertex if 
vertices 
$\bfm{v}_{\Omega^{(j-1, j)}}^{(0)}, \, \bfm{v}_{\Omega^{(j-1, j)}}^{(2)},$ 
and $\bfm{v}_{\Omega^{(j-1, j)}}^{(4)}$
are collinear exactly for $\rho$ two-patch domains $\Omega^{(j-1,j)} \subseteq \Omega_{\bfm{v}^{(0)}}$. 
\end{defn}
\begin{rem}  \label{rem:types}
By geometry reasons there exist only type 0, type 1, type 2 and for $\nu=4$ also type 4 vertices. 
Furthermore, for $\nu > 4$ in the case of a type 2 vertex, the two two-patch domains which define the type 2 vertex need to have one patch in common. 
Compare Fig.~\ref{fig:examples_types0}, Fig.~\ref{fig:examples_types12} and Fig.~\ref{fig:examples_typesVal4}.
\end{rem}
\begin{figure}[htb]\centering
\begin{minipage}[]{5.5cm}
\includegraphics[width=4cm]{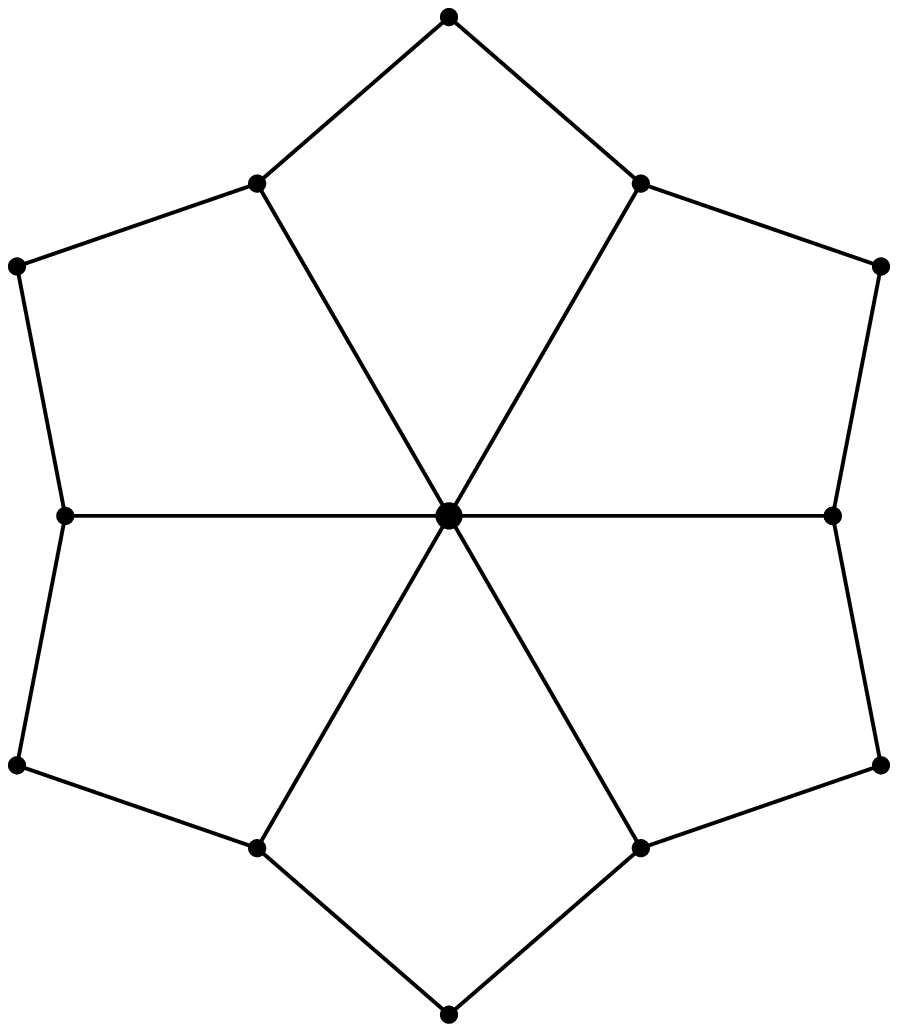}
\end{minipage}
\hskip1em
\begin{minipage}[]{5.5cm}
\includegraphics[width=4cm]{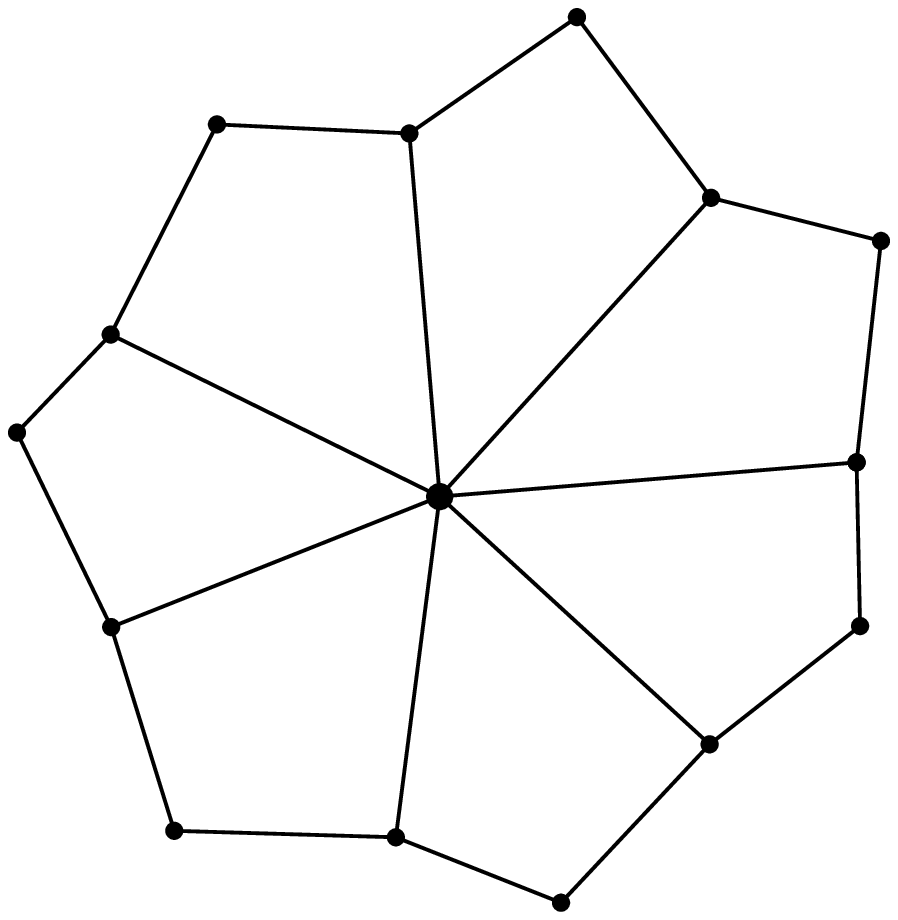}
\end{minipage}
\caption{Two examples for type 0 inner vertices.}
\label{fig:examples_types0}
\end{figure}
\begin{figure}[htb]\centering
\begin{minipage}[]{5.5cm}
\includegraphics[width=4cm]{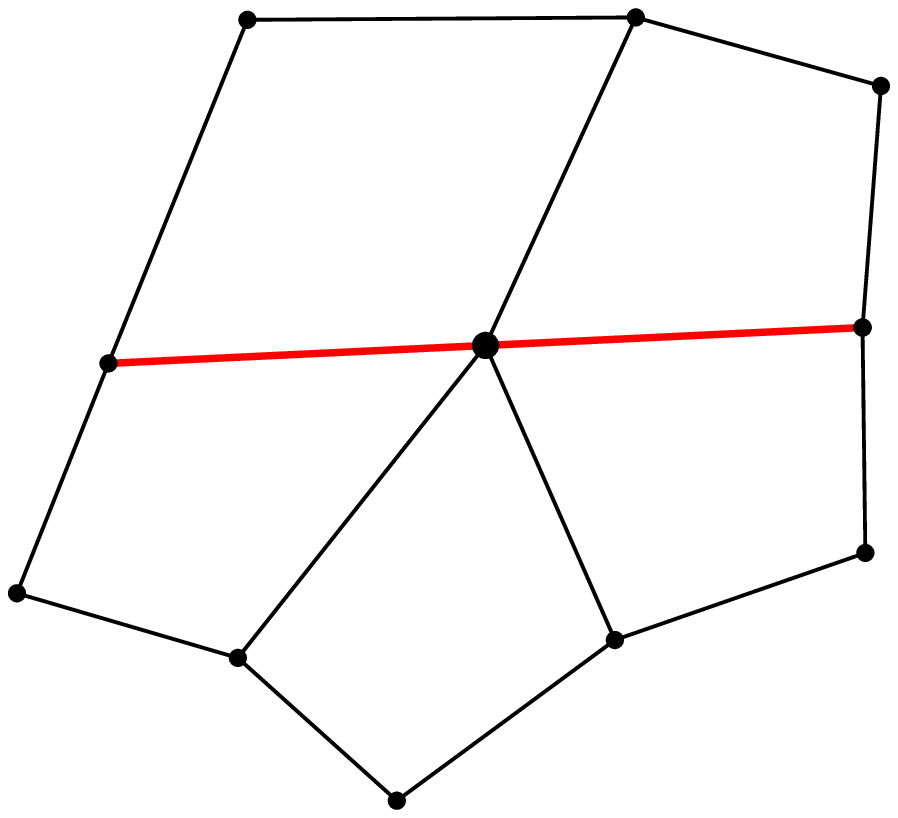}
\end{minipage}
\hskip0.5em
\begin{minipage}[]{5.5cm}
\includegraphics[width=4cm]{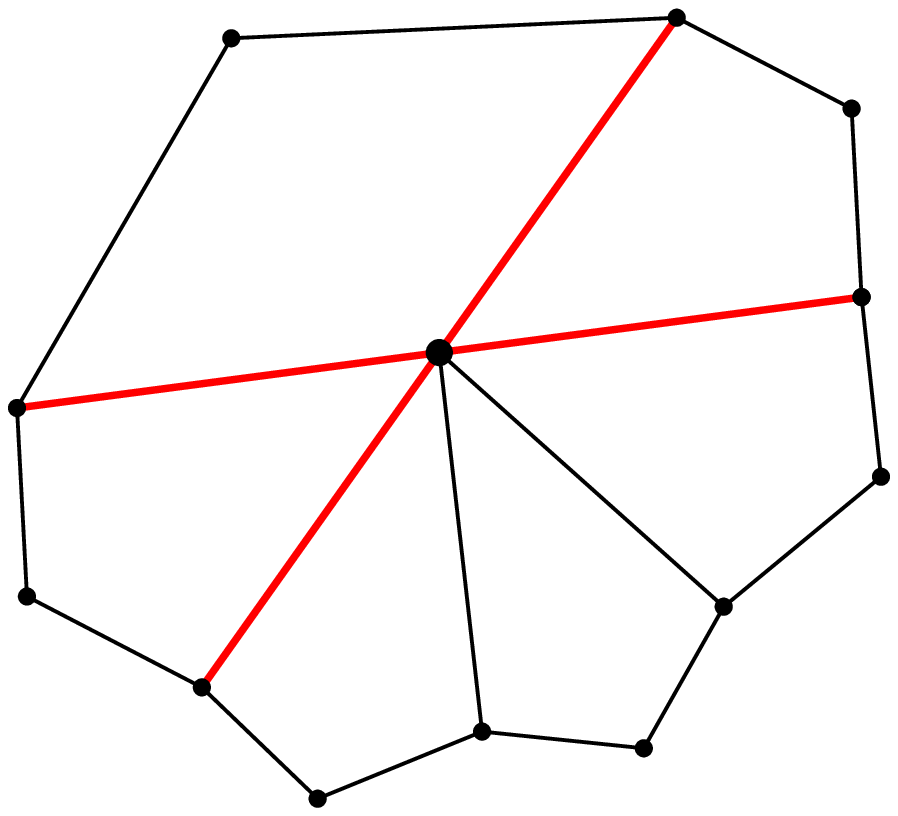}
\end{minipage}
\caption{Examples for type 1 (left) and type 2 (right) inner vertices.}
\label{fig:examples_types12}
\end{figure}
\begin{figure}[htb]\centering
\begin{minipage}[]{5.5cm}
\includegraphics[width=4cm]{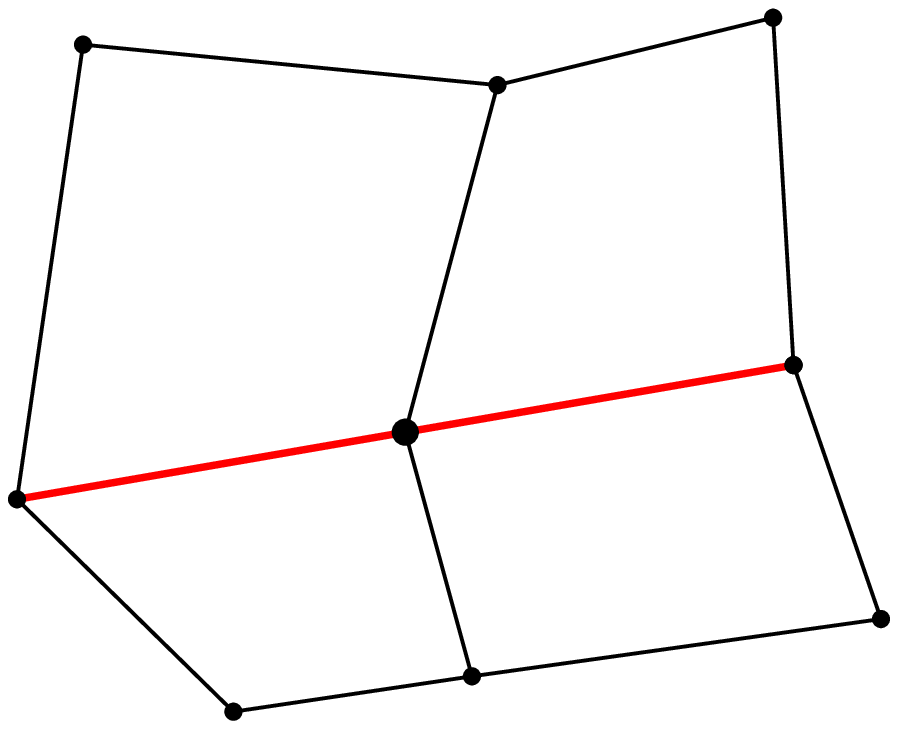}
\end{minipage}
\hskip0.5em
\begin{minipage}[]{5.5cm}
\includegraphics[width=4cm]{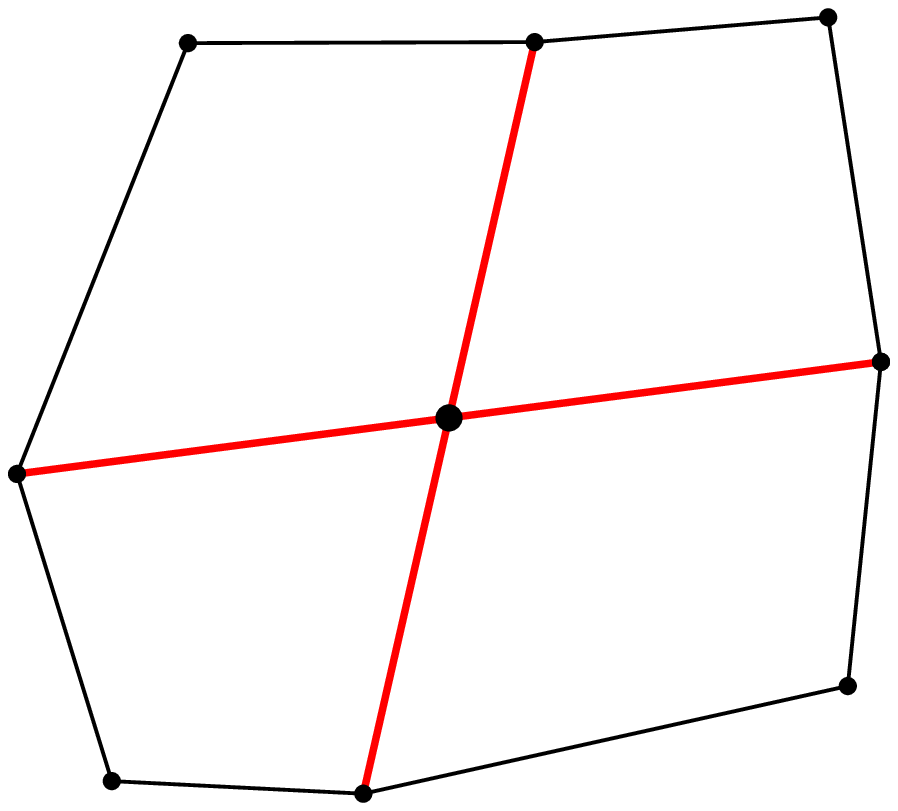}
\end{minipage}
\caption{Type 2 (left) and type 4 (right) inner vertices for valency $\nu=4$.}
\label{fig:examples_typesVal4}
\end{figure}

We will show that  $\dim V^{(k)}_{\Xi^{(0)}}$ depends on the valency $\nu$ and the type $\rho \in \{0,1,2,4\}$ of the vertex $\bfm{v}^{(0)}$. Therefore, we will use the 
notations $N_{\nu}^{{\rm (\rho)}}$ and $\widetilde{N}_{\nu}^{{\rm (\rho)}}$ to denote the dimension $\dim V^{(k)}_{\Xi^{(0)}}$ for inner and boundary vertex $\bfm{v}^{(0)}$, respectively. 
Without loss of generality we will assume that
$
\bfm{v}^{(0)} = (0,0)^T.
$ 

\subsection{Type 0 vertices}

Let $\bfm{v}^{(0)}$ be a type 0 vertex. 
Then 
$
\bfm{e}^{\Xi^{(0)}} = \{e^{(\nu - \nu' +1)},\ldots,e^{(\nu')} \}. 
$
Lemma~\ref{thm:Xi_1} and equality $| \Xi_2^{(0)} | = 3 \nu'$ yield
$
N_{\nu}^{(0)} \geq 6+3 \nu - \nu = 2\nu+6
$ 
and
$
\widetilde{N}_{\nu}^{(0)} \geq 6+3 (\nu-1) - (\nu-2) = 2\nu+5.
$ 
The lower bound is reached when all equations in $\bfm{e}^{\Xi^{(0)}}$ are linearly independent. 
In case of a boundary vertex~$\ab{v}^{(0)}$, all 
equations in $\bfm{e}^{\Xi^{(0)}}$ are clearly linearly independent and therefore $\widetilde{N}_{\nu}^{(0)} = 2\nu+5$. In case of an inner vertex only equations in 
$\bfm{e}^{\Xi^{(0)}} \backslash \{e^{(\nu)}\}$ are clearly linearly independent. Depending on the valency $\nu$, the equation $e^{(\nu)}$ is linearly dependent or 
independent with the equations in $\bfm{e}^{\Xi^{(0)}} \backslash \{e^{(\nu)}\}$, and therefore $2\nu+6 \leq  N_{\nu}^{(0)} \leq 2\nu+7$. We need the following lemma to 
compute $N_{\nu}^{(0)}$.
\begin{lem}  \label{lemma:Dj}
Let 
\begin{equation} \label{eq:Djnu}
D_j^\nu :=   \begin{bmatrix}
    e^{(j)}_3 &e^{(j+1)}_1   \\[0.2cm]
    e^{(j)}_4 & e^{(j+1)}_2
\end{bmatrix}, \quad j=1,2,\ldots,\nu-1.
\end{equation}
Then
$$
\det D_j^\nu = \psi_{j-1,j} \,  \psi_{j,j+1}^2 \, \psi_{j+1,j+2}^2 \,   \psi_{j+2,j-1}. 
$$
\end{lem}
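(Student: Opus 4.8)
The plan is to expand the $2\times 2$ determinant directly from the explicit entries given in~\eqref{eq:entries_equation} and then to collapse the result by a single algebraic identity. First I would substitute $j \mapsto j+1$ in~\eqref{eq:entries_equation} to record the four needed entries, namely $e^{(j)}_3 = -\psi_{j-1,j}\,\psi_{j,j+1}\,\psi_{j+1,j-1}$, $e^{(j)}_4 = -\psi_{j-1,j}^2\,\psi_{j,j+1}$, $e^{(j+1)}_1 = \psi_{j+1,j+2}^3$ and $e^{(j+1)}_2 = \psi_{j+1,j+2}^2\,\psi_{j+2,j}$. Writing $\det D_j^\nu = e^{(j)}_3 e^{(j+1)}_2 - e^{(j+1)}_1 e^{(j)}_4$ and multiplying out, both resulting monomials share the common factor $\psi_{j-1,j}\,\psi_{j,j+1}\,\psi_{j+1,j+2}^2$, so that
\[
\det D_j^\nu = \psi_{j-1,j}\,\psi_{j,j+1}\,\psi_{j+1,j+2}^2 \bigl( \psi_{j-1,j}\,\psi_{j+1,j+2} - \psi_{j+1,j-1}\,\psi_{j+2,j} \bigr).
\]

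It then remains to show that the bracketed factor equals $\psi_{j,j+1}\,\psi_{j+2,j-1}$. Using the antisymmetry $\psi_{i,j}=-\psi_{j,i}$ to rewrite $\psi_{j+1,j-1}\,\psi_{j+2,j} = \psi_{j-1,j+1}\,\psi_{j,j+2}$, this reduces exactly to the three-term Grassmann--Pl\"ucker relation for the four planar vertices $\bfm{v}^{(j-1)},\bfm{v}^{(j)},\bfm{v}^{(j+1)},\bfm{v}^{(j+2)}$,
\[
\psi_{j-1,j}\,\psi_{j+1,j+2} - \psi_{j-1,j+1}\,\psi_{j,j+2} + \psi_{j-1,j+2}\,\psi_{j,j+1} = 0,
\]
together with $-\psi_{j-1,j+2} = \psi_{j+2,j-1}$. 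This identity holds because each $\psi_{i,j}$ is the $2\times 2$ determinant $\det(\bfm{v}^{(i)},\bfm{v}^{(j)})$ and any three vectors in $\R^2$ are linearly dependent; it can be verified either by expanding both sides as a polynomial identity in the coordinates $p_i,q_i$, or by writing one of the four vertices as a linear combination of two others and substituting. Combining the two displays yields $\det D_j^\nu = \psi_{j-1,j}\,\psi_{j,j+1}^2\,\psi_{j+1,j+2}^2\,\psi_{j+2,j-1}$, as claimed.

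The computation is routine once the factorization is spotted; the only genuine care needed is in the sign bookkeeping, both in carrying the two leading minus signs from $e^{(j)}_3$ and $e^{(j)}_4$ through the expansion and in matching the indices of the bracketed factor to the standard form of the relation via antisymmetry. The main conceptual step — and the only place where anything beyond direct algebra is used — is recognizing that the remaining quadratic expression in the $\psi$'s is precisely a Grassmann--Pl\"ucker relation, which is exactly what makes the cubic-degree entries collapse into the clean product of five factors.
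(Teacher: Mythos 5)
Your proposal is correct and follows essentially the same route as the paper: expand the $2\times 2$ determinant using~\eqref{eq:entries_equation}, pull out the common factor $\psi_{j-1,j}\,\psi_{j,j+1}\,\psi_{j+1,j+2}^2$, and collapse the remaining bracket to $\psi_{j,j+1}\,\psi_{j+2,j-1}$. The only difference is cosmetic: the paper performs that last step silently as one line of algebra, whereas you explicitly identify it as the three-term Grassmann--Pl\"ucker relation among the four vertices, which is a fair and slightly more illuminating way to justify the same cancellation.
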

\begin{pf}
    By using \eqref{eq:entries_equation} we get
    \begin{align}
       \det D_j^\nu =& -  \psi_{j-1,j}  \psi_{j,j+1}  \psi_{j+1,j-1}  \psi_{j+1,j+2}^2  \psi_{j+2,j} +  \psi_{j-1,j}^2 \psi_{j,j+1} \psi_{j+1,j+2}^3 \nonumber \\
       & =  \psi_{j+1,j+2}^2 \,  \psi_{j-1,j}\,  \psi_{j,j+1} \left(  - \psi_{j+1,j-1} \,  \psi_{j+2,j} + 
        \psi_{j-1,j} \,  \psi_{j+1,j+2}\right) \nonumber \\
        &  = \psi_{j-1,j} \,  \psi_{j,j+1}^2 \, \psi_{j+1,j+2}^2 \,   \psi_{j+2,j-1}. \nonumber
    \end{align}\qed
\end{pf}

\begin{lem}  \label{thm:type0}
  Let $\bfm{v}^{(0)}$ be a type 0 vertex of valency $\nu \geq 3$. Then
  \begin{equation*}
     N_\nu^{(0)} = \begin{cases}
  13, \quad \quad \nu=3,\\ 
  2\nu+6,  \; \nu \geq 4,
\end{cases}
\quad {\rm and} \qquad
     \widetilde{N}_\nu^{(0)} = 
  2\nu+5.
  \end{equation*}
\end{lem}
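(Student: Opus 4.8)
The plan is to turn the statement into a rank computation. Combining the observation recorded just before Lemma~\ref{thm:Xi_1} with Lemma~\ref{thm:Xi_1} (which fixes the contribution of $\Xi_1^{(0)}$ to six), and with the fact that, for a type $0$ vertex, the coefficients of $\Xi_2^{(0)}$ not occurring in~\eqref{eq:support_equation} stay free, one obtains $\dim V^{(k)}_{\Xi^{(0)}} = 6 + 3\nu' - \operatorname{rank}(\bfm{e}^{\Xi^{(0)}})$. Everything therefore reduces to the rank of the system $\{e^{(\nu-\nu'+1)},\dots,e^{(\nu')}\}$ of equations~\eqref{eq:support_equation}, regarded as linear forms in the coefficients $b_{2\nu+4j-4},b_{2\nu+4j-2},b_{2\nu+4j},b_{2\nu+4j+2}$.

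For a boundary vertex the equations form an \emph{open chain}: consecutive equations $e^{(j)},e^{(j+1)}$ share exactly the coefficients $b_{2\nu+4j},b_{2\nu+4j+2}$, while $e^{(\nu-\nu'+1)}$ carries a B-spline coefficient shared with no other equation. Since strict convexity of each patch forces $\psi_{j,j+1}\neq 0$, hence $e^{(j)}_1=\psi_{j,j+1}^3\neq 0$, I would read off the successive ``leading'' coefficients to conclude $c_{\nu-\nu'+1}=0$, then $c_{\nu-\nu'+2}=0$, and so on. Thus all $\nu-2$ chain equations are independent and $\widetilde N^{(0)}_\nu = 6+3(\nu-1)-(\nu-2)=2\nu+5$.

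For an inner vertex the chain closes into a \emph{cycle}. Collecting, for each shared coefficient, its coefficient in $\sum_j c_j e^{(j)}$, the relation $\sum_j c_j e^{(j)}=0$ is equivalent to $D_j^\nu\,(c_j,c_{j+1})^T=\bfm{0}$ for every $j$ (mod $\nu$), with $D_j^\nu$ the matrix~\eqref{eq:Djnu}. The engine is again $e^{(j)}_1=\psi_{j,j+1}^3\neq 0$: the first row of each $D$-relation shows that \emph{any} single vanishing $c_{j_0}$ propagates to $c_{j_0+1}=0$ and hence, once around the cycle, to $\bfm{c}=\bfm{0}$. For $\nu\ge 4$ I would therefore only need one index $j_0$ with $\det D_{j_0}^\nu\neq 0$. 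By Lemma~\ref{lemma:Dj}, $\det D_j^\nu=\psi_{j-1,j}\,\psi_{j,j+1}^2\,\psi_{j+1,j+2}^2\,\psi_{j+2,j-1}$, whose first three factors are nonzero, so this amounts to exhibiting a $j$ with $\bfm{v}^{(0)},\bfm{v}^{(j-1)},\bfm{v}^{(j+2)}$ not collinear; then $c_{j_0}=c_{j_0+1}=0$, the system has full rank $\nu$, and $N^{(0)}_\nu=6+3\nu-\nu=2\nu+6$.

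The case $\nu=3$ is genuinely different, since the wrap-around identifies $j+2\equiv j-1\pmod 3$, forcing $\psi_{j+2,j-1}=0$ and hence $\det D_j^3=0$ for all $j$; each $D_j^3$ still has rank one (its entry $e^{(j+1)}_1\neq 0$), so the $D$-relations become scalar transfer relations $c_{j+1}=-\big(e^{(j)}_3/e^{(j+1)}_1\big)c_j$. I would then check that their monodromy $\prod_{j=1}^{3}\big(-e^{(j)}_3/e^{(j+1)}_1\big)$ telescopes to $1$ identically in the $\psi_{i,j}$, so the dependency space is exactly one-dimensional, the rank drops to $2$, and $N^{(0)}_3=6+9-2=13$. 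I expect the main obstacle to be precisely the non-degeneracy step for $\nu\ge4$, i.e.\ guaranteeing some $\det D_{j_0}^\nu\neq 0$. When $3\nmid\nu$ this is automatic, since simultaneous vanishing of all $\psi_{j+2,j-1}$ would (the shift by $3$ generating $\Z_\nu$) place every $\bfm{v}^{(m)}$ on a single line through $\bfm{v}^{(0)}$, contradicting $\psi_{j,j+1}\neq0$; the delicate sub-case is $3\mid\nu$, where all three-apart triples could a priori be collinear and a finer argument, drawn from the convex cyclic ordering of the directions of $\bfm{v}^{(1)},\dots,\bfm{v}^{(\nu)}$ about $\bfm{v}^{(0)}$, is needed to exclude total degeneracy.
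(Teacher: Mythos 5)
Your reduction of the statement to the rank of $\{e^{(j)}\}$, the open-chain argument for boundary vertices, the cyclic transfer-matrix formulation via the matrices $D_j^\nu$ of \eqref{eq:Djnu}, and the monodromy computation for $\nu=3$ are all correct and amount to the same mechanism as the paper's proof: the paper exhibits, for $\nu\geq 4$, an explicit almost upper triangular $\nu\times\nu$ submatrix and evaluates its determinant \eqref{eq:detA0}, while for $\nu=3$ it writes out the explicit combination $e^{(3)}=\tfrac{\psi_{1,3}\psi_{2,3}}{\psi_{1,2}^2}e^{(1)}+\tfrac{\psi_{1,3}^2}{\psi_{1,2}\psi_{2,3}}e^{(2)}$, which is precisely your ``monodromy equals one'' statement.

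The step you flag as delicate is, however, a genuine gap, and it is worse than you suggest. First, your orbit argument can be sharpened: let $\alpha_j\in(0,\pi)$ be the angle of patch $\Omega^{(j)}$ at $\bfm{v}^{(0)}$, so $\sum_j\alpha_j=2\pi$ for an inner vertex; then $\psi_{j-1,j+2}=0$ forces $\alpha_{j-1}+\alpha_j+\alpha_{j+1}=\pi$, and summing over $j$ shows that \emph{all} $\det D_j^\nu$ can vanish simultaneously only if $\nu\pi=6\pi$. This settles every $\nu\geq4$ with $\nu\neq 6$ (in particular $\nu=9,12,\dots$ need no finer argument). But for $\nu=6$ the total degeneracy is an admissible type~0 configuration: take the six edges emanating from $\bfm{v}^{(0)}$ on three concurrent lines, e.g.\ $\bfm{v}^{(j)}=(\cos(j\pi/3),\sin(j\pi/3))$ with each patch a rhombus. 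Then every $\psi_{j-1,j+1}\neq0$ (type 0) while every $\psi_{j-1,j+2}=0$, each $D_j^6$ has rank one, and the monodromy $\prod_j\bigl(-e_3^{(j)}/e_1^{(j+1)}\bigr)=\prod_j\psi_{j+1,j-1}\big/\prod_j\psi_{j,j+1}$ equals $1$ identically on this stratum (using $\sin(\alpha_{j-1}+\alpha_j)=\sin\alpha_{j+1}$ the product telescopes). Concretely, in the regular case \eqref{eq:entries_equation} gives $e^{(j)}=s^3\,(b_{2\nu+4j-4}-b_{2\nu+4j-2}+b_{2\nu+4j}-b_{2\nu+4j+2})$ with $s=\sqrt3/2$, and one checks directly that $\sum_{j=1}^{6}(-1)^j e^{(j)}=0$. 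Hence the rank drops to $5$ and the count gives $6+18-5=19$ rather than $2\cdot6+6=18$, so no refinement of your argument can establish the stated formula there. You should also note that the paper's own proof does not cover this case either: the determinant \eqref{eq:detA0} contains the factor $\psi_{1,\nu-2}$, a ``distance three'' term that is not controlled by the type~0 hypothesis once $\nu\geq6$ and that vanishes in the above configuration, so the assertion ``$\det\widetilde{A}_\nu^{(0)}\neq0$ since $\bfm{v}^{(0)}$ is type 0'' is only justified for $\nu\in\{4,5\}$. Your write-up should therefore either add a hypothesis excluding the three-concurrent-lines valency-six configuration or treat it as a separate case with a different dimension.
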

\begin{pf}
It has been already shown above that $\widetilde{N}_\nu^{(0)}=2\nu+5$.
To compute $N_\nu^{(0)}$, let $\bfm{v}^{(0)}$ be an inner vertex. For $\nu=3$ a straightforward computation gives
$$
  e^{(3)} = \frac{\psi_{1,3} \,\psi_{2,3}}{\psi_{1,2}^2} e^{(1)} + \frac{\psi_{1,3}^2}{\psi_{1,2}\, \psi_{2,3}} e^{(2)}.
$$
Therefore, $N_3^{(0)} = 6+3\cdot3-(3-1) = 13$.
Let now $\nu \geq 4$.
We have to prove that equations $e^{(1)},e^{(2)}, \ldots,e^{(\nu)}$ are linearly independent. Let us construct a matrix $A_\nu^{(0)} \in \R^{2\nu \times \nu}$, 
where the $j$-th column is represented by coefficients \eqref{eq:entries_equation} of $e^{(j)}$ and rows correspond to spline coefficients 
$b_{6\nu}, b_{2\nu+2}, b_{2\nu+4}, \ldots ,b_{6\nu-2}$. Matrix $A_\nu^{(0)} $ is of the form
$$
\left[\begin{array}{ccccccccccccc}
    e^{(1)}_1 &  e^{(1)}_2 &  e^{(1)}_3 &  e^{(1)}_4 & 0  & 0 & \cdots &  0  & 0 &  0  & 0 & 0  & 0  \\[0.1cm]
     0 & 0 &  e^{(2)}_1 & e^{(2)}_2 & e^{(2)}_3  & e^{(2)}_4 & \cdots &  0  & 0 &  0  & 0 & 0  & 0  \\[0.1cm]
      \vdots &  \vdots &   \vdots &  \vdots &  \vdots  &  \vdots &  \ddots &  \vdots  &  \vdots &   \vdots  &  \vdots &  \vdots  &  \vdots  \\[0.1cm]
      0  & 0 &  0  & 0 & 0  & 0 & \cdots & e^{(\nu-2)}_1 & e^{(\nu-2)}_2 & e^{(\nu-2)}_3 & e^{(\nu-2)}_4 & 0 & 0 \\[0.1cm]
      0  & 0 &  0  & 0 & 0  & 0 & \cdots & 0 & 0 & e^{(\nu-1)}_1 & e^{(\nu-1)}_2 & e^{(\nu-1)}_3 & e^{(\nu-1)}_4 \\[0.1cm] 
      e^{(\nu)}_3 & e^{(\nu)}_4 & 0  & 0 &  0  & 0 & \cdots & 0 & 0 & 0 & 0 & e^{(\nu)}_1 & e^{(\nu)}_2\\[0.1cm] 
\end{array}
\right]^T.
$$
We have to find a nonsingular $\nu \times \nu$ submatrix in $A_\nu^{(0)}$.
   If we take the rows
   $
\bigcup_{i=2}^{\nu} \{2 i\} \cup \{2\nu-1\}, 
$
we get an almost upper triangular $\nu \times \nu$ submatrix $\widetilde{A}_{\nu}^{(0)}$. By Lemma~\ref{lemma:Dj} we can easily see that
   \begin{equation}  \label{eq:detA0}
      \det \widetilde{A}_{\nu}^{(0)} = (-1)^{\nu-2}  \, \psi_{1,\nu-2}  \, \psi_{1,\nu}^4 \, \psi_{\nu-2,\nu-1}^2 \, \psi_{\nu-1,\nu}^2 \, \prod_{\ell=1}^{\nu-3}  \psi_{\ell,\ell+1}^3.
      \end{equation}
      Since $\bfm{v}^{(0)}$ is a type 0 vertex, $\det \widetilde{A}_{\nu}^{(0)} \neq 0$.\qed
\end{pf}

\subsection{Type 1 vertices}

Let $\bfm{v}^{(0)}$ be a type 1 vertex. If $\ab{v}^{(0)}$ is an inner vertex, we assume without loss of generality that 
\begin{equation}  \label{eq:psi32nu-1}
\psi_{2,\nu}=0,
\end{equation}
which corresponds to the fact that $\Omega^{(0,1)}$ is the only two-patch domain for which vertices  
$\bfm{v}_{\Omega^{(0, 1)}}^{(0)},  \bfm{v}_{\Omega^{(0, 1)}}^{(2)}$ and $\bfm{v}_{\Omega^{(0, 1)}}^{(4)}$
are collinear. In the case of a boundary vertex $\bfm{v}^{(0)}$, we assume that $\Omega^{(1,2)}$ is such a two-patch domain.
In order to have a non-degenerate case, inner vertices of valency $3$ cannot be type 1 vertices. Moreover, inner vertex $\bfm{v}^{(0)}$ of valency $4$ cannot be a 
type 1 vertex, too, since then also for the two-patch domain $\Omega^{(2,3)}$ the vertices  
$\bfm{v}_{\Omega^{(2, 3)}}^{(0)},  \bfm{v}_{\Omega^{(2, 3)}}^{(2)}$ and $\bfm{v}_{\Omega^{(2, 3)}}^{(4)}$
would be collinear and hence $\bfm{v}^{(0)}$ would be a type 2 vertex (see Fig.~\ref{fig:examples_typesVal4}, left). Note that 
boundary vertices of type 1 exist also 
for valencies $3$ and $4$.
The set of equations  $\bfm{e}^{\Xi^{(0)}}$ is now of the form
$$
\bfm{e}^{\Xi^{(0)}} = \{ \tilde{e}^{(\nu-\nu'+1)}, \bar{e}^{(\nu-\nu'+1)}, \hat{e}^{(\nu-\nu'+1)}, e^{(\nu-\nu'+2)}, e^{(\nu-\nu'+3)}, \ldots, e^{(\nu')} \},
$$
which implies $
N_{\nu}^{(1)} \geq  6+3 \nu - (\nu+2) = 2\nu + 4$
and
$
\widetilde{N}_{\nu}^{(1)} \geq 6+3 (\nu-1) - \nu = 2\nu+3.
$ 
\begin{lem}  \label{thm:type1}
   Let $\bfm{v}^{(0)}$ be a type 1 vertex of valency $\nu$. Then 
   $$
   N_{\nu}^{(1)} = 2\nu+4, \; \,\nu \geq 5, \qquad {\rm and} \qquad \widetilde{N}_{\nu}^{(1)} = 2\nu +3, \; \, \nu \geq 3.
   $$
\end{lem}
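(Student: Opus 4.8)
The plan is to mirror the counting strategy already established for type~0 vertices in Lemma~\ref{thm:type0}, but adapted to the modified equation set $\bfm{e}^{\Xi^{(0)}}$ in which the single two-patch domain $\Omega^{(0,1)}$ (or $\Omega^{(1,2)}$ for a boundary vertex) contributes the three collinear-case equations $\tilde{e}^{(\cdot)}, \bar{e}^{(\cdot)}, \hat{e}^{(\cdot)}$ of \eqref{eq:support_equation2} instead of the single equation $e^{(\cdot)}$. Recall from Lemma~\ref{thm:Xi_1} that $\Xi_1^{(0)}$ already contributes six degrees of freedom, and that $|\Xi_2^{(0)}|=3\nu'$. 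The lower bounds $N_\nu^{(1)}\geq 2\nu+4$ and $\widetilde{N}_\nu^{(1)}\geq 2\nu+3$ have already been recorded by counting the number of equations in $\bfm{e}^{\Xi^{(0)}}$ (namely $\nu+2$ for the inner case, $\nu$ for the boundary case). Thus the entire task reduces to showing that these equations are in fact linearly independent, so that the lower bound is attained with equality.

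First I would treat the boundary vertex case, which is cleaner. Here the equations act on disjoint or sequentially overlapping blocks of coefficients in $\Xi_2^{(0)}$, and since there is no wrap-around closing the ring, the associated coefficient matrix has a staircase (block-bidiagonal) structure analogous to $A_\nu^{(0)}$ but with the first column replaced by the three collinear equations. I would exhibit an explicit $\nu\times\nu$ nonsingular submatrix: the three equations $\tilde{e}, \bar{e}, \hat{e}$ from the collinear patch are supported on the coefficients $b_{2\nu+4j-4}, b_{2\nu+4j-3}, b_{2\nu+4j-2}, b_{2\nu+4j}, b_{2\nu+4j+2}, b_{2\nu+4j+1}$, and one checks from \eqref{eq:support_equation2} that these three are themselves linearly independent (e.g.\ $\tilde{e}$ involves $b_{2\nu+4j}$ with coefficient $\tfrac12\psi_{j-1,j}$, $\bar{e}$ involves $b_{2\nu+4j+2}$, and $\hat{e}$ involves $b_{2\nu+4j-3}$ with coefficient $-2\psi_{j+1,j}^3\neq0$), since under the type~1 assumption $\psi_{j-1,j}$ and $\psi_{j,j+1}$ do not simultaneously vanish on the non-collinear neighbours. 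The remaining $e^{(j)}$ equations then stack as before, and selecting the appropriate rows yields a triangular structure whose determinant factors into a product of nonzero $\psi$-terms by Lemma~\ref{lemma:Dj}.

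For the inner vertex case the same staircase argument applies but the ring now closes, so the matrix wraps around exactly as in the proof of Lemma~\ref{thm:type0}; the hypothesis $\nu\geq5$ (type~1 being impossible for $\nu=3,4$ inner, as noted in the text) guarantees enough patches that the wrap-around equation $e^{(\nu)}$ overlaps distinctly with the collinear block and does not create a forced linear dependency. I would choose a row selection analogous to $\bigcup_{i}\{2i\}\cup\{2\nu-1\}$ and compute the determinant of the resulting submatrix via Lemma~\ref{lemma:Dj}, obtaining a product of $\psi$-factors none of which vanish: the only collinearity present is $\psi_{2,\nu}=0$ from \eqref{eq:psi32nu-1}, and I would verify that this particular factor does not appear in the determinant expansion. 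Since all factors are then nonzero, the submatrix is nonsingular, the full equation set has maximal rank $\nu+2$ (resp.\ $\nu$), and the lower bounds are attained, giving $N_\nu^{(1)}=2\nu+4$ and $\widetilde{N}_\nu^{(1)}=2\nu+3$.

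The main obstacle I anticipate is the bookkeeping around the collinear block: one must confirm that the three equations $\tilde{e}, \bar{e}, \hat{e}$, together with the adjacent $e^{(j)}$ equations that share coefficients with them, remain jointly independent, and in particular that the lone collinearity $\psi_{2,\nu}=0$ does not propagate into the chosen determinant as a vanishing factor. This requires carefully tracking which $\psi_{i,j}$ enter the determinant of the selected submatrix—exactly the kind of index-chasing that Lemma~\ref{lemma:Dj} is designed to streamline—and checking that the constant $C_2^{(j)}$ appearing in $\hat{e}^{(j)}$ does not need to be evaluated because the corresponding coefficient $b_{2\nu+4j+2}$ can be pivoted by $\bar{e}^{(j)}$ instead.
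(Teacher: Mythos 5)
Your proposal is correct and follows essentially the same route as the paper: isolate $\hat{e}^{(1)}$ via its unique support on $b_{2\nu+1}$ (coefficient $-2\psi_{j+1,j}^3\neq 0$), assemble the remaining equations into a staircase matrix extending $A_\nu^{(0)}$, and verify via Lemma~\ref{lemma:Dj} that the determinant of a suitably chosen square submatrix does not contain the vanishing factor $\psi_{2,\nu}$ — which is exactly the computation $\det\widetilde{A}_{\nu}^{(1)}=-\tfrac{1}{2}\psi_{1,\nu}\det\widetilde{A}_{\nu}^{(0)}$ in the paper. The only cosmetic difference is the order of the two cases and that the paper dispatches the boundary case $\nu=3$ directly by Lemma~\ref{lem:dim_boundary_twopatch}.
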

\begin{pf}
Let first $\bfm{v}^{(0)}$ be an inner vertex. 
   Equation $\hat{e}^{(1)}$ is the only equation in  $\bfm{e}^{\Xi^{(0)}}$
   which support includes $b_{2\nu+1}$ and is therefore (since $\psi_{2,1} \neq 0$) clearly linearly independent with all other equations in $\bfm{e}^{\Xi^{(0)}}$. 
   It remains to prove that also all equations in $\bfm{e}^{\Xi^{(0)}} \backslash \{\hat{e}^{(1)}\}$
   are linearly independent among them. Similarly as in Lemma~\ref{thm:type0} we construct a matrix $A_\nu^{(1)} \in \R^{2 \nu \times (\nu+1)}$, where 
   columns $3,4,\ldots,\nu+1$ are the same as the columns $2,3,\ldots,\nu$ in $A_{\nu}^{(0)}$, while the first two columns correspond to $\tilde{e}^{(1)}$ and 
   $\bar{e}^{(1)}$. The rows again correspond to $b_{6\nu}, b_{2\nu+2}, b_{2\nu+4}, \ldots ,b_{6\nu-2}$.
 By selecting the rows 
 $
\bigcup_{i=2}^{\nu} \{2 i-1\} \cup \{4,2\nu\}, \;
$
we obtain an almost upper triangular $(\nu+1) \times (\nu+1)$ submatrix $\widetilde{A}_{\nu}^{(1)}$. It is straightforward to see that
\begin{equation}  \label{eq:detA1}
   \det \widetilde{A}_{\nu}^{(1)} = \frac{1}{2} \det \widetilde{A}_{\nu}^{(0)} \frac{1}{\psi_{2,1} \psi_{1,\nu}^2}  \psi_{1,\nu}^3\,\psi_{1,2} = -\frac{1}{2} \psi_{1,\nu} \det \widetilde{A}_{\nu}^{(0)}.
\end{equation}  
The last expression in \eqref{eq:detA1} does not involve the term $\psi_{2,\nu}$, which implies that $\det \widetilde{A}_{\nu}^{(1)} \neq 0$.

Let now $\bfm{v}^{(0)}$ be a boundary vertex and $\Omega_{\bfm{v}^{(0)}}$ consists of $(\nu-2)$ two-patch domains. 
For the case $\nu=3$ Lemma~\ref{lem:dim_boundary_twopatch} proves the result. For $\nu \geq 4$ the same arguments as for the inner vertex show linear independency of equations in $\bfm{e}^{\Xi^{(0)}}$  and thus $\widetilde{N}_{\nu}^{(1)} = 6 + 3 (\nu-1) - (3+(\nu-3)) = 2 \nu+3$. 
 \qed
\end{pf}

\subsection{Type 2 vertices}

Let $\bfm{v}^{(0)}$ be a type 2 vertex. Clearly the valency $\nu$ of $\bfm{v}^{(0)}$ is $\nu \geq 4$. If $\ab{v}^{(0)}$ is an inner vertex,
we assume without loss of generality that \eqref{eq:psi32nu-1} holds, which corresponds to the fact that $\Omega^{(0,1)}$ is one of the two two-patch subdomains for 
which the three vertices $\bfm{v}_{\Omega^{(0, 1)}}^{(0)}, \, \bfm{v}_{\Omega^{(0, 1)}}^{(2)}$ and $\bfm{v}_{\Omega^{(0, 1)}}^{(4)}$ 
are collinear. In the case of a boundary vertex $\bfm{v}^{(0)}$, we assume that $\Omega^{(1,2)}$ is such a two-patch domain.

\begin{lem}  \label{thm:type2}
  Let $\bfm{v}^{(0)}$ be a type 2 vertex of valency $\nu \geq 4$. Then
  \begin{equation}  \label{eq:type2generalnu}
     N_\nu^{(2)} = \begin{cases}
  2\nu+3,  \; \nu \in \{ 4,5\},\\
  2\nu+2,  \; \nu \geq  6,
\end{cases}
\quad {\rm and} \qquad
\widetilde{N}_\nu^{(2)} = 2 \nu+1.
  \end{equation}
\end{lem}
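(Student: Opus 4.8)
The plan is to follow the same counting strategy used for the type 0 and type 1 vertices, adapting it to the presence of two collinear two-patch domains. Recall that by Lemma~\ref{thm:Xi_1} the coefficients in $\Xi_1^{(0)}$ contribute exactly six degrees of freedom, so the task reduces (via the Observation) to counting the degrees of freedom for the $|\Xi_2^{(0)}|$ coefficients once those in $\Xi_1^{(0)}$ are set to zero. For a type 2 inner vertex we have $|\Xi_2^{(0)}| = 3\nu$, and the relevant equation set $\bfm{e}^{\Xi^{(0)}}$ consists of the three equations $\tilde{e}^{(j)}, \bar{e}^{(j)}, \hat{e}^{(j)}$ from each of the two collinear two-patch domains, together with the single equations $e^{(j)}$ from the remaining $\nu-2$ non-collinear two-patch domains. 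The number of such equations is $3+3+(\nu-2) = \nu+4$, giving the a priori lower bound $N_\nu^{(2)} \geq 6 + 3\nu - (\nu+4) = 2\nu+2$, and the actual dimension exceeds this bound precisely by the number of linear dependencies among the equations in $\bfm{e}^{\Xi^{(0)}}$.

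First I would treat the generic large-valency case $\nu \geq 6$. By Remark~\ref{rem:types}, for $\nu > 4$ the two collinear two-patch domains defining the type 2 vertex must share a common patch, so (after the chosen normalization) they are $\Omega^{(0,1)}$ and $\Omega^{(1,2)}$, contributing equations $\tilde{e}^{(1)}, \bar{e}^{(1)}, \hat{e}^{(1)}$ and $\tilde{e}^{(2)}, \bar{e}^{(2)}, \hat{e}^{(2)}$. Just as in the type 1 proof, the equations $\hat{e}^{(1)}$ and $\hat{e}^{(2)}$ are the only ones whose support involves $b_{2\nu+1}$ and $b_{2\nu+5}$ respectively, and are therefore immediately linearly independent from all others. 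For the remaining equations I would assemble a matrix $A_\nu^{(2)}$ analogous to $A_\nu^{(0)}$ and $A_\nu^{(1)}$, whose columns encode the coefficients $\tilde{e}^{(1)},\bar{e}^{(1)},\tilde{e}^{(2)},\bar{e}^{(2)}$ followed by the $e^{(j)}$ for $j=3,\ldots,\nu$, and whose rows correspond to the coefficients $b_{6\nu}, b_{2\nu+2}, b_{2\nu+4}, \ldots, b_{6\nu-2}$. I would then select an appropriate almost-upper-triangular square submatrix, exactly mimicking the row selection used in Lemma~\ref{thm:type1}, and compute its determinant as a product of $\psi_{i,j}$ factors by repeated use of Lemma~\ref{lemma:Dj}. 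The key point is to verify that this determinant, expressed in terms of $\det\widetilde{A}_\nu^{(0)}$ and the extra $\psi$-factors arising from the second collinear block, does \emph{not} contain the vanishing factors $\psi_{2,\nu}=0$ or the factor corresponding to the second collinearity; this nonvanishing forces full rank and yields $N_\nu^{(2)}=2\nu+2$ for $\nu\geq 6$.

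Next I would handle the low-valency exceptions $\nu\in\{4,5\}$, where an additional dependency appears and raises the dimension to $2\nu+3$. For $\nu=4$ the type 2 configuration is special (Fig.~\ref{fig:examples_typesVal4}, left): the two collinear two-patch domains can be opposite or adjacent, and the small number of patches forces one extra linear relation among the $\tilde{e},\bar{e},e$ equations, which I would exhibit by an explicit computation analogous to the $\nu=3$ relation in Lemma~\ref{thm:type0}. The case $\nu=5$ is the borderline case where the submatrix constructed above becomes singular due to a coincidental factorization; I would identify the single dependency by writing down the explicit combination (using \eqref{eq:entries_equation} and \eqref{eq:support_equation2}) and checking it vanishes identically under the two collinearity constraints. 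The boundary case is cleaner: a boundary type 2 vertex of valency $\nu$ gives $\nu-2$ two-patch domains with $|\Xi_2^{(0)}|$ reduced accordingly, all equations in $\bfm{e}^{\Xi^{(0)}}$ are linearly independent by the same support argument (no wrap-around dependency can occur), and a direct count yields $\widetilde{N}_\nu^{(2)} = 6 + 3(\nu-1) - (3+3+(\nu-4)) = 2\nu+1$.

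The main obstacle I anticipate is the determinant computation for $\nu\geq 6$: assembling the correct square submatrix so that it remains almost-upper-triangular despite now having two three-equation blocks rather than one, and then tracking which $\psi$-factors survive. The delicate part is confirming that the two collinearity conditions $\psi_{2,\nu}=0$ and its analogue for $\Omega^{(1,2)}$ are the \emph{only} factors that could vanish and that they are genuinely absent from the final determinant, so that non-degeneracy of the vertex guarantees full rank. The low-valency cases $\nu=4,5$ are the other subtle point, since there the generic argument breaks down and the extra degree of freedom must be pinned down by an explicit dependency relation rather than a clean determinant formula.
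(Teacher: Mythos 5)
Your proposal follows essentially the same route as the paper: the lower bound $6+3\nu-(\nu+4)=2\nu+2$ from Lemma~\ref{thm:Xi_1}, an almost-upper-triangular submatrix of $A_\nu^{(2)}$ whose determinant (a product of $\psi_{i,j}$ factors computed via Lemma~\ref{lemma:Dj}) is checked to avoid the two vanishing quantities $\psi_{2,\nu}$ and $\psi_{1,3}$ for $\nu\geq 6$, explicit dependency relations for $\nu\in\{4,5\}$ (the paper's Appendix~A), and the direct count $6+3(\nu-1)-(\nu+2)=2\nu+1$ for boundary vertices. One small correction: for $\nu=4$ the two collinear two-patch domains cannot be adjacent --- the collinearity conditions for opposite domains coincide ($\psi_{2,4}=0$ is shared by $\Omega^{(0,1)}$ and $\Omega^{(2,3)}$), so adjacent collinear domains would force all four to be collinear and hence a type~4 vertex; the type~2 case at $\nu=4$ therefore has exactly one opposite pair, and the equation set is $\{\tilde{e}^{(1)},\bar{e}^{(1)},\hat{e}^{(1)},e^{(2)},\tilde{e}^{(3)},\bar{e}^{(3)},\hat{e}^{(3)},e^{(4)}\}$ rather than two adjacent three-equation blocks.
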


\begin{pf}
Let first $\bfm{v}^{(0)}$ be an inner vertex. We prove formula \eqref{eq:type2generalnu} for $\nu \in \{4,5\}$ in Appendix A.
Let now $\nu \geq 6$. It is straightforward to see that $\Omega^{(1,2)}$ or $\Omega^{(\nu-1,\nu)}$ has to be the second two-patch domain 
which makes $\bfm{v}^{(0)}$ to be type $2$ vertex (compare Remark~\ref{rem:types}).
Without loss of generality we assume that it is $\Omega^{(1,2)}$. Therefore, 
\begin{equation} \label{eq:psiForTwoLines}
\psi_{1,3}=0 \quad {\rm and} \quad \psi_{2,\nu} = 0.
\end{equation}
The set $\bfm{e}^{\Xi^{(0)}}$ consists of equations 
\[ \bfm{e}^{\Xi^{(0)}} = \{\tilde{e}^{(1)}, \bar{e}^{(1)}, \hat{e}^{(1)}, \tilde{e}^{(2)}, \bar{e}^{(2)}, 
\hat{e}^{(2)}, e^{(3)}, e^{(4)}, \ldots, e^{(\nu)}\}, 
\]
where the equations $\hat{e}^{(1)}$ and $\hat{e}^{(2)}$ are clearly linearly independent with all the others.
Moreover, it is straightforward to see that all equations in $\bfm{e}^{\Xi^{(0)}} \backslash \{\hat{e}^{(1)},\hat{e}^{(2)},e^{(\nu)}\}$ 
are linearly independent.  
Let us construct a matrix $A_\nu^{(2)} \in \R^{(2 \nu)\times (\nu+2)}$, where each column corresponds to one of the equations in 
$\bfm{e}^{\Xi^{(0)}} \backslash \{\hat{e}^{(1)},\hat{e}^{(2)}\}$
and rows correspond to  $b_{6\nu}, b_{2\nu+2}, b_{2\nu+4}$, $\ldots$, $b_{6\nu-2}$.
By selecting the rows with indices
$
\bigcup_{i=2}^{\nu} \{2 i\} \cup \{3,5,2\nu-1\}, \;
$
we obtain an almost upper triangular $(\nu+2) \times (\nu+2)$ submatrix $\widetilde{A}_{\nu}^{(2)}$, for which it is easy to see that
\begin{equation*}  \label{eq:detA2}
   \det \widetilde{A}_{\nu}^{(2)} = - \frac{1}{4} \psi_{1,2} \psi_{1,\nu}\det \widetilde{A}_{\nu}^{(0)} = \frac{1}{2} \psi_{1,2} \det \widetilde{A}_{\nu}^{(1)}.
\end{equation*}  
By \eqref{eq:detA0} and \eqref{eq:psiForTwoLines} it follows that $\det \widetilde{A}_{\nu}^{(2)}  \neq 0$.
Note that for $\nu=5$ expression \eqref{eq:detA0} involves the term $\psi_{1,3}=0$.

Let now $\bfm{v}^{(0)}$ be a boundary vertex of type $2$ and $\Omega_{\bfm{v}^{(0)}}$ consists of $(\nu-2)$ two-patch domains. 
Clearly, 
$\Omega^{(2,3)}$ is the second two-patch domain for which the vertices $\bfm{v}_{\Omega^{(2, 3)}}^{(0)}, \, \bfm{v}_{\Omega^{(2, 3)}}^{(2)}$ and 
$\bfm{v}_{\Omega^{(2, 3)}}^{(4)}$ are collinear. Then the set $\bfm{e}^{\Xi^{(0)}}$ consists of equations 
\[
\bfm{e}^{\Xi^{(0)}} = \{\tilde{e}^{(2)}, \bar{e}^{(2)}, \hat{e}^{(2)}, 
\tilde{e}^{(3)}, \bar{e}^{(3)}, \hat{e}^{(3)}, e^{(4)}, \ldots, e^{(\nu-1)}\}. 
\] Since all equations in $\bfm{e}^{\Xi^{(0)}}$ are linearly independent it follows that
$\widetilde{N}_\nu^{(2)} = 6+3(\nu-1) - (6+(\nu-4)) = 2\nu+1$.
\qed
\end{pf}

\subsection{Type 4 vertices}

Recall that only inner vertices of valency $\nu=4$ can be type $4$ vertices (compare  Remark~\ref{rem:types} and Fig.~\ref{fig:examples_typesVal4} 
(right)). We have equations 
\begin{equation}  \label{eq:12equations}
\bfm{e}^{\Xi^{(0)}}= \{\tilde{e}^{(1)}, \bar{e}^{(1)}, \hat{e}^{(1)}, \tilde{e}^{(2)}, \bar{e}^{(2)}, \hat{e}^{(2)}, \tilde{e}^{(3)}, \bar{e}^{(3)}, \hat{e}^{(3)},  
\tilde{e}^{(4)}, \bar{e}^{(4)}, \hat{e}^{(4)} \},
\end{equation}
which form columns of a matrix $A_4^{(4)}$.
By considering their supports, the first nine equations are clearly linearly independent. Moreover, the last three equations are all dependent by the first nine, 
which is shown in the next lemma.
\begin{lem} \label{lem:type4}
 Let $\bfm{v}^{(0)}$ be a type 4 inner vertex of valency $4$. Among the equations in \eqref{eq:12equations} only nine are linearly independent, 
 therefore $ N_4^{(4)} =  9$.  The equations $\bar{e}^{(4)}$, $\tilde{e}^{(4)}$ and $\hat{e}^{(4)}$  can be expressed as 
\begin{align}
   & \bar{e}^{(4)} = \alpha_1 \, \tilde{e}^{(1)} + \alpha_2 \, \bar{e}^{(2)} + \alpha_3 \, \tilde{e}^{(3)}, \label{eq:val4type4_1_2} \qquad 
   \tilde{e}^{(4)} = \beta_1 \, \hat{e}^{(1)} + \beta_2 \, \tilde{e}^{(2)} + \beta_3 \, \bar{e}^{(3)},  \\
  &  \hat{e}^{(4)}  = \gamma_1 \, \tilde{e}^{(1)} +  \gamma_2 \, \bar{e}^{(1)} +  \gamma_3 \, \hat{e}^{(1)} +  \gamma_4 \,  \tilde{e}^{(2)} +
   \gamma_5 \,  \bar{e}^{(2)} +  \gamma_6 \, \hat{e}^{(2)} +  \gamma_7 \, \tilde{e}^{(3)} +  \gamma_8 \,  \bar{e}^{(3)} +  \gamma_9 \, \hat{e}^{(3)}, \label{eq:val4type4_3}
\end{align}
for some coefficients $\alpha_i, \beta_i$, $i=1,2,3$, and $\gamma_j$, $j=1,2,\ldots,9$, which depend on vertices $\bfm{v}^{(r)}$ and $\widetilde{\bfm{v}}^{(r)}$, $r=1,2,3,4$, only. 
\end{lem}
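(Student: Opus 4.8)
The plan is to invoke the two-step degree-of-freedom count established above: by Lemma~\ref{thm:Xi_1} the set $\Xi_1^{(0)}$ already contributes six degrees of freedom, so it suffices to set all coefficients of $\Xi_1^{(0)}$ to zero and determine the number of free coefficients among the twelve of $\Xi_2^{(0)}$, subject to the twelve equations in \eqref{eq:12equations}. Since a type~$4$ vertex makes all four two-patch domains $\Omega^{(j-1,j)}$ collinear, the collinearity conditions translate (via $\psi_{i,j}=0$ for collinear $\bfm{v}^{(0)},\bfm{v}^{(i)},\bfm{v}^{(j)}$) into the two relations
\begin{equation} \label{eq:type4collinear}
\psi_{1,3} = 0 \quad \text{and} \quad \psi_{2,4} = 0 .
\end{equation}
All computations below are carried out under \eqref{eq:type4collinear} together with the three-term Grassmann--Pl\"ucker identity
\begin{equation} \label{eq:plucker}
\psi_{1,2}\,\psi_{3,4} - \psi_{1,3}\,\psi_{2,4} + \psi_{1,4}\,\psi_{2,3} = 0 ,
\end{equation}
which holds because $\bfm{v}^{(0)}=(0,0)^T$ turns each $\psi_{i,j}$ into a $2\times 2$ minor of the matrix with columns $\bfm{v}^{(1)},\dots,\bfm{v}^{(4)}$.

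First I would record, from \eqref{eq:support_equation2} and the wrap-around substitution for $j=1$ and $j=\nu=4$, the support of each of the twelve equations in the twelve unknowns $b_9,b_{10},b_{12},b_{13},b_{14},b_{16},b_{17},b_{18},b_{20},b_{21},b_{22},b_{24}$. The decisive feature is that the coefficient $b_{2\nu+4j-3}$ (an odd-indexed ring-two coefficient, occurring with the nonzero factor $-2\psi_{j+1,j}^3$) appears in $\hat{e}^{(j)}$ only; combined with the two-term supports of the $\tilde{e}^{(j)}$ and $\bar{e}^{(j)}$ this produces an echelon pattern showing at once that the first nine equations are linearly independent, so the rank is at least nine.

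To produce the three dependencies I would fix the unknown coefficients by matching supports, reducing each consistency check to \eqref{eq:type4collinear}--\eqref{eq:plucker}. For $\bar{e}^{(4)}$, supported on $\{b_{10},b_{20}\}$, matching the coefficients of $b_{10}$ and $b_{20}$ against $\tilde{e}^{(1)}$ and $\tilde{e}^{(3)}$ fixes $\alpha_1,\alpha_3$, the vanishing of $b_{12}$ fixes $\alpha_2$, and the residual condition on $b_{18}$ reduces precisely to $\psi_{1,2}\psi_{3,4}+\psi_{1,4}\psi_{2,3}=0$, i.e.\ to \eqref{eq:plucker} under \eqref{eq:type4collinear}; this gives the first identity in \eqref{eq:val4type4_1_2}. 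The identity for $\tilde{e}^{(4)}$ is obtained the same way, cancelling the two interior unknowns and closing up again by \eqref{eq:plucker}. Since the resulting combinations depend only on the $\psi_{i,j}$, the coefficients $\alpha_i,\beta_i$ have the asserted dependence on the vertices.

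The main obstacle is the nine-term identity \eqref{eq:val4type4_3} for $\hat{e}^{(4)}$, whose six-term support $\{b_9,b_{10},b_{20},b_{21},b_{22},b_{24}\}$ couples, through the wrap-around unknowns, the equations of the first and third two-patch domains to those of the fourth, and whose coefficients $C_1^{(j)},C_2^{(j)}$ are only implicitly known. I would begin with the two odd-indexed unknowns: the coefficient of $b_{21}$ forces the multiple $\gamma_9$ of $\hat{e}^{(3)}$ and the coefficient of $b_9$ forces the multiple $\gamma_3$ of $\hat{e}^{(1)}$, these being the only first-nine equations meeting those unknowns. Subtracting $\gamma_3\hat{e}^{(1)}+\gamma_9\hat{e}^{(3)}$ from $\hat{e}^{(4)}$ leaves a residue supported on even ring-two unknowns, which I would clear one unknown at a time with the seven remaining equations $\tilde{e}^{(1)},\bar{e}^{(1)},\tilde{e}^{(2)},\bar{e}^{(2)},\hat{e}^{(2)},\tilde{e}^{(3)},\bar{e}^{(3)}$, reading off $\gamma_1,\gamma_2,\gamma_4,\dots,\gamma_8$. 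The genuine difficulty lies in the final consistency of this over-determined elimination: the leftover coefficients, which carry the implicit constants $C_1^{(j)},C_2^{(j)}$, must cancel, and this is forced exactly by \eqref{eq:type4collinear} together with repeated use of \eqref{eq:plucker}. Once \eqref{eq:val4type4_1_2} and \eqref{eq:val4type4_3} are verified, the twelve equations have rank nine, so $\Xi_2^{(0)}$ contributes $12-9=3$ degrees of freedom; adding the six from Lemma~\ref{thm:Xi_1} yields $N_4^{(4)} = 6+3 = 9$.
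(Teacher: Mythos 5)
Your overall strategy coincides with the paper's: after Lemma~\ref{thm:Xi_1} one sets $\Xi_1^{(0)}$ to zero and studies the rank of the twelve equations \eqref{eq:12equations} on the twelve coefficients of $\Xi_2^{(0)}$, the dependency coefficients are determined by matching supports, and the consistency condition for \eqref{eq:val4type4_1_2} is exactly the identity $\psi_{1,2}\,\psi_{3,4}+\psi_{1,4}\,\psi_{2,3}=0$ that you extract from the Grassmann--Pl\"ucker relation under $\psi_{1,3}=\psi_{2,4}=0$; the paper uses the equivalent form $\psi_{1,4}\,\psi_{2,3}/(\psi_{4,3}\,\psi_{1,2})=1$ and obtains the $\alpha_i,\beta_i$ in the same way, by comparing columns of $A_4^{(4)}$. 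Apart from a small imprecision in your echelon argument --- the coefficient $b_{2\nu+4j-3}$ is \emph{not} exclusive to $\hat{e}^{(j)}$, since it also occurs as the $b_{2\nu+4(j-1)+1}$ term of $\hat{e}^{(j-1)}$, although a support argument for the independence of the first nine equations still goes through --- this part is in order.

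The gap is in \eqref{eq:val4type4_3}. You correctly single it out as the hard step, but you only assert, without verification, that the over-determined elimination closes up by ``repeated use of'' the collinearity conditions and the Pl\"ucker relation. That cannot suffice as stated: $\hat{e}^{(4)}$ and the equations $\hat{e}^{(1)},\hat{e}^{(2)},\hat{e}^{(3)}$ needed to cancel it involve the auxiliary vertices $\widetilde{\bfm{v}}^{(j)}$, both through the explicit term $3(p_j \tilde{q}_j - \tilde{p}_j q_j)$ in \eqref{eq:support_equation2} and through the unspecified constants $C_1^{(j)},C_2^{(j)}$. These quantities are not constrained by $\psi_{1,3}=\psi_{2,4}=0$ nor by any Pl\"ucker identity among the $\bfm{v}^{(i)}$, so the consistency of your elimination genuinely depends on the explicit form of $C_1^{(j)},C_2^{(j)}$, which you leave ``implicitly known''. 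The paper closes this step by normalizing the geometry ($q_2=q_4=0$, $p_4=1$), distinguishing the cases $p_1\neq 0$ and $p_1=0$, and exhibiting all nine $\gamma_j$ explicitly; those $\gamma_j$ do contain $\tilde{p}_i,\tilde{q}_i$, which confirms that the cancellation is not a consequence of the $\psi$-identities alone. Until that computation (or an equivalent one with the explicit $C$-constants in hand) is carried out, the rank-nine claim, and hence $N_4^{(4)}=9$, is not established.
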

\begin{pf}
  See Appendix B. \qed
\end{pf}

\subsection{Summary}
Let us summarize the results from the previous subsections.
\begin{thm}
Let $\ab{v}^{(r)}$ be a vertex of valency~$\nu \geq 3$ and of type~$\rho \in \{0,1,2,4\}$ and let $V^{(k)}_{\Xi^{(r)}}$ be the space defined in \eqref{eq:subspace_singlevertex}. 
If $\ab{v}^{(r)}$ is an inner vertex then
\[
\dim V^{(k)}_{\Xi^{(r)}} = N_\nu^{(\rho)} = 
\begin{cases}
6 + 2(\nu-\rho) , \quad  \rho \in \{0,1,2\} \;\, {\rm and }\;\, \nu \geq 4+\rho, \\
7+2 (\nu-\rho) , \quad (\nu,\rho) \in \{ (3,0), (4,2), (5,2)\}, \\
9 , \qquad \qquad  \;\;\,\quad (\nu,\rho) =(4,4).
\end{cases}
\]
For a boundary vertex $\ab{v}^{(r)}$ it holds
\[
 \dim V^{(k)}_{\Xi^{(r)}} = \widetilde{N}_\nu^{(\rho)} =   5 + 2(\nu-\rho) , \quad  \nu \geq \max \{3,2+\rho\}.
\]  
\end{thm}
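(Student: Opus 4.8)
The plan is to treat this as a consolidation theorem whose content is already contained in the preceding lemmas, so that the proof amounts to rewriting the case-by-case values $N_\nu^{(\rho)}$ and $\widetilde N_\nu^{(\rho)}$ in the single unified form asserted here. By the relabelling convention of Section~\ref{sec:dimension}, the space $V^{(k)}_{\Xi^{(r)}}$ attached to an arbitrary vertex $\ab{v}^{(r)}$ coincides with $V^{(k)}_{\Xi^{(0)}}$ after relabelling $\ab{v}^{(r)}$ as $\ab{v}^{(0)}$; hence its dimension equals $N_\nu^{(\rho)}$ in the inner case and $\widetilde N_\nu^{(\rho)}$ in the boundary case. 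By Remark~\ref{rem:types} the only admissible types are $\rho \in \{0,1,2\}$ and, exclusively for $\nu=4$, also $\rho=4$, so the case distinction in the statement is exhaustive.

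First I would dispatch the inner-vertex case by going through the four types. For $\rho=0$, Lemma~\ref{thm:type0} gives $N_\nu^{(0)}=2\nu+6$ when $\nu\geq 4$, which equals $6+2(\nu-0)$ in the generic range $\nu\geq 4=4+\rho$, while $N_3^{(0)}=13=7+2(3-0)$ accounts for the single exceptional pair $(\nu,\rho)=(3,0)$. For $\rho=1$, Lemma~\ref{thm:type1} yields $N_\nu^{(1)}=2\nu+4=6+2(\nu-1)$ for $\nu\geq 5=4+\rho$, with no exceptional small-valency value, since inner type~$1$ vertices require $\nu\geq 5$. For $\rho=2$, Lemma~\ref{thm:type2} gives $N_\nu^{(2)}=2\nu+2=6+2(\nu-2)$ for $\nu\geq 6=4+\rho$, whereas the two exceptional pairs $(\nu,\rho)\in\{(4,2),(5,2)\}$ produce $2\nu+3=7+2(\nu-2)$. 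Finally, for $\rho=4$ only $(\nu,\rho)=(4,4)$ occurs, and Lemma~\ref{lem:type4} supplies $N_4^{(4)}=9$.

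Next I would treat the boundary-vertex case uniformly. Here Lemmas~\ref{thm:type0}, \ref{thm:type1} and~\ref{thm:type2} give $\widetilde N_\nu^{(0)}=2\nu+5$, $\widetilde N_\nu^{(1)}=2\nu+3$ and $\widetilde N_\nu^{(2)}=2\nu+1$ respectively, each of which is precisely $5+2(\nu-\rho)$; the admissibility ranges stated in those lemmas combine to the single condition $\nu\geq\max\{3,2+\rho\}$, and type~$4$ does not arise on the boundary. Collecting these identities proves the theorem.

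The computation carries no genuine obstacle: all the analytic and combinatorial work—the rank counts of the systems $\ab{e}^{\Xi^{(0)}}$, the determinant evaluations of the submatrices $\widetilde A_\nu^{(\rho)}$, and the dependency relations of Lemma~\ref{lem:type4}—has already been performed. The only point demanding care is the bookkeeping: one must verify that the boundary between the generic formula $6+2(\nu-\rho)$ and the exceptional formula $7+2(\nu-\rho)$ falls exactly at $\nu=4+\rho$ for each type, and that the exceptional list $\{(3,0),(4,2),(5,2)\}$ together with $(4,4)$ is complete, so that no admissible pair $(\nu,\rho)$ is left unaccounted for.
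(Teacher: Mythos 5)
Your proposal is correct and takes essentially the same route as the paper, whose entire proof of this theorem is a reference to Lemmas~\ref{thm:type0}--\ref{lem:type4}; you merely spell out the case-by-case bookkeeping that the paper leaves implicit. All of your arithmetic identifications ($2\nu+6=6+2\nu$, $2\nu+4=6+2(\nu-1)$, $2\nu+2=6+2(\nu-2)$, the exceptional values $13$, $2\nu+3$, and $9$, and the boundary formula $5+2(\nu-\rho)$ with range $\nu\geq\max\{3,2+\rho\}$) check out against the cited lemmas.
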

\begin{pf}
 Consider Lemma~\ref{thm:type0} 
 -~\ref{lem:type4}.
 \qed
\end{pf}

%%%%%%%%%%%%%%%%%%%%%%%%%%%%%%%%%%%%%%%%%%%%%%%%%%%%%%%%%%%%%%%%%%%%%%%
\section{Numerical examples}   \label{sec:Examples}
%%%%%%%%%%%%%%%%%%%%%%%%%%%%%%%%%%%%%%%%%%%%%%%%%%%%%%%%%%%%%%%%%%%%%%%%

In this section, we will explore the potential of the space of $C^2$-smooth geometrically continuous isogeometric functions by performing $L^{2}$ approximation and 
by numerically solving the triharmonic equation on two different configurations of bilinear multi-patch domains. First, let us describe a basis construction of the 
space of $C^2$-smooth geometrically continuous isogeometric functions.

\subsection{Basis of $V^{(k)}$} \label{subsec:basis}

We will present a method to generate a basis of the space $V^{(k)}$, which will consist of single bases for the subspaces 
$V^{(k)}_{\Omega^{(\ell)}}$, $V^{(k)}_{\Gamma^{(j)}}$ and $V^{(k)}_{\Xi^{(r)}}$, defined in \eqref{eq:space_Omega_single}, \eqref{eq:space_Gamma_single} and 
\eqref{eq:subspace_singlevertex}, respectively. Thereby, the basis functions of 
$V^{(k)}_{\Gamma^{(j)}}$ and $V^{(k)}_{\Xi^{(r)}}$ will be constructed by means of \cite[Algorithm~1]{KaplVitrih2016}.

\paragraph{Basis of $V^{(k)}_{\Omega^{(\ell)}}$} We select for the basis the collection of $C^{2}$-smooth isogeometric functions
\begin{equation*} 
\bb x \mapsto
\begin{cases}
   (N_{\ab{i}}\circ (\ab{G}^{(\ell)})^{-1})(\bb x) \;
\mbox{ if }\f \, \bb x \in\Omega^{(\ell)},
\\ 0 \; \mbox{ otherwise},
\end{cases}
\end{equation*}
for all $(\ell,\ab{i}) \in \mathcal{I}^{(\ell)} \setminus (\mathcal{I}_{\Gamma} \cup \mathcal{I}_{\Xi})$, where $N_{\ab{i}}$ are the tensor-product 
B-splines of the space $\mathcal{S}_{k}^{d}$ (see Section~\ref{sec:settings}).

\paragraph{Basis of $V^{(k)}_{\Gamma^{(j)}}$} Clearly, a possible basis is determined by a minimal determining 
set of the spline coefficients $\{b_{\ab{i}}^{(\ell)} \}_{(\ell,\ab{i}) \in \mathcal{I}_{\Gamma^{(j)}}}$ with respect to the homogeneous linear 
system~\eqref{eq:nullspace} extended by the constraints $b_{\ab{i}}^{(\ell)}=0$ for $(\ell,\ab{i}) \in \mathcal{I} \setminus \mathcal{I}_{\Gamma^{(j)}}$. 
We use \cite[Algorithm 1]{KaplVitrih2016} to compute such a specific minimal determining set and denote it by $M^{(j)}$ (compare Section~\ref{subsec:vertex_space}). 

\paragraph{Basis of $V^{(k)}_{\Xi^{(r)}}$} Let us first define the following index space 
\[
\widetilde{\mathcal{I}}_{\Xi^{(r)}} = \mathcal{I}_{\Xi^{(r)}} \cup \bigcup_{j: v^{(r)} \in \Gamma^{(j)}} (\mathcal{I}_{\Gamma^{(j)}}\setminus \mathcal{I}_{M^{(j)}} ) .
\]
Since a possible basis of $V^{(k)}_{\Xi^{(r)}}$ is determined by a minimal determining set of the spline coefficients 
$\{b_{\ab{i}}^{(\ell)} \}_{(\ell,\ab{i}) \in \widetilde{\mathcal{I}}_{\Xi^{(r)}}}$ with respect to the homogeneous linear 
system~\eqref{eq:nullspace} extended by the constraints $b_{\ab{i}}^{(\ell)}=0$ for $(\ell,\ab{i}) \in \mathcal{I} \setminus \widetilde{\mathcal{I}}_{\Xi^{(r)}}$, 
we use again \cite[Algorithm 1]{KaplVitrih2016} to compute such a specific minimal determining set.

\paragraph{Nested spaces} We construct nested spaces $V^{(k)}$ by choosing $k = 2^{L} - 1$, $L \geq 0$, and denote 
the resulting isogeometric spaces by $V_h$, where $L$ is the level of refinement and $h = \mathcal{O}(2^{-L} )$. In addition, we consider the spaces 
$V_{2,0h} \subset V_{h}$, which are defined as
\[
 V_{2,0h} = \{w \in V_{h}| \; w(\ab{x}) = \frac{\partial w}{\partial \ab{n}}(\ab{x}) = \triangle w(\ab{x})  = 0  \mbox{ on }\partial \Omega\},
\]
where the conditions
\[
w(\ab{x}) = \frac{\partial w}{\partial \ab{n}}(\ab{x}) = \triangle w(\ab{x})  = 0  \mbox{ on }\partial \Omega
\]
for the isogeometric functions $w$ are the so-called homogeneous boundary conditions of order $2$.
The spaces $V_{2,0h}$ are determined by the homogeneous linear system, which is obtained by the homogeneous linear system~\eqref{eq:nullspace} and 
the additional constraints that all spline coefficients $b^{(\ell)}_{\ab{i}}$, corresponding to control points of the first three columns or rows of the 
boundary $\partial \Omega$, have to be zero. A basis of $V_{2,0h}$ can be constructed similarly as described above by using the extended linear system.  Since each function $w \in V_{h}$ ($w \in V_{2,0h}$) is piecewise $C^{\infty}$-smooth, we obtain $w \in H^{3}(\Omega)$ ($w \in H^{3}_0(\Omega)$), respectively.
This allows us to use the $C^{2}$-smooth geometrically continuous isogeometric functions to solve the triharmonic equation (see Example~\ref{ex:triharmonic}). 

\begin{rem}
Following the classification of the basis functions of the space $V^{(k)}$ into the two different kinds of basis functions presented in 
\cite[Section 4.1]{KaplVitrih2016} for the two-patch case, \emph{the basis functions of the first kind} are given by the basis functions of the space 
$V^{(k)}_{\Omega}$ and the \emph{basis functions of the second kind} are given by the basis functions of the spaces $V^{(k)}_{\Gamma}$ and $V^{(k)}_{\Xi}$.
This classification is based on the fact that the spline coefficients $b_{\ab{i}}^{(\ell)}$ of the basis functions of the space $V^{(k)}_{\Omega}$ are independent of the given 
multi-patch domain~$\Omega$, while the spline coefficients $b_{\ab{i}}^{(\ell)}$ of the basis functions of the spaces $V^{(k)}_{\Gamma}$ and $V^{(k)}_{\Xi}$ depend on 
the given multi-patch domain~$\Omega$. 
\end{rem}

\subsection{Examples}

We use the concept of isogeometric analysis to perform $L^{2}$ approximation and to numerically solve the triharmonic equation on two different bilinear multi-patch domains with 
extraordinary vertices. For both applications we consider the same model problems as introduced in \cite[Section 4.2 and 4.3]{KaplVitrih2016} for the case of two-patch 
domains and which can be simply extended to multi-patch domains. For more details about the isogeometric formulation of the two model problems, which is done via the weak 
formulation and Galerkin discretisation, we refer to \cite[Section 4.2]{KaplVitrih2016} for the case of $L^2$ approximation and to \cite[Section 4.3]{KaplVitrih2016} for 
the case of solving the triharmonic equation. 

In the first example we numerically verify the optimal approximation order of the space of $C^{2}$-smooth geometrically continuous isogeometric functions by means of 
$L^{2}$ approximation.
\begin{ex} \label{ex:fitting}
We consider the two bilinear multi-patch domains with extraordinary vertices given in Fig.\ref{fig:domains} (first column). One domain consists of 
three patches with an extraordinary vertex of valency three and the other one consists of five patches with an extraordinary vertex of valency five. We construct 
nested space $V_{h}$ for $L=0,1,\ldots, 4$. The corresponding bases are generated as described in Section~\ref{subsec:basis} for $L>1$ when $d=5$ and $L>0$ when $d=6$. 
For the other cases the basis construction slightly differs. We generate a minimal determining set for the merged space of the edge and vertex space, since the splitting into the two spaces (as presented in Section~\ref{sec:decompsition}) does not work for these cases. The number of resulting $C^{2}$-smooth isogeometric functions and 
the splitting into the number of functions of the patch space, edge space and vertex space are presented in Table~\ref{tab:fitting}. On both domains we approximate the function
\begin{equation} \label{eq:approx_func}
z(x_1,x_2) = 2 \cos (2 x_{1}) \sin (2 x_{2}), 
\end{equation}
see Fig.~\ref{fig:domains} (second column). Fig.~\ref{fig:convergenceL2appr} (top row) presents the resulting relative $H^0$-errors (i.e., $L^2$-errors) and the estimated convergence rates. The results indicate an optimal approximation order of $\mathcal{O}(h^{d+1})$. The estimated growth of the condition numbers of the diagonally scaled mass matrices (cf. \cite{Br95}), shown in  Fig~\ref{fig:convergenceL2appr} (bottom row), numerically confirms that our constructed bases are well conditioned.

\begin{table}
\centering\scriptsize
  \begin{tabular}{|c||c|c|c|c||c|c|c|c|} \hline
   & \# fcts & \# p.-fcts & \# e.-fcts & \# v.-fcts  & \# fcts & \# p.-fcts & \# e.-fcts & \# v.-fcts \\ \hline 
  & \multicolumn{4}{|c||}{$d=5$} & \multicolumn{4}{|c|}{$d=6$} \\ \hline \hline 
  $L$ & \multicolumn{8}{|c|}{Three-patch domain} \\ \hline
  0 & 52  & 27 & \multicolumn{2}{|c||}{25} & 82 & 48 & \multicolumn{2}{|c|}{34} \\ \hline
  1 & 145 & 108 & \multicolumn{2}{|c||}{37} & 247 & 192 & 9 & 46 \\ \hline
  2 & 493 & 432 & 15 & 46 & 865 & 768 & 51 & 46 \\ \hline
  3 & 1837 & 1728 & 63 & 46 & 3253 & 3072 & 135 & 46 \\ \hline
  4 & 7117 & 6912 & 159 & 46 & 12637 & 12288 & 303 & 46 \\ \hline \hline
$L$ & \multicolumn{8}{|c|}{Five-patch domain} \\ \hline
  0 & 81 & 45 & \multicolumn{2}{|c||}{36} & 131 & 80 & \multicolumn{2}{|c|}{51} \\ \hline
  1 & 236 & 180 & \multicolumn{2}{|c||}{56}  & 406 & 320 & 15 & 71 \\ \hline
  2 & 816 & 720 & 25 & 71 & 1436 & 1280 & 85 & 71 \\ \hline
  3 & 3056 & 2880 & 105 & 71 & 5416 & 5120 & 225 & 71 \\ \hline
  4 & 11856 & 11520 & 265 & 71 & 21056 & 20480 & 505 & 71 \\ \hline
  \end{tabular}
   \caption{The number of resulting $C^{2}$-smooth geometrically continuous isogeometric functions (\# fcts) divided into the number of functions of the patch space (\# p.-fcts), edge space (\# e.-fcts) and vertex space (\# v.-fcts) for the spaces $V_{h}$ with $L=0,1,\ldots,4$. 
   Note that for $L=0,1$ when $d=5$ and for $L=0$ when $d=6$ we cannot split the edge and vertex space as described in Section~\ref{sec:decompsition}.}
  \label{tab:fitting}
\end{table}

\begin{figure}[tbp]
\centering\footnotesize
\begin{tabular}{cccc}
Computational domain & $L^2$ approximation & Triharmonic equation  \\[0.2cm]
\multicolumn{3}{l}{Three-patch domain} \\
\includegraphics[width=4.5cm,clip]{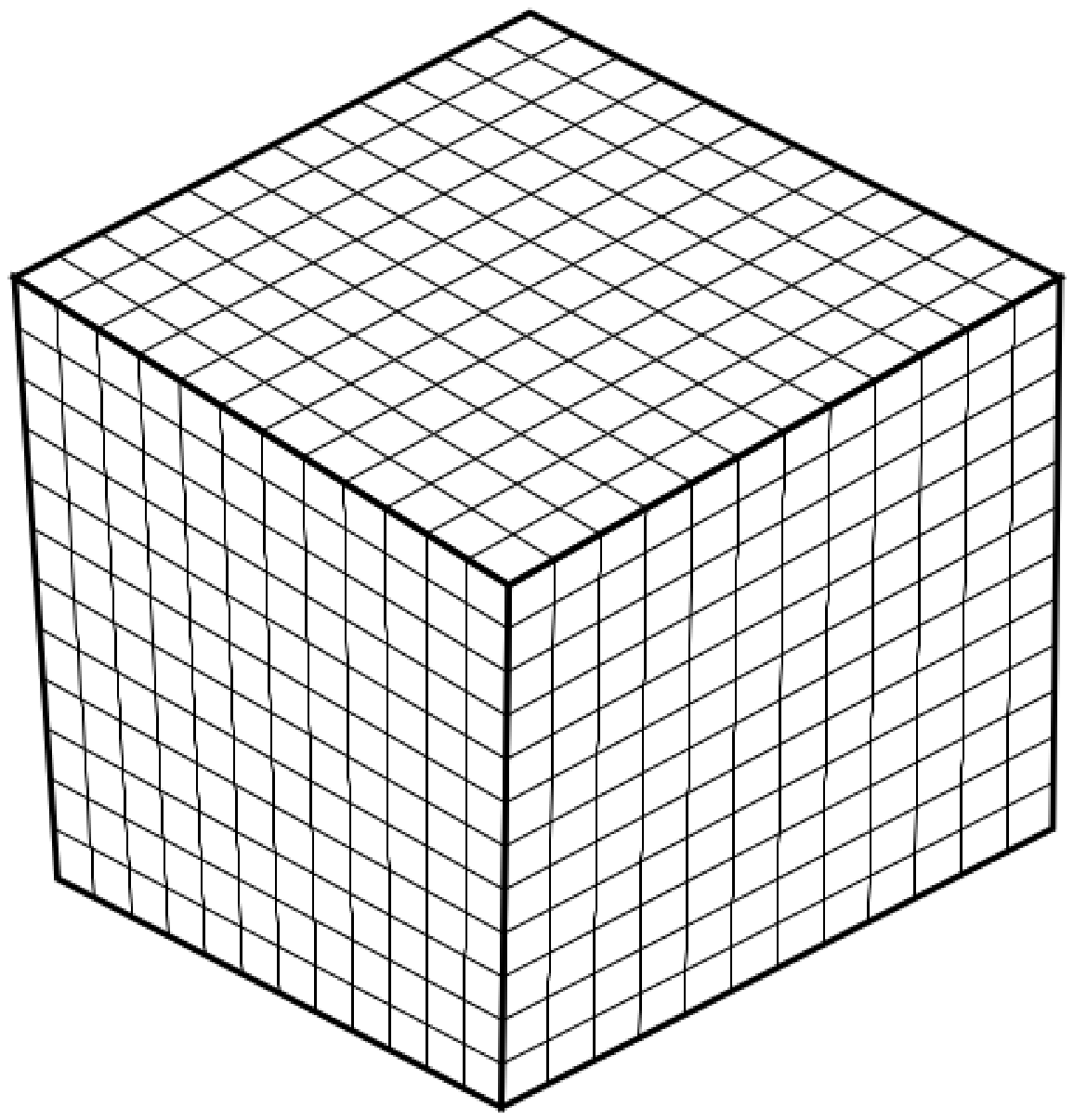}  &
\includegraphics[width=4.5cm,clip]{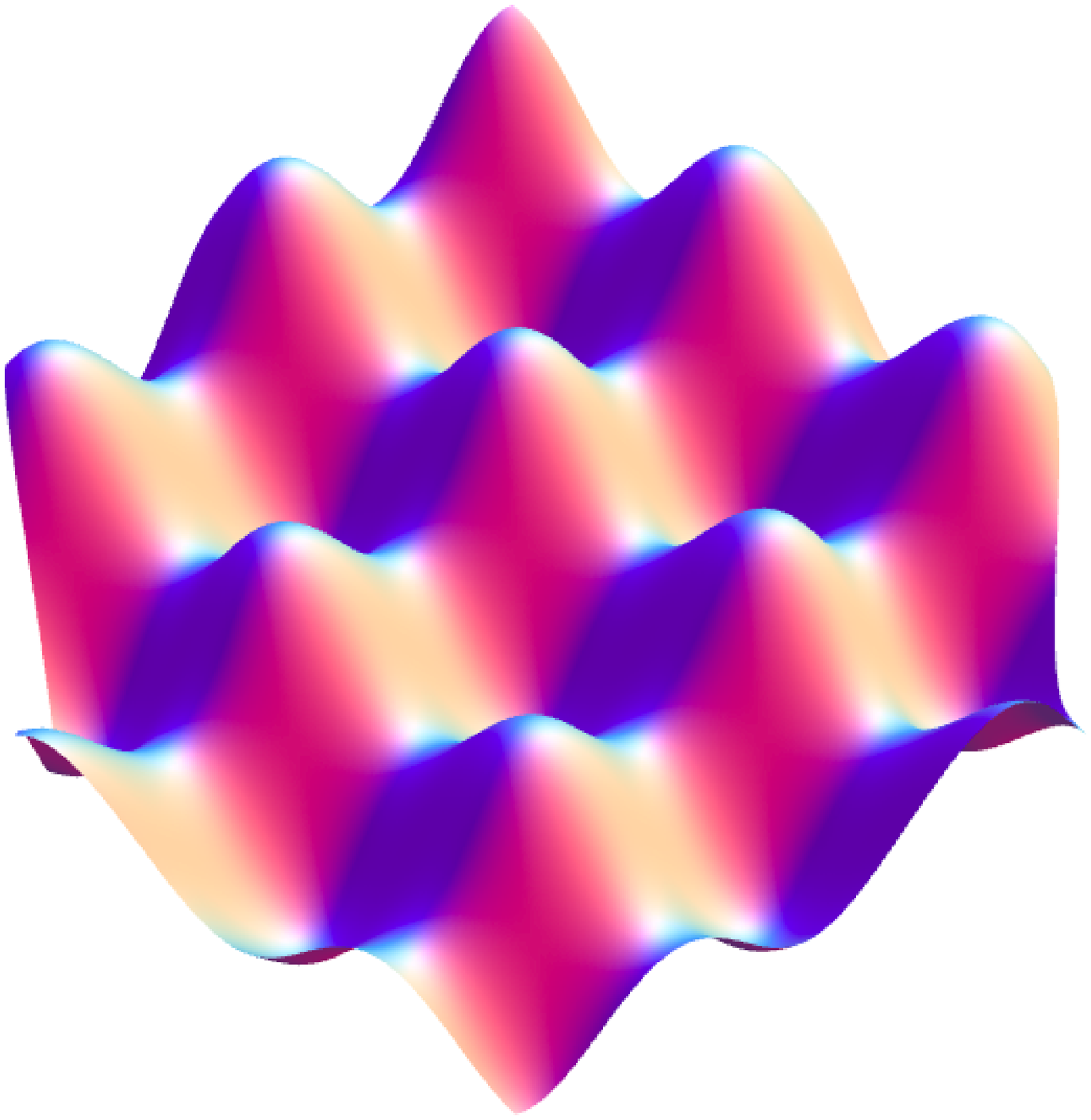} &
\includegraphics[width=4.5cm,clip]{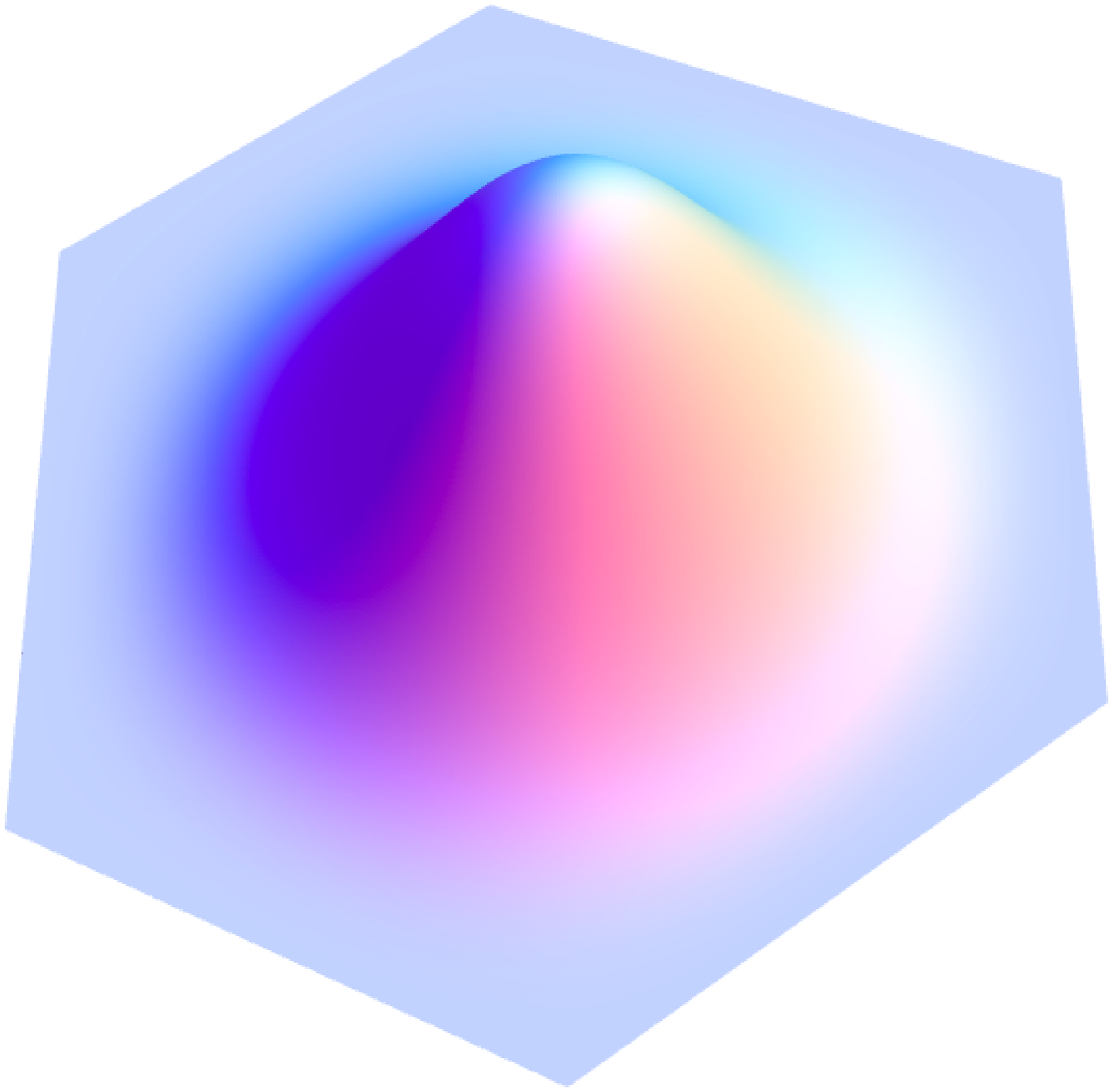} \\
\multicolumn{3}{l}{Five-patch domain} \\
\includegraphics[width=4.5cm,clip]{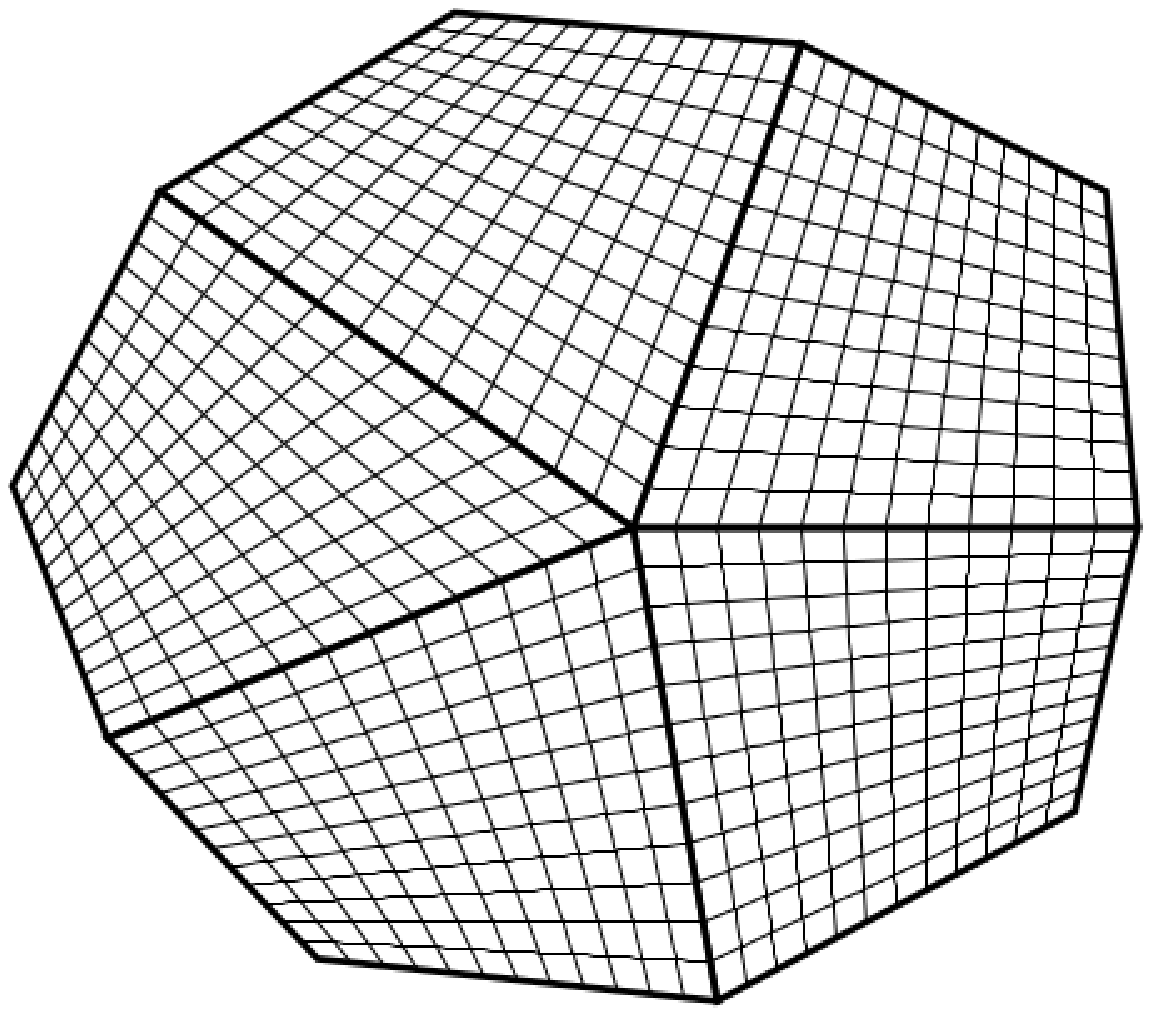}  &
\includegraphics[width=4.5cm,clip]{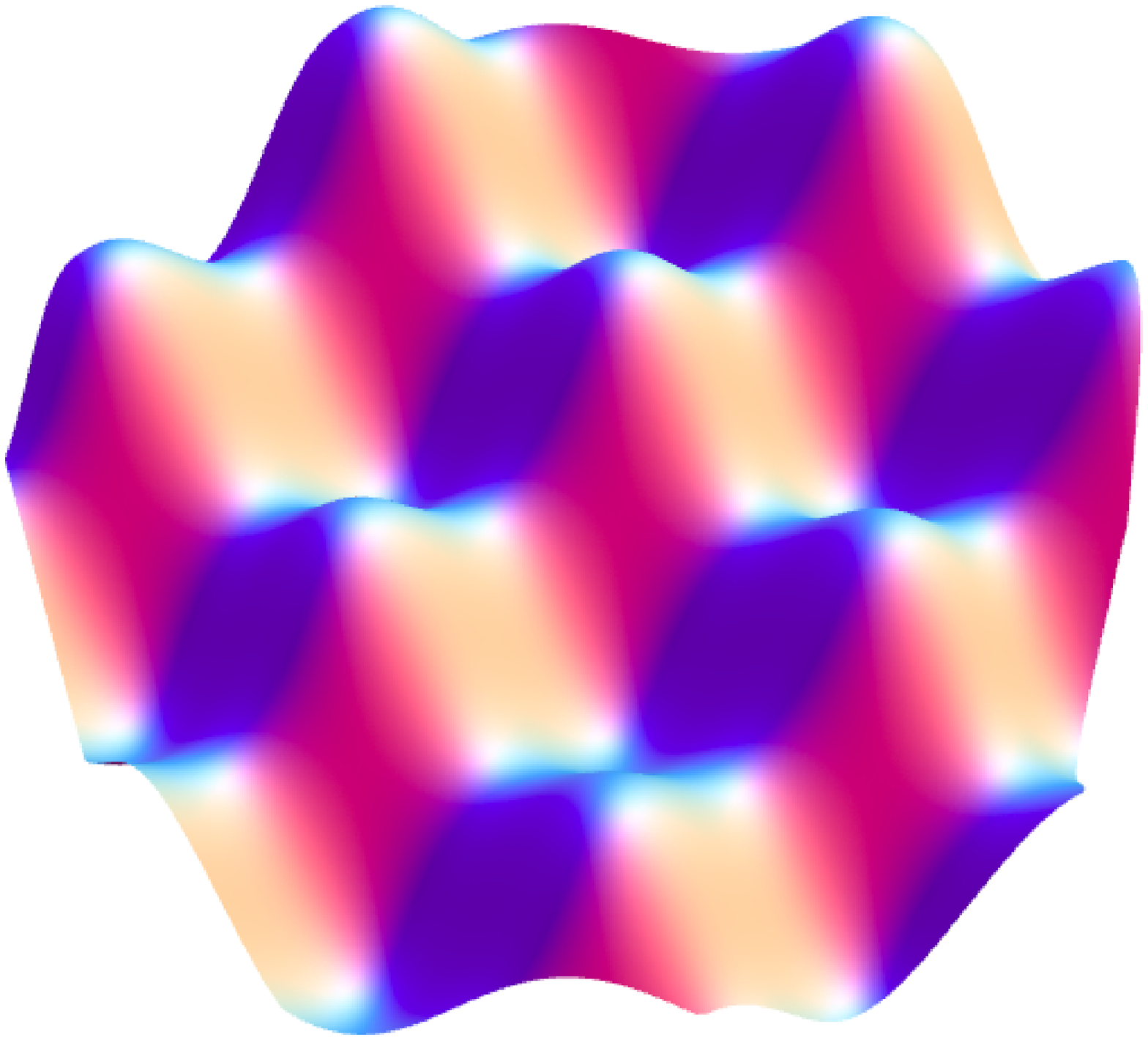} &
\includegraphics[width=4.4cm,clip]{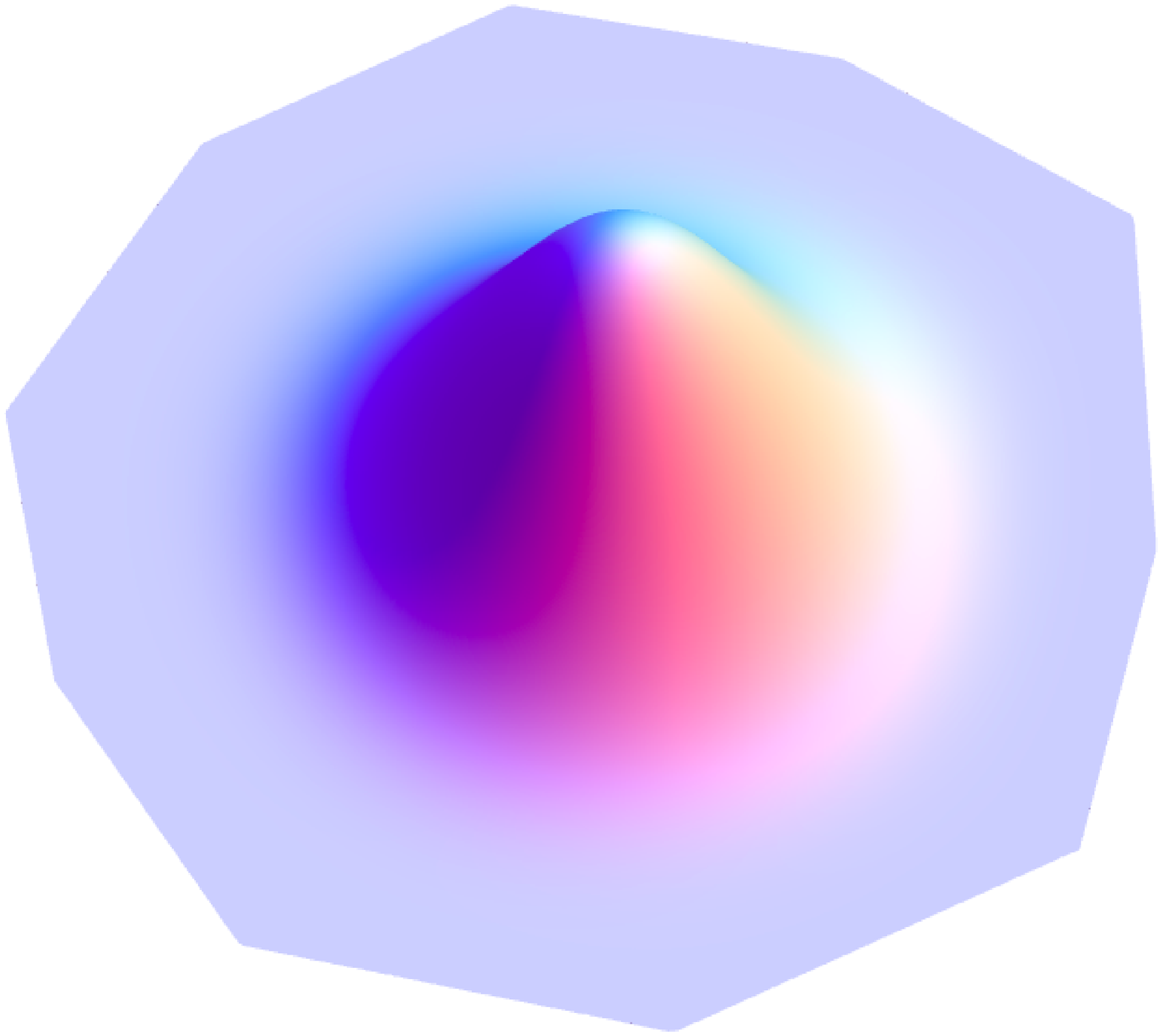}
\end{tabular}
\caption{The two bilinear multi-patch domains (first column) on which the function $z$ (second column), given in \eqref{eq:approx_func}, is approximated, and on 
which the triharmonic equation with the right side functions~$f$ obtained from the exact solutions $u$ (third column), given in \eqref{eq:rightside_three} and 
\eqref{eq:rightside_five}, is solved. See Example~\ref{ex:fitting} and Example~\ref{ex:triharmonic}.}
\label{fig:domains}
\end{figure}

 \begin{figure}[htb]
  \centering\footnotesize
  \begin{tabular}{cc}
  $d=5$ & $d=6$ \\[0.1cm]
  \multicolumn{2}{c}{\includegraphics[width=7.9cm,clip]{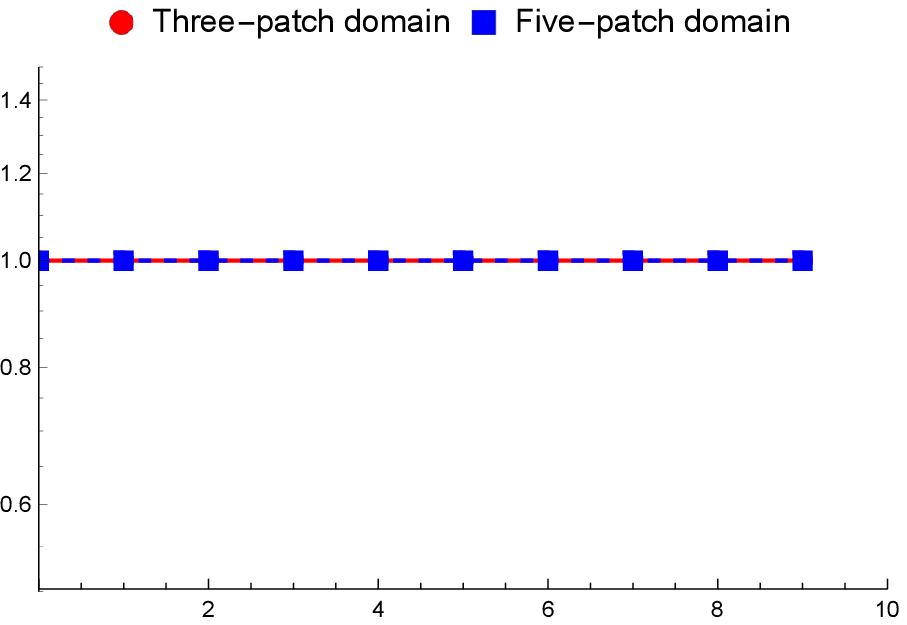}}\\[0.1cm]
   \includegraphics[width=7.2cm,clip]{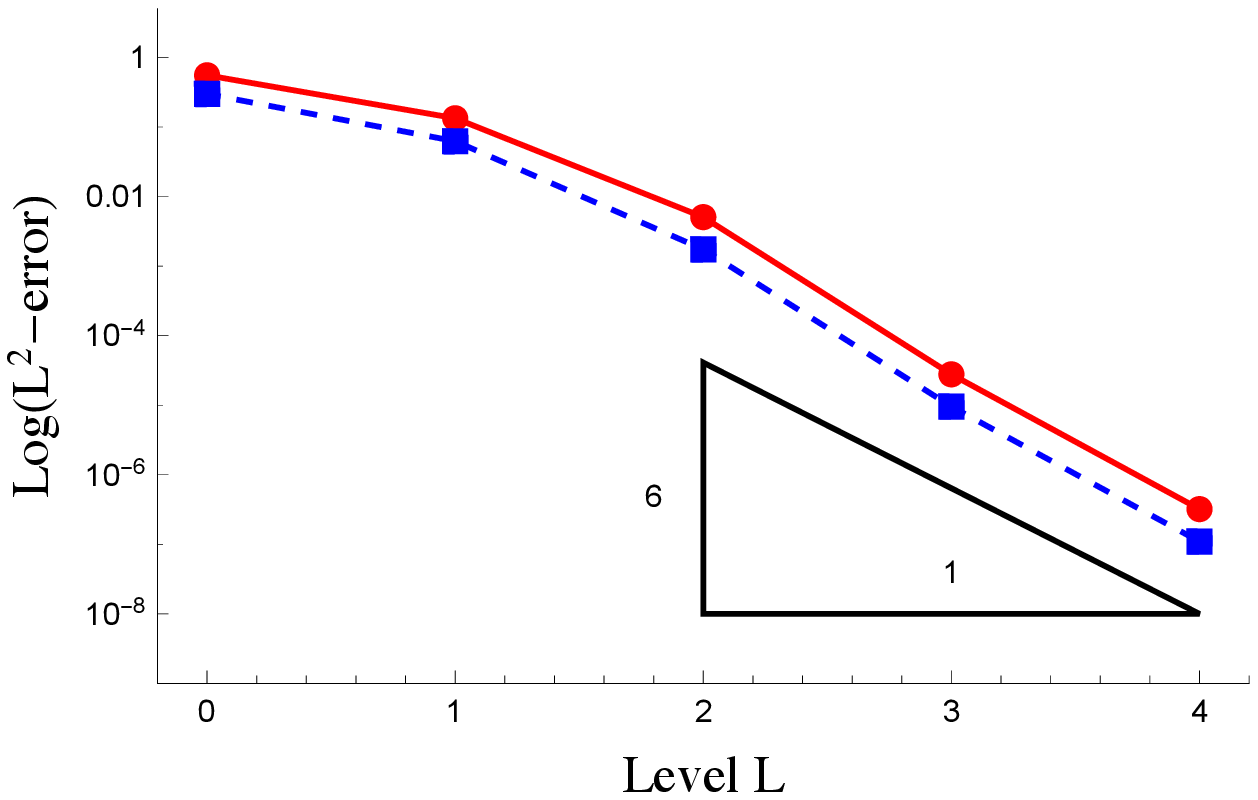}  & 
   \includegraphics[width=7.2cm,clip]{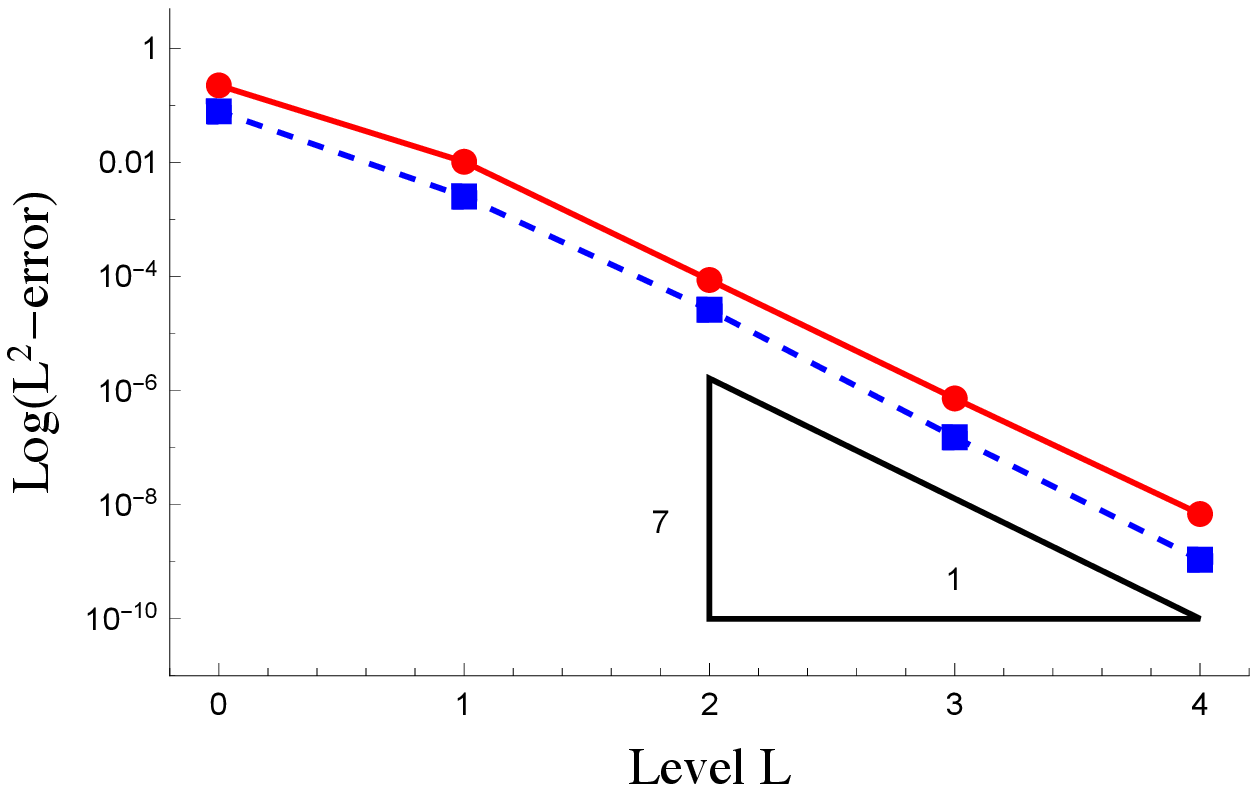}  \\[0.1cm]
   \includegraphics[width=7.2cm,clip]{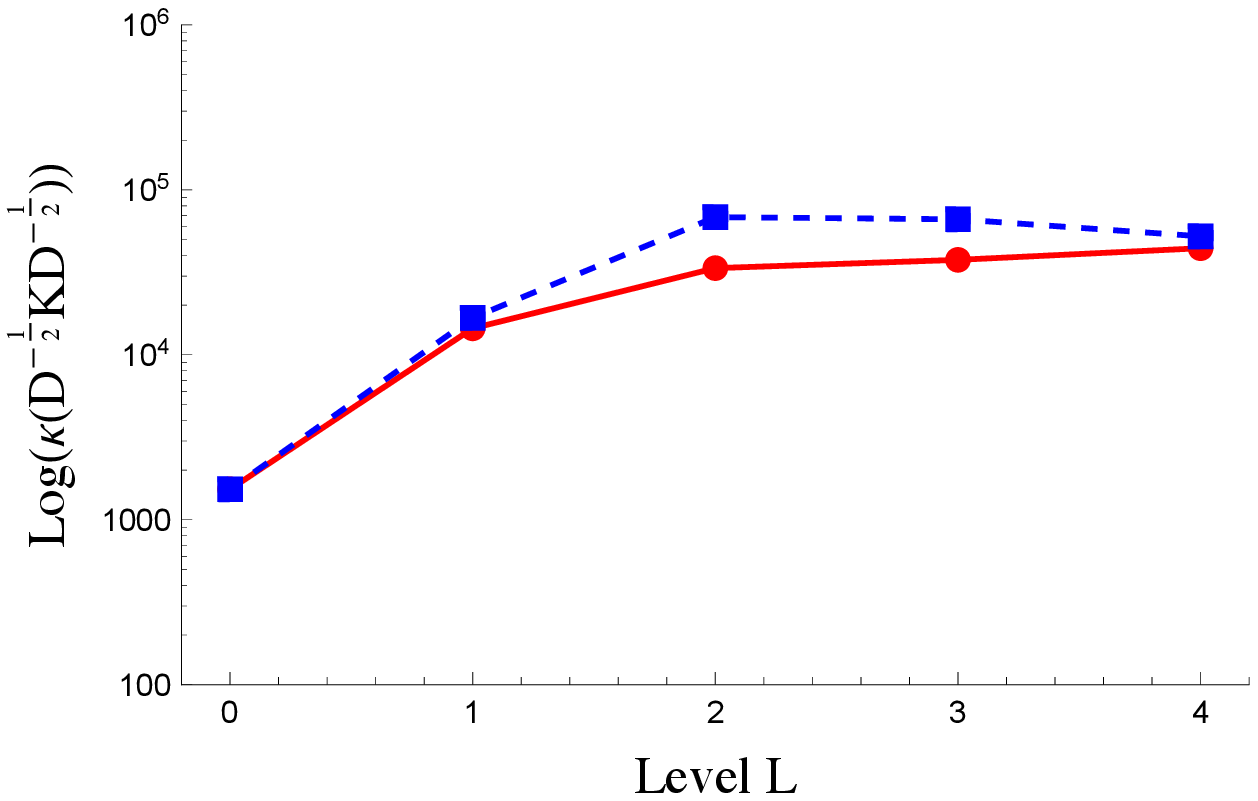}  & 
   \includegraphics[width=7.2cm,clip]{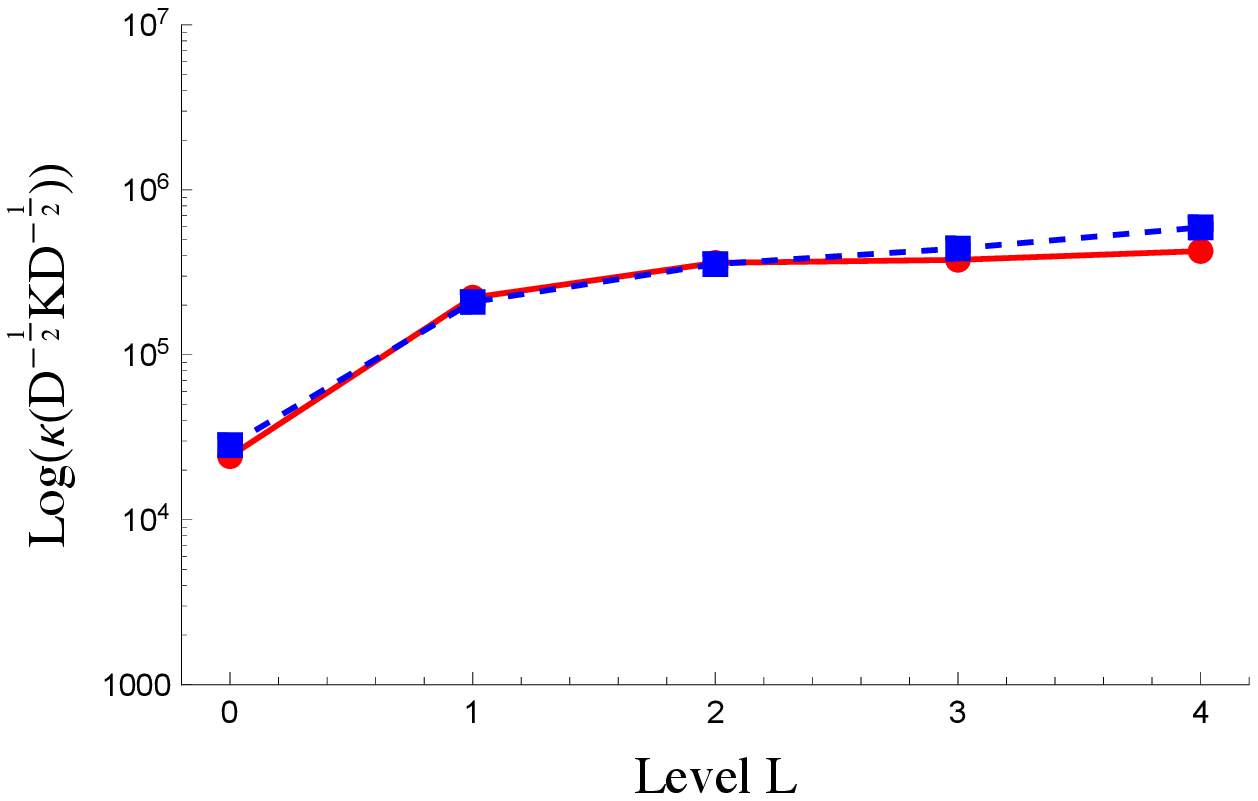}
  \end{tabular}
  \caption{The resulting relative $H^{0}$-errors with the corresponding estimated convergence rates (top row) and the estimated growth of the condition numbers (bottom row) by performing $L^2$-approximation on the two multi-patch domains shown in 
  Fig.~\ref{fig:domains} (first column). 
}
  \label{fig:convergenceL2appr}
  \end{figure}

\end{ex}

In the second example we demonstrate the potential of the globally $C^{2}$-smooth functions by solving a first example of a $6$-th order partial differential equation, 
given by the triharmonic equation, over multi-patch domains with extraordinary vertices.
\begin{ex} \label{ex:triharmonic}
We consider again the two bilinear multi-patch domains shown in Fig.~\ref{fig:domains} (first column) and construct for these domains the spaces 
$V_{2,0h}$ for $L=3$ as presented in Section~\ref{subsec:basis}. We use the constructed spaces to numerically solve the triharmonic equations with right side 
functions $f$ obtained from exact solutions fulfilling homogeneous boundary conditions of order~$2$, see Fig.~\ref{fig:domains} (third column). In case of the 
three-patch domain the considered exact solution is given by
\begin{align}
 \textstyle u(x_{1},x_{2}) = &\left(\frac{1}{2000000} \left(\frac{31}{8} - \frac{x_1}{2} - x_2 \right) \left(\frac{5507}{1440} + \frac{37 x_1}{72} - x_2\right) \left( \frac{477}{10} + 
    14 x_1 + x_2\right) \right. \nonumber \\[-0.2cm]
   &  \label{eq:rightside_three}  \\[-0.2cm]  
    & \left. \left(\frac{601}{155} + \frac{16 x_1}{31} + x_2 \right) \left(-\frac{1147}{308} + 
    \frac{39 x_1}{77} - 
    x_2 \right) \left(-\frac{1147}{4} + 77 x_1 - x_2\right) \right)^3 , \nonumber
\end{align}
and in case of the five-patch domain the considered exact solution is given by
\begin{align}
 \textstyle u(x_{1},x_{2}) = &  \left( \frac{1}{50000000} \left(\frac{512}{15} - 
\frac{32 x_1}{3} - x_2\right) \left(\frac{1558}{435} - \frac{14 x_1}{29} - x_2\right)
\left(\frac{174}{55} - \frac{x_1}{11} - x_2\right)  \right.\nonumber \\[0.1cm]
& \hspace{-1.2cm} \left(\frac{1661}{420} + 
\frac{17 x_1}{28} - x_2\right) \left(\frac{122}{15} + 2 x_1 -  x_2\right) \left(\frac{92}{9} + 
\frac{8 x_1}{3} + x_2\right)  \left(\frac{29}{6} + \frac{21 x_1}{20} + x_2 \right) \label{eq:rightside_five} \\[0.1cm]
& \hspace{-1.2cm}  \left. \left(\frac{839}{285} + \frac{2 x_1}{19} + x_2\right) 
\left(-\frac{279}{85} + \frac{9 x_1}{17} - x_2\right) 
\left(-\frac{72}{5} + \frac{9 x_1}{2} - x_2\right) \right)^3 . \nonumber
\end{align}
In Table~\ref{tab:triharmonic}  the number of resulting $C^{2}$-smooth functions and the splitting into the number of functions of the patch space, edge space and 
vertex space is presented. Furthermore, the table shows the resulting relative $H^{i}$-errors for $i = 0, 1,2,3$. 

\begin{table}
\centering\scriptsize
  \begin{tabular}{|c|c|c|c|c|c|c|c|c|} \hline
  $d$ & \# fcts & \# p.-fcts & \# e.-fcts & \# v.-fcts & $\s \frac{||u-u_{h}||_{0}}{||u||_{0}}$ & $\s \frac{||u-u_{h}||_{1}}{||u||_{1}}$ & $\s \frac{||u-u_{h}||_{2}}{||u||_{2}}$ & $\s \frac{||u-u_{h}||_{3}}{||u||_{3}}$ \\ \hline \hline
  \multicolumn{9}{|c|}{Three-patch domain} \\ \hline
  5 & 1399 & 1336 & 63  & 13 & 3.21e-5 & 4.3e-5 & 2.01e-4 & 2.39e-3\\ \hline
  6 & 2671 & 2523 & 135 & 13 & 9.01e-7 & 1.77e-6 & 1.45e-5 & 2.23e-4 \\ \hline \hline
  \multicolumn{9}{|c|}{Five-patch domain} \\ \hline
  5 & 2326 & 2205 & 121 & 16 & 4.33e-5 & 4.6e-5  & 2.43e-4 & 2.47e-3\\ \hline
  6 & 4446 & 4205 & 225 & 16 & 4.13e-7 & 9.34e-7 & 1.08e-5 & 1.71e-4\\ \hline 
  \end{tabular}
   \caption{Solving the triharmonic equation on the two multi-patch domains shown in Fig.~\ref{fig:domains} (first column) using the space $V_{2,0h}$ for $L=3$. 
   The number of resulting $C^{2}$-smooth geometrically continuous isogeometric functions (\# fcts) divided into the number of functions of the patch space (\# p.-fcts), 
   edge space (\# e.-fcts) and vertex space (\# v.-fcts), and the resulting relative $H^i$-errors, $i = 0,1,2,3$. %See Example~\ref{ex:triharmonic}.
    }
  \label{tab:triharmonic}
\end{table}

\end{ex}

\section{Conclusion}

We studied the space $V^{(k)}$ of biquintic or bisixtic $C^{2}$-smooth geometrically continuous isogeometric functions on a given bilinearly parameterized multi-patch domain~$\Omega \subset \R^2$. To investigate the dimension of $V^{(k)}$, we decomposed the 
space $V^{(k)}$ into the direct sum of three subspaces, which are called patch space, edge space and vertex space. This decomposition was selected since the computation of the dimension of the patch space is simple and the computation of the dimension of the edge space is based on the work for the two-patch case in \cite{KaplVitrih2016}. In order to find the 
dimension of the vertex space we had to distinguish between several cases depending on the 
type and valency of the single vertices.  

In addition, we presented first examples to use the space $V^{(k)}$ of globally 
$C^{2}$-smooth functions to solve a 6th-order partial differential equation, the so-called triharmonic equation, on multi-patch domains with extraordinary vertices by means of isogeometric discretisation. To obtain suitable test functions, we also described a method to generate a basis of the space $V^{(k)}$, which is based on the concept of finding a minimal determining set for the spline coefficients of the isogeometric functions. Moreover, we used the constructed basis functions to perform $L^{2}$ approximation on different bilinearly parameterized multi-patch domains. The obtained numerical results indicate optimal approximation order of the space $V^{(k)}$ and confirm that the generated bases are well conditioned.

The paper leaves several open issues which are worth to study. A first possible topic for future research is the theoretical investigation of the approximation power 
of the space of $C^{2}$-smooth geometrically continuous isogeometric functions. Another one is the implementation of further applications, e.g. the Phase-field crystal 
equation (cf. \cite{Gomez2012}), which require functions of such high smoothness. For both tasks local basis functions with small compact supports and explicit 
representations could be advantageous, which is in general not satisfied in our case. Our basis functions of the edge and vertex space possess a support over the whole 
part of one or even more common interfaces and are implicitly defined by means of a minimal determining set. Therefore, the finding of well-conditioned basis functions 
with the desired properties could be an important task for possible future research. 
Further interesting topics are e.g. the extension of our approach to non-bilinear multi-patch domains and to volumetric domains.  

\paragraph*{\bf Acknowledgement}
M.~Kapl was partially supported by the European Research Council through the FP7 ERC Consolidator Grant n.616563 HIGEOM, and by the 
Italian MIUR through the PRIN “Metodologie innovative nella modellistica differenziale numerica”. 
V.~Vitrih was partially supported by the Slovenian Research Agency (research program P1-0285).
These supports are gratefully acknowledged.

\appendix

\section{Proof of Lemma~\ref{thm:type2}}

We prove Lemma~\ref{thm:type2} for an inner vertex~$\ab{v}^{(0)}$ of valency $\nu \in \{4,5\}$. Let us first consider the case $\nu =4$.
Then $\Omega^{(2,3)}$ is the other two-patch domain for which the vertices $\bfm{v}_{\Omega^{(2, 3)}}^{(0)}, \, \bfm{v}_{\Omega^{(2, 3)}}^{(2)}$ and 
$\bfm{v}_{\Omega^{(2, 3)}}^{(4)}$ are collinear (see Fig.~\ref{fig:examples_typesVal4}). Otherwise, $\bfm{v}^{(0)}$ becomes a type $4$ vertex.
Therefore, we have $\psi_{2,4} = 0$.
The set $\bfm{e}^{\Xi^{(0)}}$ is of the form 
\[
\bfm{e}^{\Xi^{(0)}} = \{\tilde{e}^{(1)}, \bar{e}^{(1)}, \hat{e}^{(1)}, e^{(2)}, \tilde{e}^{(3)}, \bar{e}^{(3)}, \hat{e}^{(3)}, e^{(4)}\}. 
\]
By considering the supports of all equations in $\bfm{e}^{\Xi^{(0)}}$, we can conclude that $\hat{e}^{(1)}$ and $\hat{e}^{(3)}$ are linearly 
independent and linearly independent with all the other equations in $\bfm{e}^{\Xi^{(0)}}$. Moreover, $\tilde{e}^{(1)}, \bar{e}^{(1)},  e^{(2)}, 
\tilde{e}^{(3)}, \bar{e}^{(3)}$ are clearly linearly independent among them. 
To prove formula \eqref{eq:type2generalnu} we have to show that $e^{(4)}$ is a linear combination of $\tilde{e}^{(1)}, \bar{e}^{(1)}$,  $e^{(2)},  \tilde{e}^{(3)}, \bar{e}^{(3)}$. 
Let first $p_2 \neq 0$. Since $\bfm{v}^{(0)}, \bfm{v}^{(2)}$ and $\bfm{v}^{(4)}$ are collinear also $p_4 \neq 0$. Then
  $$
   e^{(4)} =\frac{2 p_4}{p_2} \psi_{3,4}^2 \tilde{e}^{(1)} + \frac{p_4}{p_2} \frac{\psi_{1,3}\,\psi_{3,4}}{\psi_{1,2}^2} \bar{e}^{(1)} + \left(\frac{p_4}{p_2}\right)^3 \frac{\psi_{1,4}}{\psi_{2,3}} e^{(2)} + 2 \left(\frac{p_4}{p_2}\right)^2 \frac{\psi_{1,2}^3}{\psi_{3,2}} \tilde{e}^{(3)} + \frac{p_4}{p_2} \frac{\psi_{1,2}^2 \psi_{1,3}}{\psi_{2,3}^3} \bar{e}^{(3)}.  
  $$
  Let now $p_2=0$. Then also $p_4=0$ and $q_2 \neq 0, q_4 \neq 0, p_3 \neq 0, p_1\neq 0$. We then obtain 
  $$
  e^{(4)} =\frac{2\, p_3^2 q_4^3}{q_2} \,\tilde{e}^{(1)} + \frac{p_3 q_4^2}{p_1^2 q_2^3} \,\psi_{1,3}\, \bar{e}^{(1)} 
  - \frac{p_1 q_4^4}{p_3 q_2^4}\, e^{(2)} + \frac{2 \,p_1 q_4^3}{p_3 q_2} \, \tilde{e}^{(3)} -
   \frac{p_1^2 q_4}{p_3^3 q_2^2} \,\psi_{1,3} \,\bar{e}^{(3)}.
  $$
  
Let us consider now the case $\nu = 5$. Similar to the proof for $\nu>5$, we assume equalities~\eqref{eq:psiForTwoLines} for $\nu =5$.
We have to consider equations 
\[
\bfm{e}^{\Xi^{(0)}}=\{\tilde{e}^{(1)}, \bar{e}^{(1)}, \hat{e}^{(1)}, \tilde{e}^{(2)}, \bar{e}^{(2)}, \hat{e}^{(2)}, e^{(3)}, e^{(4)}, e^{(5)}\},
 \]
among which $\hat{e}^{(1)}$ and $\hat{e}^{(2)}$ are clearly linearly independent with all other equations.
Matrix $A_\nu^{(2)}$, which is used in the proof of Lemma~\ref{thm:type2} for $\nu > 5$, is now $10\times 7$ matrix with columns corresponding to 
$\bfm{e}^{\Xi^{(0)}} \backslash \{\hat{e}^{(1)},\hat{e}^{(2)}\}$. 
We have to prove that
\begin{equation}  \label{eq:dependencyType2Val5}
e^{(5)} = \alpha_1 \,\tilde{e}^{(1)}  + \alpha_2 \,\bar{e}^{(1)} + \alpha_3 \,\tilde{e}^{(2)}  + \alpha_4 \,\bar{e}^{(2)} +\alpha_5\, e^{(3)}  + \alpha_6 \, e^{(4)}. 
\end{equation}
Recall \eqref{eq:Djnu}. By Lemma~\ref{lemma:Dj} and relations \eqref{eq:psiForTwoLines} rows $7,8$ and $9,10$ are pairwise linearly dependent, 
since $\det D_3^5$ involves the term $\psi_{2,5}$ and $\det D_{4,5}$ involves the term $\psi_{1,3}$.
Every row in $A_\nu^{(2)}$ contains precisely two nonzero elements, thus we can proceed as following.
From row 10 (or $9$) and from row $8$ (or $7$) we get 
$$
\alpha_6 = \frac{-\psi_{5,1}^3}{\psi_{3,4}\, \psi_{4,5}\, \psi_{5,3}} \quad {\rm and} \quad 
\alpha_5 = \frac{-\psi_{5,1}^3 \, \psi_{4,5}^2}{\psi_{2,3} \, \psi_{3,4}^2\, \psi_{4,2}\, \psi_{5,3}} .
$$
Then rows $6$ and $5$ give 
$$
\alpha_4 = \frac{\psi_{5,1}^3 \, \psi_{4,5}^2}{\psi_{1,2}^2 \, \psi_{2,3}^2\, \psi_{3,5}} \quad {\rm and} \quad 
\alpha_3 = \frac{2 \, \psi_{5,1}^3 \, \psi_{4,5}^2 \, \psi_{3,4}}{\psi_{1,2}\, \psi_{2,3} \, \psi_{4,2}\, \psi_{5,3}},
$$ 
respectively. Now we can express $\alpha_1$ and $\alpha_2$ both in two ways. From rows $4$ and $1$ we get
$$
   \alpha_2 =  \frac{\psi_{5,1} \, \psi_{4,5}^2 \, \psi_{3,4}}{\psi_{1,2}^2 \, \psi_{4,2}\, \psi_{5,3}} \quad {\rm and} \quad \alpha_2= - \frac{ \psi_{1,4}\, \psi_{4,5}\, \psi_{5,1}}{\psi_{1,2}^3}.
$$
Similarly from rows $3$ and $2$ we obtain
$$
   \alpha_1 =  - 2\frac{\psi_{5,1}^2 \, \psi_{4,5}^2 \, \psi_{2,3}}{\psi_{1,2}^2 \, \psi_{3,5}} \quad {\rm and} \quad 
   \alpha_1 = - 2\frac{ \psi_{4,5}^2\, \psi_{5,1}}{\psi_{1,2}}.
$$
In order to prove \eqref{eq:dependencyType2Val5} both expressions for $\alpha_1$ and $\alpha_2$ have to coincide, respectively. This is fulfilled if and only if
\begin{equation} \label{eq:equalities_alpha1_alpha2}
   \frac{\psi_{5,1} \, \psi_{2,3}}{\psi_{1,2} \, \psi_{3,5}} = 1, \quad
   - \frac{\psi_{1,2}\, \psi_{3,4}\, \psi_{4,5}}{\psi_{1,4}\, \psi_{4,2}\, \psi_{5,3}} =1.
\end{equation}
Both equalities in \eqref{eq:equalities_alpha1_alpha2} follow by a direct computation using \eqref{eq:psiForTwoLines} for $\nu=5$.

\section{Proof of Lemma~\ref{lem:type4}}

Relations \eqref{eq:val4type4_1_2} directly follow by comparing the corresponding four columns in $A_4^{(4)}$. Since 
$\displaystyle \frac{\psi_{1,4}\, \psi_{2,3} }{\psi_{4,3}\, \psi_{1,2}} = 1$, we get
\begin{align*}
  & \alpha_3 = \frac{2 \psi_{4,1}^3}{\psi_{2,3}}, \quad \alpha_2 = \frac{\psi_{3,4}\, \psi_{4,1}^3}{\psi_{2,3}^2\, \psi_{2,1}^2}, \quad 
  \alpha_1 =  \frac{2\, \psi_{2,3}\, \psi_{4,3}\, \psi_{4,1}^2}{\psi_{2,1}^2} = \frac{2\, \psi_{4,3}^2\, \psi_{1,4}}{\psi_{1,2}}, \\
  & \beta_3 = \frac{\psi_{4,1}}{2\,\psi_{4,3}\, \psi_{3,2}^2}, \quad \beta_2 = \frac{\psi_{3,4}^2\, \psi_{4,1}}{\psi_{1,2}\,\psi_{3,2}^2}, \quad
  \beta_1 = - \frac{\psi_{3,4}^2}{2\, \psi_{2,3}\, \psi_{1,4} \psi_{1,2}^2} = \frac{\psi_{3,4}}{2\, \psi_{1,2}^3}.
\end{align*}

Recall \eqref{eq:shape_points} and \eqref{eq:boundaryVertexVal2}. 
%To compute \textcolor{red}{particular} coefficients $\gamma_i$, $i=1,2,\ldots,9$, in 
In order to prove \eqref{eq:val4type4_3}, we can rotate and scale the domain $\Omega_{\bfm{v}^{(0)}}$ in order to get the following simplification 
$$
q_2 =0, \;  q_4 = 0, \; p_4 = 1.
$$ 
Then clearly $p_2 \neq 0$, $q_1 \neq 0$ and $q_3 \neq 0$.
Now we have to consider two cases regarding $p_1$. Let first $p_1\neq 0$, which implies $p_3 \neq 0$. Then

\begin{align*}
 &\gamma_1 = \frac{2 q_3 \left(p_1 \left(2 \tilde{q}_4-3 q_3\right)+p_3 \tilde{q}_1\right)}{p_1 p_2}, \\
 &\gamma_2 = \frac{p_3 \left(p_3^2 \left(2 p_2 \tilde{q}_1+3 \tilde{q}_2\right)+p_3 \left(p_2 \left(\left(5-2
   \tilde{p}_1\right) q_3-2 p_1 \tilde{q}_4\right)-3 \tilde{p}_2 q_3\right)+2 p_1 p_2 \left(\tilde{p}_4-1\right) q_3\right)}{p_1^2 p_2^4 q_3}, \\
    & \gamma_3 = \frac{p_3^2}{p_1^2 p_2^3}, \quad
  \gamma_4 = \frac{4 p_1 q_3 \left(p_3^2
   \left(-\tilde{q}_2\right)+p_3 \left(\left(\tilde{p}_2-p_2\right) q_3+p_1 p_2 \tilde{q}_4\right)-p_1 p_2 \left(\tilde{p}_4-1\right) q_3\right)}{p_2^4 p_3^2}, \\
     & \gamma_5 = \frac{p_1 \left(5 \left(p_2-1\right) q_3-2 p_2 \tilde{q}_4+3 \tilde{q}_3\right)+p_3 \left(2 \tilde{q}_2-3 p_2 \tilde{q}_1\right)}{p_2^5 p_3 q_3}, \quad 
   \gamma_6 =  -\frac{p_1}{p_2^5 p_3}, \\
      & \gamma_7 = \frac{2 p_1^2 q_3 \left(p_1 \left(\left(2-5 p_2\right) q_3+2 p_2 \tilde{q}_4-2 \tilde{q}_3\right)+3 p_2 p_3 \tilde{q}_1\right)}{p_2^2 p_3^3}, \\
         & \gamma_8 = \frac{p_1^3 \left(p_3 \left(p_2 \tilde{q}_4+2 \tilde{q}_3\right)-\left(2 \tilde{p}_3+p_2 \left(\tilde{p}_4-3\right)\right) q_3\right)}{p_2^3 p_3^3 q_3}, \quad
   \gamma_9 = \frac{p_1^3}{p_2^2 p_3^3}.
\end{align*}
Let now $p_1 = 0$, which implies $p_3 = 0$. Then 
\begin{align*}
 & \gamma_1 = \frac{2 q_3 \left(q_1 \left(2 \tilde{q}_4-3 q_3\right)+q_3 \tilde{q}_1\right)}{p_2 q_1}, \quad
 \gamma_2 = \frac{q_3 \left(p_2 \left(2 \left(\tilde{p}_4-1\right) q_1+\left(5-2 \tilde{p}_1\right) q_3\right)-3 \tilde{p}_2
   q_3\right)}{p_2^4 q_1^2}, \\
  & \gamma_3 = \frac{q_3^2}{p_2^3 q_1^2}, \quad 
   \gamma_4 =  \frac{4 q_1 \left(\tilde{p}_2 q_3-p_2 \left(\left(\tilde{p}_4-1\right) q_1+q_3\right)\right)}{p_2^4}, \quad  
   \gamma_6 = -\frac{q_1}{p_2^5 q_3}, \\
  & \gamma_5 = \frac{q_1 \left(5 \left(p_2-1\right) q_3-2 p_2 \tilde{q}_4+3 \tilde{q}_3\right)+q_3 \left(2 \tilde{q}_2-3 p_2 \tilde{q}_1\right)}{p_2^5 q_3^2}, \quad 
   \gamma_9 = \frac{q_1^3}{p_2^2 q_3^3},\\
  & \gamma_7 = \frac{2 q_1^2 \left(q_1 \left(\left(2-5 p_2\right)
   q_3+2 p_2 \tilde{q}_4-2 \tilde{q}_3\right)+3 p_2 q_3 \tilde{q}_1\right)}{p_2^2 q_3^2},
   \gamma_8 = -\frac{\left(2 \tilde{p}_3+p_2 \left(\tilde{p}_4-3\right)\right) q_1^3}{p_2^3 q_3^3}.
\end{align*}

\end{document}